\theoremstyle{plain}
\newtheorem{theorem}{Theorem}[section]
\theoremstyle{plain}
\newcommand{\Z}{\mathbb Z}
\newcommand{\R}{\mathbb R}
\newcommand{\A}{\mathscr A}
\newcommand{\E}{\mathbb E}
\newcommand{\N}{\mathbb N}
\newcommand{\Pcal}{\mathcal P}
\renewcommand{\P}{\mathbb P}
\newcommand{\W}{E_{\rm co}}
\newcommand{\e}{\varepsilon}
\newcommand{\sgn}{\text{\normalfont sgn}}
\newcommand{\C}{\mathcal C}
\newcommand{\F}{\mathcal F}
\newtheorem{lemma}[theorem]{Lemma}
\newtheorem{proposition}[theorem]{Proposition}
\newtheorem{remark}[theorem]{Remark}
\numberwithin{equation}{section}
\theoremstyle{definition}
\newtheorem{definition}[theorem]{Definition}
\newcommand{\inde}{indistinguishable}
\newcommand{\indy}{indistinguishability}
\newcommand{\lora}{\longrightarrow}
\newcommand{\llra}{\longleftrightarrow}
\title{Indistinguishability for recurrent clusters}
\author{Damis El Alami\footnote{Department of Stochastics, Institute of Mathematics, Budapest University of Technology and Economics, M\H{u}egyetem rkp.~3., Budapest 1111 Hungary}\ \footnote{HUN-REN Alfr\'ed R\'enyi Institute of Mathematics, Re\'altanoda u.~13-15, Budapest 1053 Hungary} 
\qquad Gábor Pete\footnotemark[2]\ \footnotemark[1] 
\qquad Ádám Timár \footnotemark[2]\ \footnote{Division of Mathematics, The Science Institute, University of Iceland, Dunhaga 3 IS-107 Reykjavík, Iceland} 
}
\date{\today}
\begin{document}
\maketitle

\begin{abstract}
We introduce a general framework to show the indistinguishability of infinite clusters (ergodicity of the cluster subrelation) in group-invariant percolation processes with a weaker version of the finite energy property: the possibility of moving infinite branches from one infinite cluster to another. Crucially, this removes the necessity for the infinite clusters to be transient, present in most previous works. Our method also applies to more general random graphs, whenever a stationary sequence of vertices is definable.

We use this to show the indistinguishability of infinite clusters (or permutation cycles) in the interchange process (a.k.a.~random stirring process), the loop $O(n)$ model on amenable Cayley graphs, biased corner percolation on $\Z^2$, and the Poisson Zoo process.

Finally, we show that infinite clusters in any invariant process on a Cayley graph are indistinguishable for any ``not essentially tail'' property, i.e., properties that depend only on the local structure of the cluster.
\end{abstract}

\tableofcontents

\section{Introduction and results}

Consider a Cayley graph $G=G(\Gamma,S)$ of a group $\Gamma$ with a finite generating set $S$, and a $\Gamma$-translation-invariant probability measure on subsets of the vertices $V(G)=\Gamma$ or the edges $E(G)$, called a vertex- or edge-percolation on $G$. These are key examples of probability measure preserving (p.m.p.) group actions, fundamental from ergodic theoretical  \cite{ornstein1987entropy,kechris2004topics,bowen2020examples} and probabilistic and statistical physical \cite{benjamini1996percolation, benjamini1999group,LPbook,PGG} points of view, as well.

Two connected components, called clusters, are called {\bf \inde} if, for any $\Gamma$-invariant property for a cluster (e.g., cardinality, or recurrence of simple random walk on the cluster itself, or whether it is almost surely hit by random walk on $G$ started at any vertex), called \textbf{component property}, either both clusters have it, or neither. 
Finite clusters are of course distinguishable by cardinality, but how about infinite clusters? Lyons and Schramm \cite{lyonsIndistinguishabilityPercolationClusters1999}, who introduced this concept, proved for invariant percolations on Cayley graphs, or more generally, unimodular transitive graphs, that {\bf insertion tolerance} (which means, roughly, that one can add vertices or edges to the configuration with a positive conditional probability, given any configuration elsewhere) implies that all infinite clusters are a.s.~\inde. It is important to note here that non-uniqueness of infinite clusters in an insertion tolerant automorphism-invariant percolation is possible only on non-amenable transitive graphs \cite{burtonkeane}, hence this Lyons-Schramm theorem is non-vacuous only in the non-amenable setting.

Indistinguishability is an extremely natural question: it is the {\bf extremality} (ergodicity) of the unimodular random rooted graph \cite{aldousProcessesUnimodularRandom2007} that is the cluster of a fixed vertex as the root, together with its supergraph $G$ (called local unimodularity in \cite{hutchcroft2020non}). In a slightly stronger notion of \indy, we have the supergraph $G$ labelled not only by the cluster of the root, but by the entire percolation configuration so that we can talk about \indy\ of the clusters together with the environment, asking about properties like ``the cluster of the root touches every other cluster at infinitely many places''. (This is the notion we will mean by \indy\ below, defined precisely in Subsection~\ref{ss:defs}). This stronger notion was noticed to be equivalent to the {\bf ergodicity of the cluster equivalence relation} in~\cite{gaboriau2009measurable}, hence it is very natural also in the orbit equivalence world.

Moreover, \indy\ has turned out to have many important applications. Already in \cite{lyonsIndistinguishabilityPercolationClusters1999}, applications to Bernoulli percolation included a new, conceptual, proof of {\bf uniqueness monotonocity} \cite{haggstrom1999percolation} in the case of unimodular transitive graphs: there exist $0<p_c \leq p_u\leq 1$ such that for any $p<p_c$ there are only finite clusters, for $p \in (p_c,p_u)$ there are infinitely many infinite clusters, and for $p>p_u$ there is a unique infinite cluster, almost surely. Some of these phases might be empty in general, but one example corollary in the paper is that Kazhdan (T) groups \cite{glasner1997kazhdan} have non-uniqueness at $p_u$, hence $p_u<1$. Another application is the {\bf continuity} of the percolation probability $\theta(p):=\P_p(o\llra\infty)$ for $p>p_c$, from the argument in~\cite{vdBK84}. 

Later, \indy\ of the trees in the {\bf Free and Wired Uniform Spanning Forests} and the {\bf Free Minimal Spanning Forest} (when it is not equal to the Wired) was proved, see~\cite{hutchcroft2017indistinguishability, timar2006ends, timar2018indistinguishability}. Spanning forests obviously do not possess insertion tolerance. Instead, weaker forms (weak insertion tolerance \cite{timar2018indistinguishability} and update tolerance \cite{hutchcroft2017indistinguishability}) were established and used. These \indy\ results have also been used many times since then, e.g., to understand the {\bf geometry of the USF clusters} in $\Z^d$ \cite{hutchcroft2019component}, and in the positive resolution of the {\bf dynamical von Neumann-Day problem} by Gaboriau-Lyons~\cite{gaboriau2009measurable}, generalized in \cite{gheysens2017fixed}: for the Bernoulli shift $\big( [0,1]^\Gamma,\otimes_\Gamma \mathsf{Leb} \big)$ of any countable non-amenable group $\Gamma$, there is an ergodic free p.m.p.\ action of the free group $\mathbf{F}_2$ such that almost every $\Gamma$-orbit decomposes into $\mathbf{F}_2$-orbits. 

 A further important development was that \cite{chifan2010ergodic}, using von Neumann algebras, proved that, in any {\bf factor of i.i.d.~percolation}, there are at most countably many {\bf non-hyperfinite}  \indy\ classes, where non-hyperfiniteness of a cluster means, roughly, that there exists a $\delta>0$ such that if one further removes less than $\delta$ density of the edges in a unimodular way, then the cluster cannot fall apart into finite components only. They also proved that for non-hyperfinite FIID clusters, indistinguishability, i.e., ergodicity of the cluster relation, is equivalent to strong ergodicity; the corresponding notion of {\bf strong indistinguishability} was defined in \cite{martineau2016ergodicity}. See \cite{qCI} and \cite{Mikolaj} for a recent strengthening of the Chifan-Ioana theorem for exact groups, and its relation to {\it sparse} non-hyperfinite FIID clusters.

All the above results, apart from the case of the Wired Uniform Spanning Forest, only make sense in non-amenable graphs, for non-hyperfinite clusters. In fact, a key step in~\cite{lyonsIndistinguishabilityPercolationClusters1999} involves the {\bf transience} of the infinite clusters, which comes from the fact that they are $\infty$-ended. (The {\bf number of ends} of a graph $H$ is the supremum of the number of connected components of $H\setminus S$, with $S$ ranging over all finite subsets of $V(H)$. For a unimodular random rooted graph, the number of ends is 1, 2, or infinite \cite{benjamini1999group}.) 
There are however many natural models where the infinite clusters are 1- or 2-ended and recurrent. In the present work, we give a general framework to prove indistinguishability, similar but more versatile than \cite{lyonsIndistinguishabilityPercolationClusters1999}, and use it to show the main general tool of this article, \Cref{cor: technical on graphs}.

This proposition essentially says that if one can take a branch from an infinite cluster and graft it onto another with positive probability, then the grafted cluster is indistinguishable from the original owner of the branch.
In other words, it is enough to know one branch of a cluster to know anything measurable and invariant about it with almost sure certainty.

We now give several models where we can apply our method to show indistinguishability.

First, we take a look at three models with two-ended clusters.
Our most straightforward example is the {\bf interchange process}, a random permutation model that was introduced in \cite{harris1972nearest}, with its physical relevance discovered in \cite{toth1993improved}. Take any Cayley graph $G$, and to every edge $e$, assign an independent unit rate Poisson process $\psi_e$ of clock rings, and each time a clock rings, swap the two particles at the endpoints of $e$, stopping at some fixed time $\beta>0$. At this point, the particles have been permuted by a random permutation $\pi_\beta$. It has been conjectured by B\'alint T\'oth \cite{toth1993improved} that, whenever $G$ is transient for simple random walk, for all large enough $\beta$, the permutation $\pi_\beta$ has infinite cycles almost surely. This was shown in \cite{angel2003random,hammondInfiniteCyclesRandom2013,hammond2015sharp} for regular trees $\mathbb{T}_d$, $d\ge 3$, and recently for $G=\Z^d$, $d\geq 5$, in \cite{elboimInfiniteCyclesInterchange2024}.

\begin{theorem}\label{t.interchange}
The interchange process on any Cayley graph $G$ 
 has \inde\ infinite cycles (together with the environment of the clocks $(\psi_e)$).
\end{theorem}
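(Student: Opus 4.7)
The plan is to apply Corollary~\ref{cor: technical on graphs} to the interchange process, with the infinite clusters being the infinite cycles of $\pi_\beta$ and the environment being the Poisson clock family $(\psi_e)_{e \in E(G)}$. Each infinite cycle is bi-infinite, so a \emph{branch} in the sense of the corollary is one of the two half-infinite portions obtained by cutting the cycle at a single vertex. It suffices to show that, conditionally on the existence of at least two infinite cycles, with positive conditional probability one can modify the clocks locally so as to graft a branch of one infinite cycle onto another.

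This modifiability is provided by the finite-energy property of Poisson clocks: conditional on $(\psi_e)$ outside any finite union of space-time rectangles, the clocks inside remain independent rate-one Poisson. Hence for any edge $\{x, y\}$ of $G$, with positive probability no rings of edges incident to $x$ or $y$ occur in a short terminal window $(\beta - \delta, \beta)$, and conditionally on this one can insert a single new ring on $\{x, y\}$ in the window; the effect on the permutation is exactly to replace $\pi_\beta$ by $(x, y) \cdot \pi_\beta$. Iterating along a path in $G$ and using the standard decomposition of a transposition of non-adjacent vertices as a product of adjacent ones, one realizes $\pi_\beta \mapsto (x, y) \cdot \pi_\beta$ for arbitrary $x, y$ at finite distance via a local clock modification of positive conditional probability.

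Given two distinct infinite cycles $C_1, C_2$ of $\pi_\beta$, pick $u \in C_1$ and $v \in C_2$; by the previous paragraph, we can with positive conditional probability realize the modified permutation $\sigma = (u, v) \cdot \pi_\beta$. Writing $C_1 = (\ldots, a_{-1}, a_0 = u, a_1, \ldots)$ and $C_2 = (\ldots, b_{-1}, b_0 = v, b_1, \ldots)$, a direct computation yields $\sigma(a_{-1}) = b_0$, $\sigma(b_{-1}) = a_0$, and $\sigma = \pi_\beta$ elsewhere, so in $\sigma$ the two cycles $C_1, C_2$ are replaced by two new bi-infinite cycles $\ldots \to a_{-2} \to a_{-1} \to b_0 \to b_1 \to \ldots$ and $\ldots \to b_{-2} \to b_{-1} \to a_0 \to a_1 \to \ldots$, each formed by grafting a half-infinite branch of $C_1$ onto the complementary half of $C_2$. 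This is exactly the grafting operation required by Corollary~\ref{cor: technical on graphs}, whose application then yields the claimed \indy\ of the infinite cycles together with the clock environment. The step I expect to be the main obstacle is not the probabilistic content, which is routine given finite energy, but the clean matching to the abstract framework: confirming that the cycle of a vertex together with the labelled clock environment defines a legitimate unimodular random rooted object, verifying measurability of the clock modification as a function of the configuration, and checking that each new cycle is correctly identified as the ``grafted cluster'' relative to its ``original owner of the branch'' in the sense of the corollary.
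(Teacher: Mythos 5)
Your surgery step --- resampling the Poisson clocks in a short terminal window $(\beta-\rho,\beta)$ along a path, so that the modified permutation equals $(x,y)\circ\pi_\beta$, and observing that composing with a transposition of points in two distinct infinite cycles splits and re-glues them into two new bi-infinite orbits each consisting of one half of $C_1$ grafted to the complementary half of $C_2$ --- is exactly the mechanism the paper uses. The finite-energy justification and the cycle bookkeeping ($\sigma(a_{-1})=b_0$, $\sigma(b_{-1})=a_0$, etc.) are correct.

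The gap is in the final packaging, and you yourself flag it as the ``main obstacle''. Proposition~\ref{cor: technical on graphs} cannot be applied as you propose, and the paper explicitly says so: it is formulated for a percolation $\omega$ on $G$ whose clusters are connected \emph{subgraphs} of $G$, with $K_0$ a connected component of $K\setminus S$ in $G$. A cycle of $\pi_\beta$ is not such an object: consecutive points of the orbit, $v$ and $\pi_\beta(v)$, are typically not $G$-neighbours, so an infinite cycle is not a connected subgraph of $G$ and ``a component of $K\setminus S$'' does not parse. What does apply is the more abstract Lemma~\ref{lem:technical lemma for indis}, with $\Pcal_{G,\eta}$ taken to be the cycle partition of the permutation and the stationary sequence taken to be the $\pi$-orbit $v_n=\pi^n(o)$ (stationarity is Lemma~\ref{lem: random permutation and SDRW are stationary}), with $S$ the path $P$ along which the clocks were resampled. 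The paper applies this lemma twice: once with the forward orbits $(\pi_1^n(y))_n$ and $(\pi_2^n(y))_n$, which coincide on the good event, to conclude that $y$ has the same type under $\pi_1$ and $\pi_2$; and once with the backward orbits $(\pi_1^{-n}(x))_n$ and $(\pi_2^{-n}(y))_n$, which are asymptotically equal, to conclude that $x$ under $\pi_1$ and $y$ under $\pi_2$ share a type. Chaining these gives the contradiction. So the idea is right, but to close the argument you must replace ``apply Corollary~\ref{cor: technical on graphs}'' with these two explicit invocations of Lemma~\ref{lem:technical lemma for indis} built around orbit stationary sequences.
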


For instance, if any of the cycles on $\Z^d$ had an asymptotic direction, then all of them would have the same direction, which is impossible by the symmetries and ergodicity of the model. (Note that it is also shown in \cite{elboimInfiniteCyclesInterchange2024} that infinite cycles can globally be approximated by Brownian motions, which is of course much stronger than having no asymptotic direction.)

\begin{figure}[hbtp]
    \centering
    \begin{subfigure}[t]{0.45\textwidth}
    \def\svgwidth{\textwidth}
    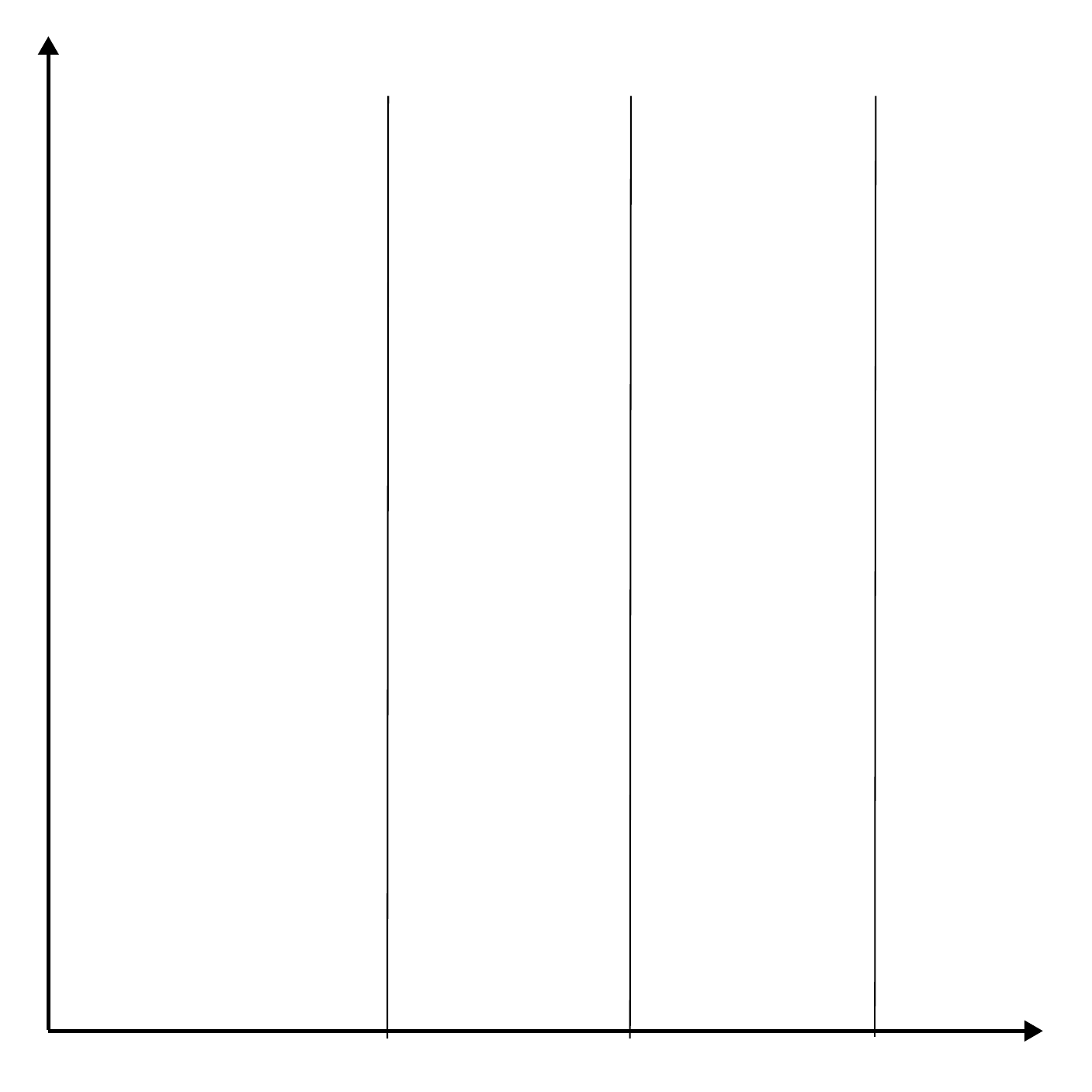
    \end{subfigure}
    \hfill
    \begin{subfigure}[t]{0.45\textwidth}
        \includegraphics[width=\textwidth]{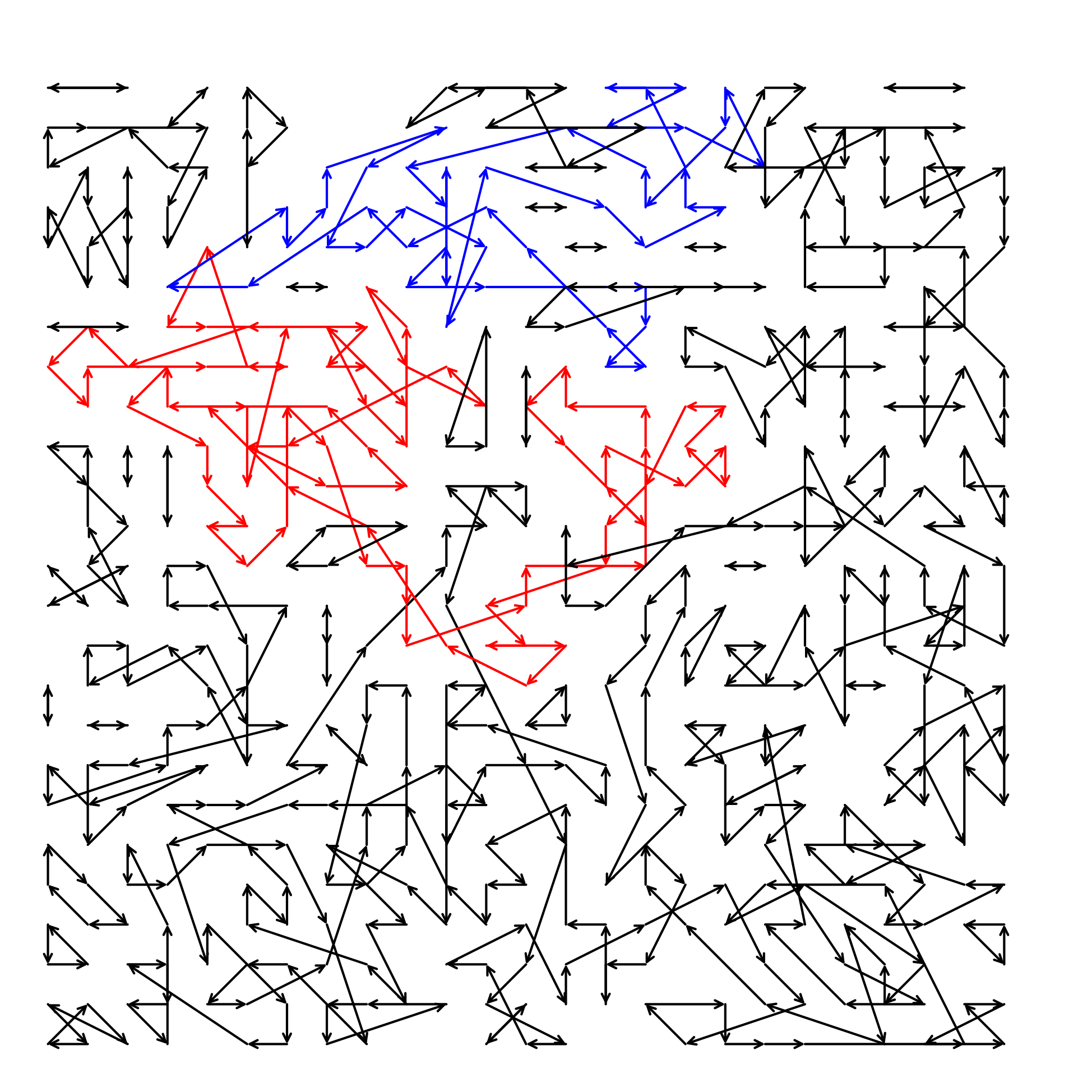}
    \end{subfigure}
\caption{On the left, the interchange process on a path of length 4; in red the trajectory of particle 1. On the right, the two longest cycles in the interchange process on $[\![1,25]\!]^2$ with $\beta=1$.}
\end{figure}

\Cref{cor: technical on graphs} does not literally apply here, as this is a permutation model, not a percolation, but the proof is very close to the two-ended case in the proof of that proposition. In the following two percolation models, on the other hand, the proposition applies directly.

The first one is the {\bf loop $O(n)$ model}.
This is a random model for disjoint loops and bi-infinite paths on a graph, parametrized by a loop weight $n\geq 0$ and an edge weight $x\geq 0$.
In finite volume, the probability of a configuration is proportional to $x^{|\omega|}n^{l(\omega)}$ where $l(\omega)$ is the number of loops and $|\omega|$ is the total number of edges in all loops.
It is natural and of particular interest on the hexagonal lattice, on which it is equivalent to a range of well-studied models, including self-avoiding walk for $n=0$, the Ising model for $n=1$, and proper 4-colorings for $n=2$ and $x=\infty$;  see~\cite{peled2019lecturesspinloopon, duminilcopin2016exponentialdecaylooplengths}. On infinite graphs, one can take an exhaustion by finite subsets, with certain boundary conditions, and consider weak limits, or at least subsequential limit measures. As far as we know, there are no proved examples with infinitely many infinite cycles, but similarly to the interchange process, it is conjectured that typically this is the case. For the best current results for $\Z^d$, $d\ge 3$, see \cite{QuiTaggi}. 
Let us note that \indy\ and our surgery technique may actually be useful for understanding the number of infinite clusters in some scenarios.  

\begin{theorem}
    \label{thm: O(n) indistinguishability}
    On any amenable Cayley graph $G$, in any limit point of the loop $O(n)$ measure for any $x,n \ge 0$, the infinite components are indistinguishable.
\end{theorem}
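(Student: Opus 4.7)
The plan is to invoke \Cref{cor: technical on graphs}. Every infinite component of a loop $O(n)$ configuration is automatically a bi-infinite self-avoiding path, since each vertex has degree $0$ or $2$; in particular such a component is $2$-ended and recurrent. Thus for the given subsequential limit $\mu$, it suffices to exhibit a local surgery that transplants a half-infinite branch from one infinite path to another with uniformly positive conditional probability.

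First I would observe that any weak limit $\mu$ of finite-volume loop $O(n)$ measures satisfies the Dobrushin--Lanford--Ruelle equations: for every finite edge set $F$ and every admissible boundary condition $\eta$ on $E(G)\setminus F$, the conditional law of $\omega_F$ given $\eta$ is proportional to $x^{|\omega_F|}\,n^{\Delta\ell(\omega_F,\eta)}$, where $\Delta\ell$ counts the number of loops formed when $\omega_F$ is spliced with $\eta$. Continuity of this specification in $\eta$ ensures that it survives the weak limit. Consequently, any two admissible configurations inside a fixed ball $B$ sharing the same exterior have conditional probabilities differing by a multiplicative factor bounded in terms only of $x$, $n$, and $|E(B)|$ (one may assume $x,n>0$; the other cases are trivial or vacuous).

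Next I would define the grafting surgery. If two distinct infinite paths $P_1$ and $P_2$ both meet a ball $B=B_r(v)$, each intersection is a finite disjoint union of arcs whose endpoints lie on $\partial B$. By re-pairing these endpoints, I can reconnect the configuration inside $B$ so that a half-infinite tail of $P_1$ becomes attached to a half-infinite tail of $P_2$, with any leftover strands absorbed into the modified infinite paths or into a bounded number of finite loops inside $B$. The move respects the degree-$0$-or-$2$ constraint, alters only edges inside $B$, and changes the loop count by at most $|E(B)|$, so by the previous paragraph it has conditional probability bounded below by a constant depending only on $r$, $x$, $n$. This is precisely the branch-grafting hypothesis of \Cref{cor: technical on graphs}, whose conclusion then yields \Cref{thm: O(n) indistinguishability}.

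The main obstacle I anticipate is ensuring that, with positive probability, two distinct infinite paths simultaneously meet a common ball, so that the surgery is applicable on a set of positive measure. This uses the translation invariance of $\mu$, and is where the amenability of $G$ enters: a F\o{}lner-averaging construction guarantees that subsequential weak limits of finite-volume loop $O(n)$ measures can be chosen translation invariant. A secondary, purely combinatorial obstacle is verifying that the re-pairing inside $B$ can be completed while respecting the degree constraint; this is routine but must be done uniformly enough that the lower bound on the grafting probability is independent of the exterior configuration.
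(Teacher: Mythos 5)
The proposal's outline (establish a finite-energy bound from the Gibbsian specification, then graft two infinite paths together and invoke \Cref{cor: technical on graphs}) superficially matches the paper's strategy, but the critical grafting step — the paper's \Cref{lem: grafting in O(n)} — is treated much too casually, and the role of amenability is misattributed.

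The step you label ``routine'' (re-pairing arcs inside a fixed ball $B$ so that a tail of $P_1$ attaches to a tail of $P_2$, while all remaining dangling ends on $\partial B$ are matched up consistently with the degree-$0$-or-$2$ constraint and the given boundary data) is not routine, and no reason is given that a feasible re-pairing always exists. The re-pairing must simultaneously accommodate every other strand entering $B$; whether a compatible internal configuration exists is a nontrivial combinatorial question, not solved by merely noting that the number of boundary endpoints is even. The tell is your own last paragraph: if the re-pairing could always be done locally with uniform lower bound, the argument would establish indistinguishability on non-amenable Cayley graphs as well, contradicting the restriction in the theorem statement.

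The paper does something genuinely different. It removes all finite loops from $\omega$ and, in a translation-invariant way, inserts short connecting paths between nearby infinite paths of opposite $\A$-type, obtaining an invariant random subgraph $\omega'$ of maximum degree $3$. If $\omega'$ contains a cycle $\gamma_0$, then $\omega\bigtriangleup\gamma_0\bigtriangleup\gamma_1\bigtriangleup\cdots\bigtriangleup\gamma_k$ (with $\gamma_1,\ldots,\gamma_k$ the finite loops of $\omega$ meeting $\gamma_0$) is a valid loop configuration in which two paths of opposite types have been grafted. If $\omega'$ contained no cycle, it would be an invariant forest with a positive density of trifurcations, which the Burton--Keane argument forbids on an amenable transitive graph. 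Amenability is therefore essential to \emph{prove that a valid grafting exists at all} — not, as you suggest, merely to guarantee translation invariance of subsequential limits via F\o{}lner averaging. Once this finite symmetric-difference modification is secured, the finite-energy bound (which the paper derives directly from the finite-volume Gibbs weights, along the same lines as your DLR paragraph) gives positive conditional probability to resampling onto the grafted configuration, and \Cref{cor: technical on graphs} concludes.

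So the gap is in the heart of the argument: you need to \emph{prove}, not assert, that a configuration realizing the graft exists, and the mechanism the paper uses for that is exactly where amenability enters.
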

\begin{figure}[hbtp]
    \centering
    \begin{subfigure}{0.49\textwidth}
    \includegraphics[width=\textwidth]{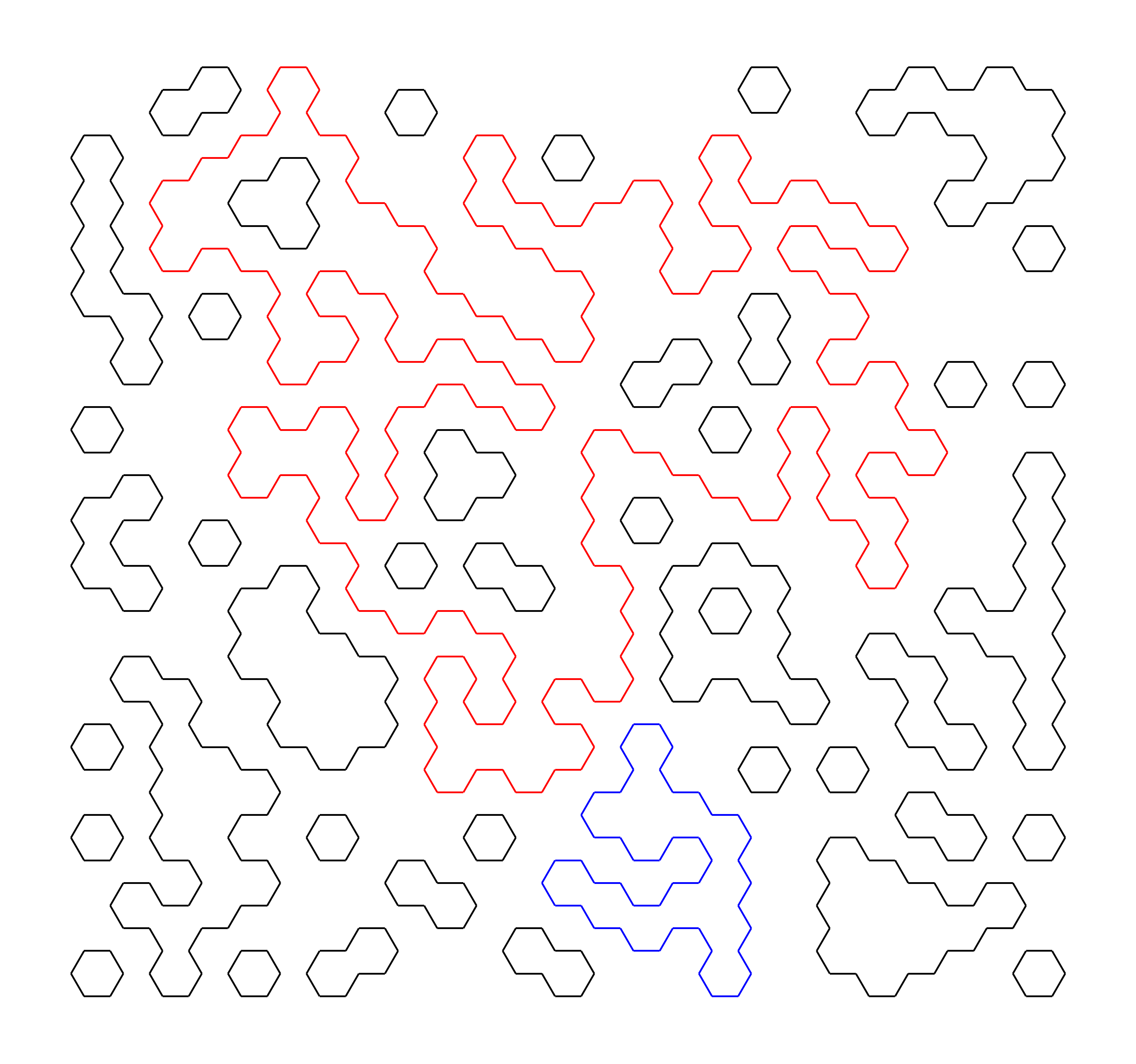}
    \end{subfigure}
    \begin{subfigure}{0.49\textwidth}
        \includegraphics[width=\textwidth]{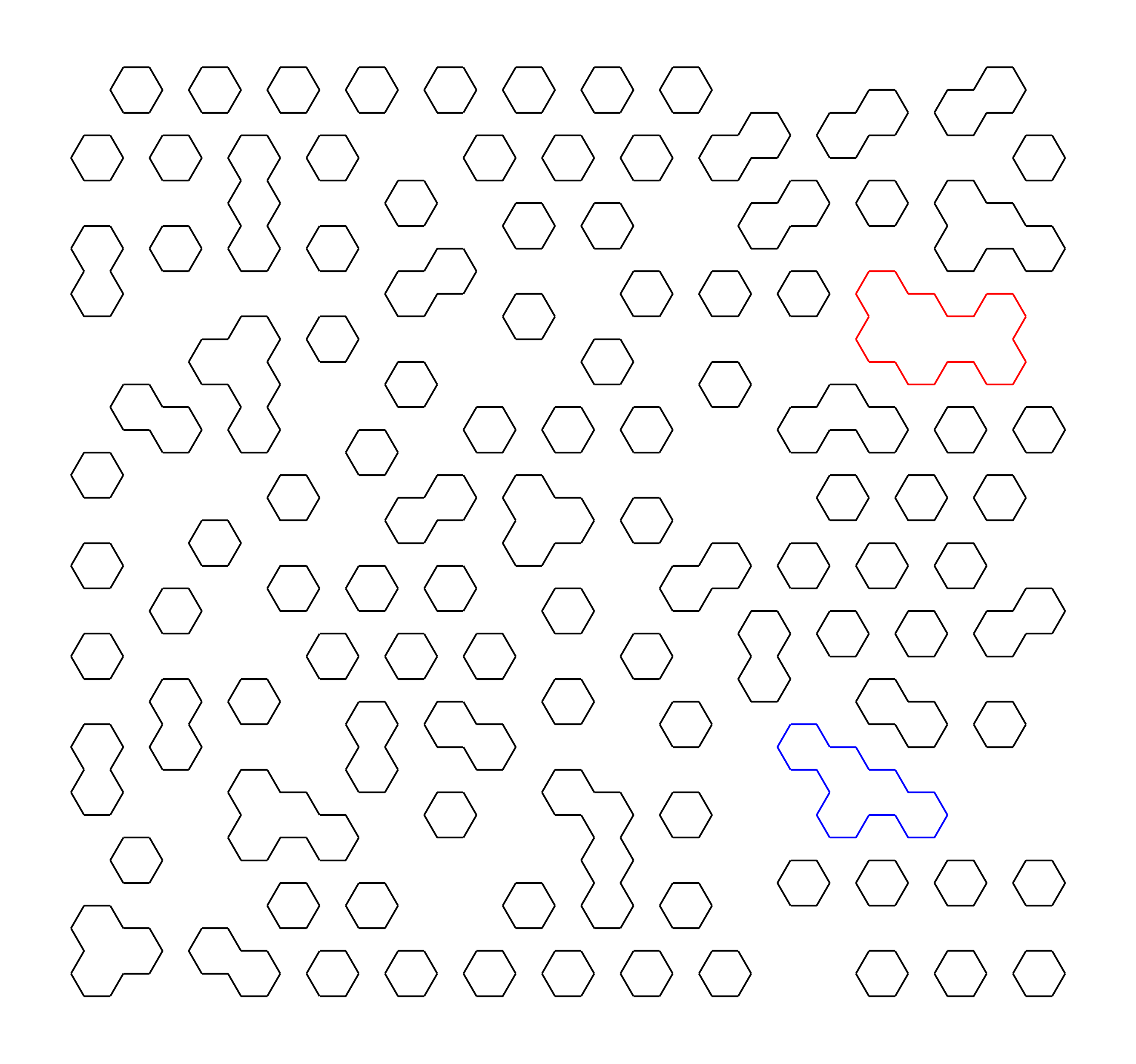}
    \end{subfigure}
    \caption{Two loop $O(n)$ samples in finite volume, $n=2$ on the left, $n=10$ on the right. These pictures are only meant to give an idea of the model in the simplest case, the two-dimensional hexagonal lattice, but here the infinite volume measure is not believed to have infinite cycles.}
\end{figure}

The next one is {\bf corner percolation on $\Z^2$} \cite{peteCornerPercolationMathbb2008a}, quite incidentally introduced by B\'alint T\'oth again, independently considered also in \cite{hooper2013renormalization} in relation with polygon exchange maps, and in \cite{seaton2023mathematical} as recreational math. This is a 2-regular random subgraph of $\Z^2$, where at each vertex we see one of the four possible ``corners''. It can also be considered as a degenerate double dimer model \cite{kenyon2014conformal}. The fully symmetric unbiased case was proved in~\cite{peteCornerPercolationMathbb2008a} to have only finite clusters, with irrational critical exponents. The biased case got a cursory mention in~\cite{peteCornerPercolationMathbb2008a}, addressed in detail in~\cite{cornerPerco2025}, where it is proved that almost surely it has infinitely many infinite clusters.

\begin{theorem}
    Biased corner percolation on $\Z^2$ has indistinguishable infinite clusters.
\end{theorem}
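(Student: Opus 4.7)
The plan is to apply \Cref{cor: technical on graphs} via a row-phase flipping surgery. Biased corner percolation has a rigid algebraic description: at each vertex $(i,j)\in\Z^2$ the configuration is a corner, i.e., one east/west and one north/south edge are chosen, and the constraint that adjacent vertices must make compatible choices forces the configuration to be parameterized by two independent i.i.d.\ Bernoulli sequences: a row phase $h_j\in\{0,1\}$ choosing between the two possible alternating patterns of horizontal edges in row $j$, and a column phase $v_i\in\{0,1\}$ for each column $i$. In the biased case these bits have a nontrivial bias, and every cluster is $2$-regular, so every infinite cluster is a two-ended bi-infinite path; by \cite{cornerPerco2025} there are a.s.\ infinitely many of them.

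The surgery we use is the flipping of a single phase bit $h_j$. This changes only row $j$'s horizontal edges, by shifting them by one unit, and yields an absolutely continuous measure with a bounded Radon-Nikodym factor $p/(1-p)\vee(1-p)/p$, which gives the finite-energy-type hypothesis with room to spare. Its geometric effect is that at every vertex $(i,j)$ in row $j$, the east/west direction of the corner is reversed, which permutes the intersection pattern of clusters with row $j$. Using the structural information from \cite{cornerPerco2025}, we expect that with positive probability we can find a row $j$ and two distinct infinite clusters $C_1,C_2$ that each make a single dominant ``crossing'' of row $j$ in some large finite window while all other clusters touch the window only in short (finite) excursions. On such a configuration, flipping $h_j$ detaches the half of $C_1$ lying above row $j$ and re-glues it to the half of $C_2$ lying below row $j$ (and vice versa), producing two new bi-infinite paths whose upper halves are swapped. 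This is exactly a branch graft in the sense needed for the two-ended case of \Cref{cor: technical on graphs}, and \indy\ of the infinite clusters follows.

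The main obstacle is that a row-phase flip is a \emph{global} operation on row $j$: we must guarantee that no other infinite cluster is spuriously cut or merged by the same flip, and that the finite excursions of other clusters through row $j$ do not conspire to create an unintended reconnection. This requires exhibiting the ``clean window'' event above as a positive-probability event; the plan is to do this via a mass-transport argument combined with the cluster-geometry results of \cite{cornerPerco2025}, controlling the parity and location of crossings of row $j$ by clusters in a sufficiently large window. A secondary subtlety is making the swap orientation-consistent with the corner constraint at the two endpoints of the window, which is a bounded combinatorial check once the window event is defined.
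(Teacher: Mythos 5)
Your overall strategy---make an absolutely continuous invariant modification that re-glues a branch of one infinite cluster to another, then invoke the grafting machinery---is the right one, and the row/column phase parametrization you use is correct. But there are two substantial gaps.

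First, the re-gluing mechanism is not established and, as stated, is unlikely to work. Flipping a single row phase $\eta_j$ is a \emph{global} change on the entire row $\{(i,j):i\in\Z\}$; the re-pairing of the half-clusters lying above and below row $j$ is determined by the whole configuration, not by any bounded window. Exhibiting a ``clean window'' where exactly two infinite clusters cross does not control what happens outside it, and in general a row flip re-pairs \emph{all} half-clusters crossing row $j$ simultaneously. You would need a global invariant that tells you which half-clusters reconnect after the flip, and this is precisely the role played in the paper's proof by the height function of \cite{peteCornerPercolationMathbb2008a,cornerPerco2025}: each infinite cluster carries a well-defined integer height, heights are in bijection with infinite clusters, and one can read off from the partial sums $X_n,Y_m$ of the phase variables exactly when two half-paths end up at the same height and therefore merge. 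Without some such accounting (or an equivalent), ``detaches the upper half of $C_1$ and re-glues it to the lower half of $C_2$'' is an assertion, not a consequence. The paper actually resamples a \emph{finite block} of column variables $\xi_i$, $i\in[0,b_1]$, rather than flipping a single bit, precisely because a single flip only shifts heights uniformly, whereas a resampled block gives enough freedom to force two chosen clusters of opposite type to both reach height $0$. There is also a necessary parity input (Lemma~\ref{cor: parity is recoverable}) that you do not address.

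Second, you cannot directly invoke \Cref{cor: technical on graphs} as stated, since the set on which the two environments differ after a row flip is the infinite set $\Z\times\{j\}$, whereas the proposition requires a finite $S$. You need either the weakened hypothesis in \Cref{rem: weaker condition smallness instead of finiteness}, or, as the paper does, a direct application of \Cref{lem:technical lemma for indis} with an explicitly constructed stationary sequence (the ``step-two'' permutation along a cluster) for which the infinite strip is ``small''. This is a fixable oversight, but it matters because it changes which hypotheses you must verify. In short, the surgery direction is reasonable but the proposal is missing the concrete global re-gluing control (the height-function bijection) that makes the argument go through, and the grafting lemma has to be applied in its strengthened infinite-difference form.
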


See \Cref{thm: corner percolation has indistinguishable clusters} for the precise version and the proof.

For the infinite cycles $(\pi^n(x))_{n\in\Z}$ of an invariant ergodic permutation on a left Cayley graph of a group $\Gamma$, an immediate consequence of indistinguishability is that the stationary sequence of the jumps $\left(\pi^{n}(o)^{-1}\pi^{n+1}(o)\right)_{n\in\Z}$, conditioned on the cycle of $o$ to be infinite, is ergodic. This automatically applies to the interchange process; for the loop $O(n)$ model, one can a choose a uniform random orientation of the clusters; for corner percolation, as we will see, there is a natural measurable way to choose an orientation in each infinite cluster. The ergodicity in this last case answers Conjecture 5.3 in \cite{cornerPerco2022}, a previous version of \cite{cornerPerco2025}. See the end of Subsection~\ref{s.corner} for the details.

\begin{figure}[hbtp]
    \centering
    \begin{subfigure}[t]{0.45\textwidth}
            \includegraphics[width=\textwidth]{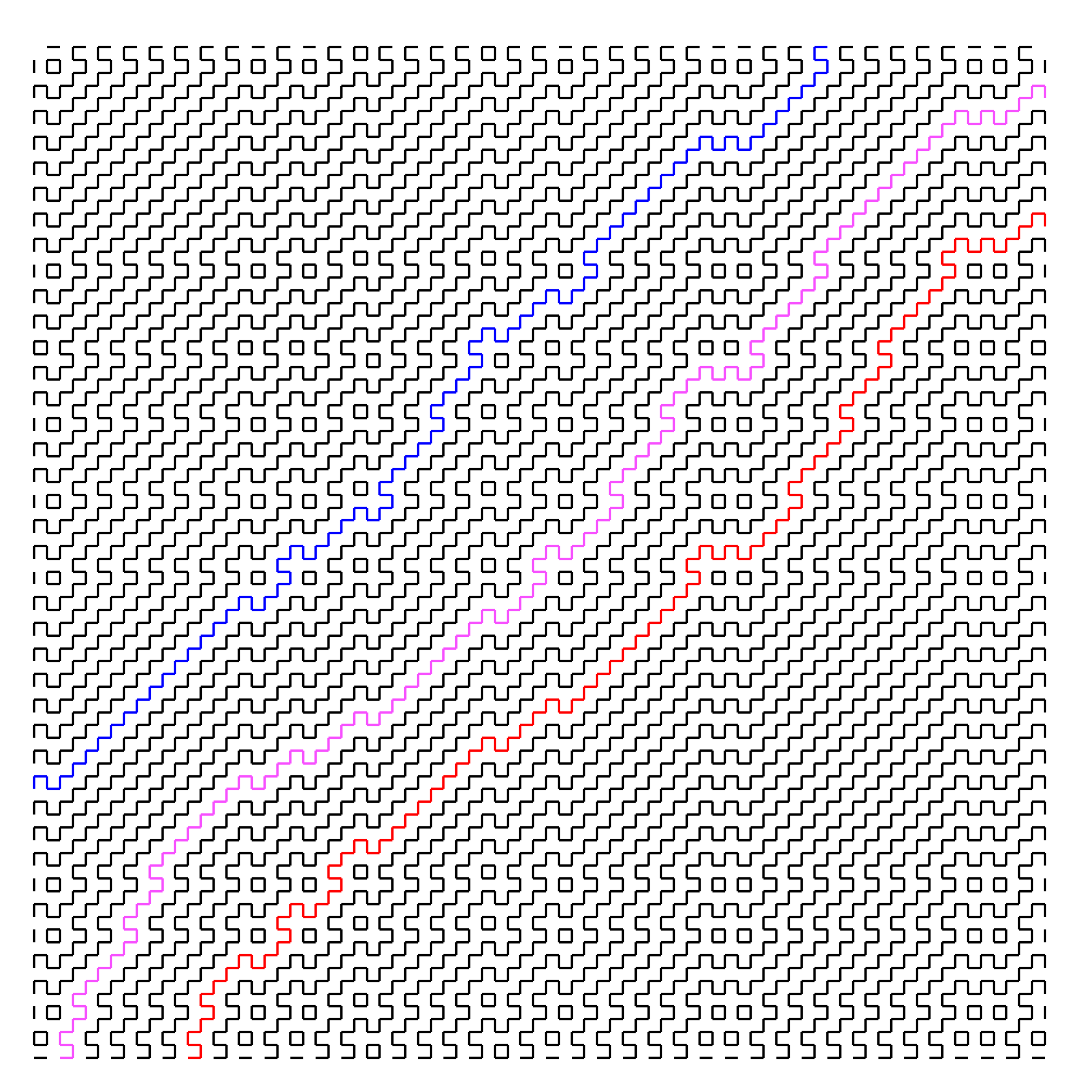}
    \end{subfigure}
    \hfill
    \begin{subfigure}[t]{0.45\textwidth}
            \includegraphics[width=\textwidth]{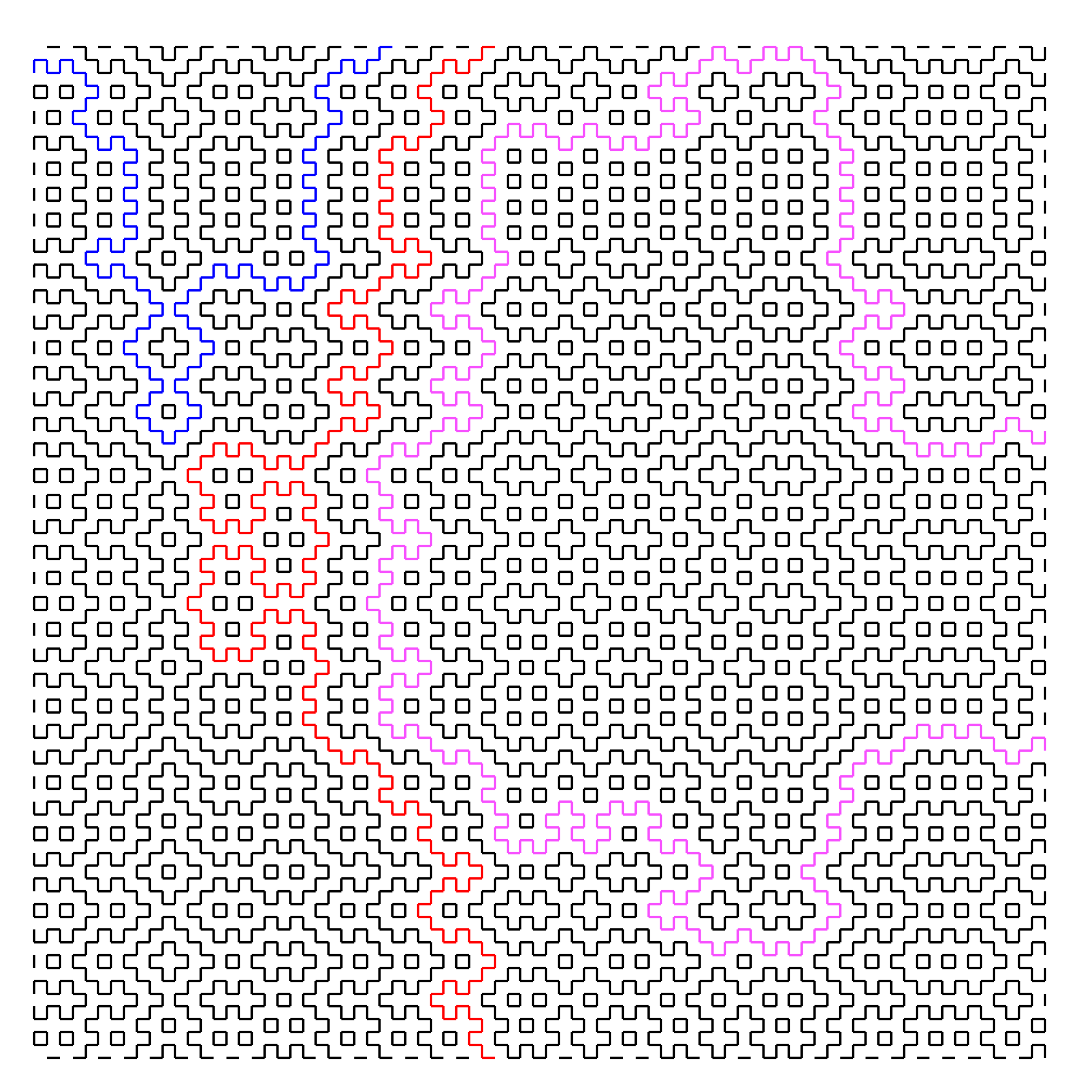}
    \end{subfigure}
\caption{On the left, corner percolation with $(p,q)=(0.2,0.8)$. It contains almost surely infinite paths that have an asymptotic slope of 1. On the right, $(p,q)=(0.5,0.5)$. In that case there are almost surely only finite paths.}
\end{figure}


Our \Cref{cor: technical on graphs} also provides a slightly different proof of the Lyons-Schramm \indy\ theorem \cite{lyonsIndistinguishabilityPercolationClusters1999}, more robust, applicable to a broader range of models with $\infty$-ended clusters, which are not exactly insertion tolerant. As an example, we will treat the {\bf Poisson zoo}, to which their result does not formally apply.

In this site percolation model \cite{ráth2022percolationworms}, we consider a random finite nonempty subset of vertices inducing a connected subgraph and containing the origin $o$ (a \textbf{lattice animal}), and place an independent Poisson number with parameter $\lambda>0$ of independent copies of this lattice animal around each vertex of a Cayley graph.
When the animal in question is almost surely a single vertex, this is just Bernoulli site percolation. It is easy to see if the expected cardinality of a lattice animal is finite, then the model is non-trivial in the sense that not all of $G$ is covered, but we get infinite clusters for $\lambda>0$ large enough whenever $p_c<1$ for Bernoulli percolation --- which is all non-1-dimensional Cayley graphs \cite{duminil2020existence, easo2024counting}. The main results of \cite{ráth2022percolationworms} and \cite{PeteRokob} give many interesting examples where already an arbitrarily small density $\lambda>0$ is enough for percolation. Now, this model is not insertion tolerant in general, as one can always insert some finite shape but not necessarily a single vertex. This is however enough to show that a Poisson zoo on any amenable Cayley graph $G$ has almost surely at most one infinite cluster, using the classic argument from \cite{burtonkeane}. On the other hand, it is shown in \cite{bowenFinitaryRandomInterlacements2019} that, for any nonamenable Cayley graph, there exists $\lambda$ and some animal distribution (specifically taking value in worms, i.e., finite simple paths) such that there are infinitely many infinite clusters.

\begin{theorem}
    \label{thm: indis for poisson zoo}
    Any Poisson zoo on any nonamenable Cayley graph $G$ has indistinguishable infinite clusters.
\end{theorem}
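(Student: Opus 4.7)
The plan is to apply \Cref{cor: technical on graphs}: it suffices to exhibit, with positive probability, a local modification of the configuration that takes an infinite branch off one infinite cluster and grafts it onto another. The case of a.s.~at most one infinite cluster is vacuous, so I may assume there are a.s.~infinitely many. Since $G$ is nonamenable, a Benjamini-Lyons-Schramm-style argument (combined with the weak insertion tolerance of the zoo described below) lets me conclude that each infinite cluster $C$ is $\infty$-ended and transient; in particular, there is a finite set of animals of $C$ whose removal splits off from $C$ an infinite branch $B$.

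The surgery rests on the weak insertion tolerance of the Poisson zoo. Since the placed animals form a Poisson point process on $V(G)\times \Omega_{\mathrm{an}}$ of intensity $\lambda\mu$ (with $\mu$ the animal law), for any finite $K\subset V(G)$ the subfamily of animals with center in $K$ is independent of the complementary process and is empty with positive probability upon resampling. Moreover, for any shape $A^*$ with $\mu(A^*)>0$ and any vertex $v$, the Slivnyak-Mecke formula gives positive conditional density for a copy of $A^*$ centered at $v$ to appear in a resample. So both ``clearing all animals centered in a prescribed finite region'' and ``adding a prescribed finite collection of new animals'' can be realized simultaneously with positive conditional probability.

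To implement the surgery, pick $x\in C$ and a finite $K\ni x$ such that every animal bridging some infinite branch $B\subset C$ to $C\setminus B$ is centered in $K$ (existence from $\infty$-endedness by enlarging $K$ to a ball around $x$). Then locate $x'\in C'$ within $G$-distance at most the diameter $D$ of a fixed positive-probability animal shape $A^*$ from some $y\in B$; such $(y,x')$ exist with positive conditional probability, by a mass-transport argument along the transient cluster $B$, exploiting the fact that both $C$ and $C'$ have positive density. Now resample the animals centered in the $D$-neighborhood of $K$: with positive probability the outcome has (i) no animal with center in that neighborhood, severing $B$ from $C\setminus B$, and (ii) a freshly sampled copy of $A^*$ translated so that $y$ and $x'$ both lie in its support, attaching $B$ to $C'$. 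This is exactly the branch-grafting surgery required by \Cref{cor: technical on graphs}.

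The main obstacle is controlling the ``outside'' animals: those with centers outside the resampled region, but whose support might nevertheless straddle $B$ and $C\setminus B$ and so recreate a connection. This is where transience and $\infty$-endedness of $C$, together with nonamenability of $G$, come in: a Lyons-Schramm-style mass-transport/random-walk argument on $C$ shows that a positive density of vertices have local neighborhoods through which every long-range connecting animal must be centered in an a priori bounded region, so a single enlargement of $K$ captures all of them. Closing this step is the crux of the proof and is precisely where the framework of \Cref{cor: technical on graphs} pays off despite the Poisson zoo failing classical vertex insertion tolerance.
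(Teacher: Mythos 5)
Your approach is substantially more complicated than the paper's, and the added complication creates a genuine gap. The paper's proof only \emph{adds}: fix $x$ of type $\A$ and $y$ of type $\lnot\A$, take a path $S$ from $x$ to $y$, and resample the Poisson marks on $S$ so that one extra animal is dropped at each vertex of $S$. This merges the two clusters. Since only animals were added, the difference between $\omega$ and $\omega'$ is contained in a finite set $\tilde S$, and each infinite component of (old cluster)$\setminus\tilde S$ is automatically a component of (merged cluster)$\setminus\tilde S$, so \Cref{cor: technical on graphs} applies directly twice ($y$ vs.\ $y$, then $x$ vs.\ $y$), giving a contradiction. No severing, no transience, no end-counting, and no use of nonamenability inside the proof (nonamenability only makes the statement non-vacuous) --- Proposition~\ref{cor: technical on graphs} already handles the $1$-, $2$-, and $\infty$-ended cases uniformly.

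Your cut-and-graft plan, by contrast, requires you to actually sever a branch $B$ from $C\setminus B$, and this is where it breaks. The Poisson zoo is not deletion tolerant: even if you resample so that no animal is \emph{centered} in the $D$-neighborhood of $K$, an animal centered arbitrarily far away can still have support passing through the neighborhood and reconnecting $B$ to $C\setminus B$. You correctly identify this as ``the main obstacle'' but do not close it; the proposed mass-transport argument asserting that ``every long-range connecting animal must be centered in an a priori bounded region'' is false in general, since the animal law $\nu$ can have unbounded support diameter. There is also no justification given that a finite $K$ capturing all bridges between $B$ and $C\setminus B$ exists, nor that the clusters are $\infty$-ended and transient (you appeal to a vague ``weak insertion tolerance'', which is not established). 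All of this machinery becomes unnecessary once you notice that \Cref{cor: technical on graphs} is an add-only surgery: you never need to give one cluster a branch at another cluster's expense, you only need to realize a configuration in which the two clusters share infinite tails, and merging accomplishes that for free.
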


In \cite[Proposition 1.4]{PeteRokob}, indistinguishability (for a truly insertion tolerant model) was used in proving the existence of a {\it unique} infinite cluster in a Poisson zoo process on $\mathbb{T}_3\times\Z^5$ at arbitrarily small $\lambda>0$ densities. 
   
   \begin{figure}[hbtp]
     \centering
    \begin{subfigure}[t]{0.45\textwidth}
    \includegraphics[width=\textwidth]{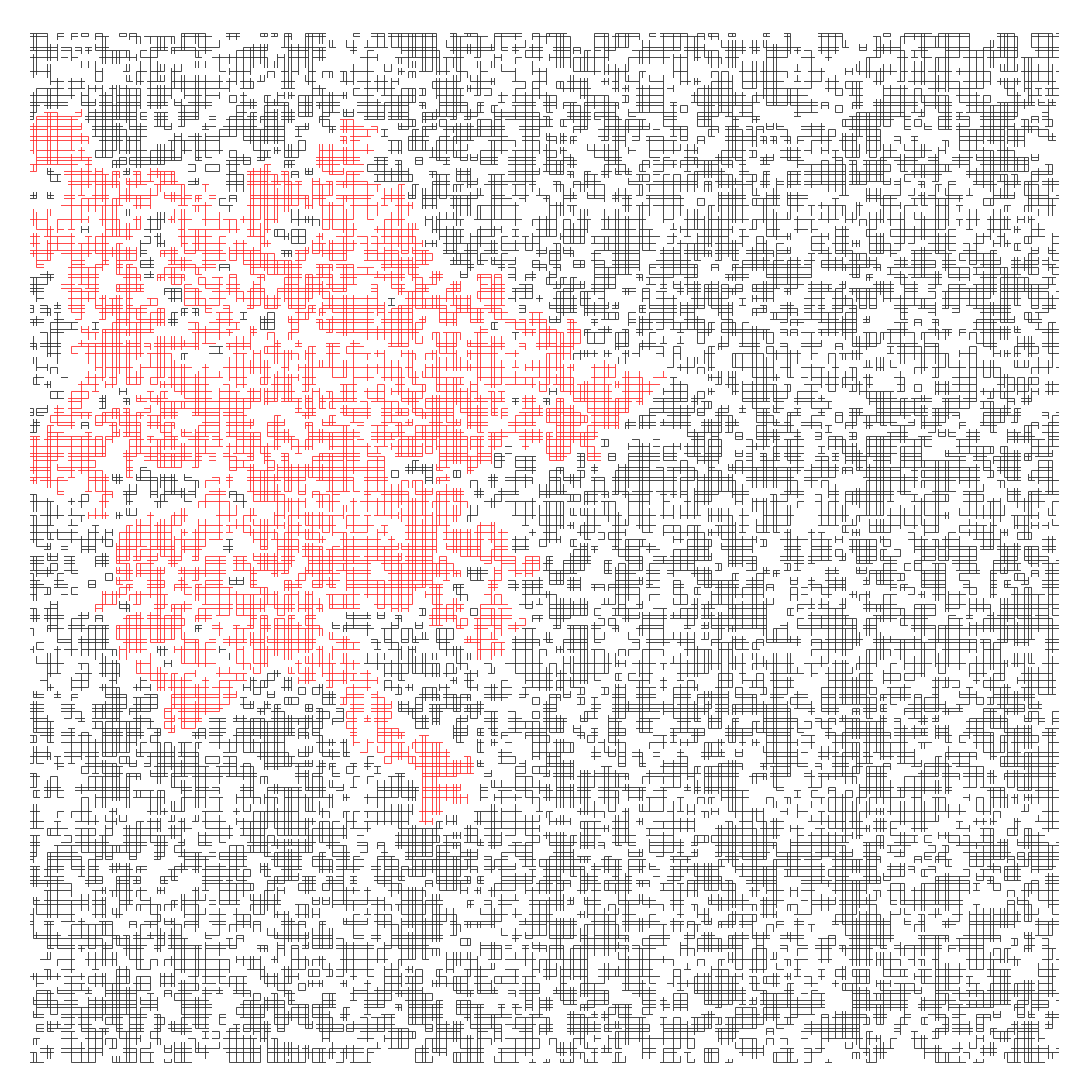}
    \end{subfigure}
    \hfill
    \begin{subfigure}[t]{0.45\textwidth}
        \includegraphics[width=\textwidth]{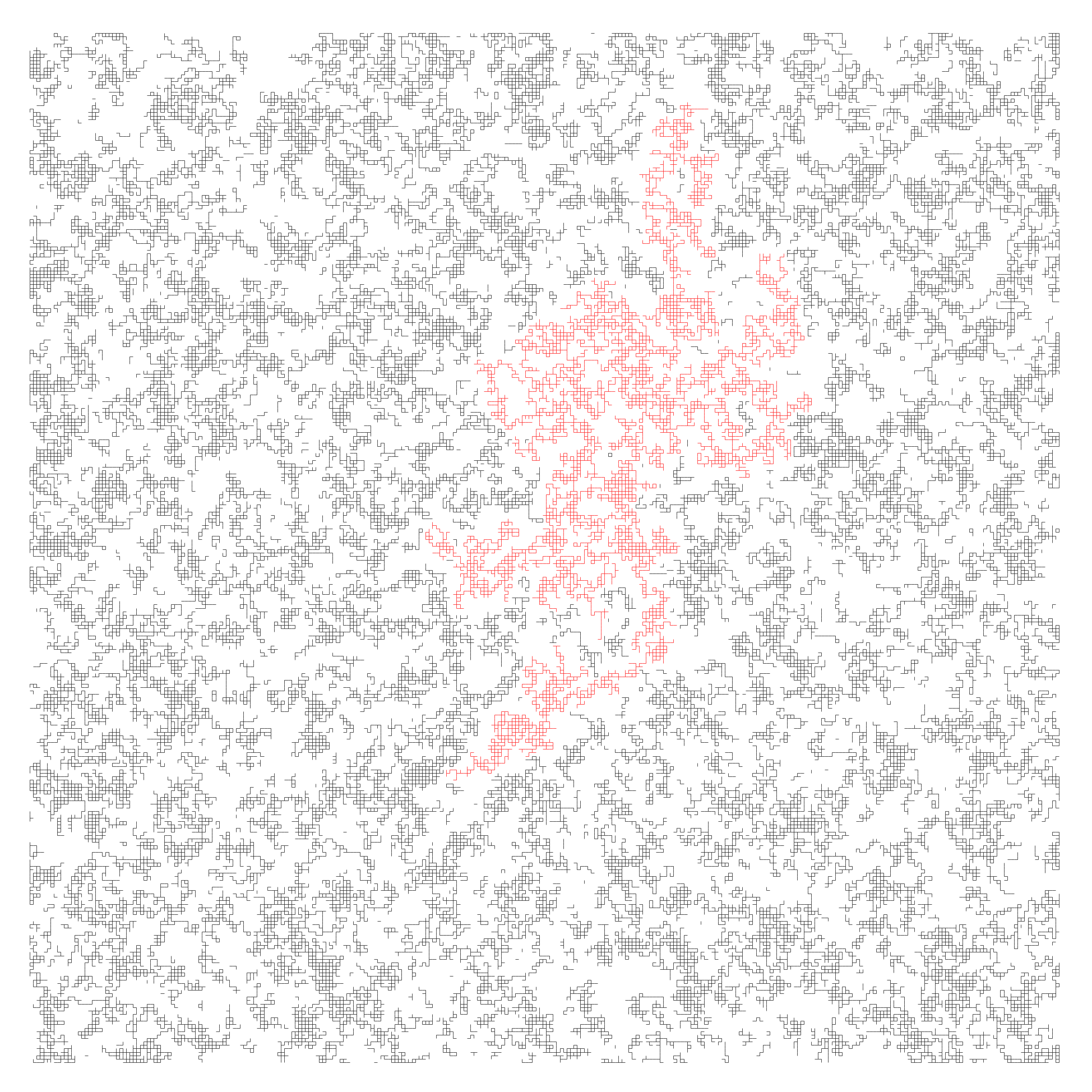}
    \end{subfigure}
\caption{On the left, a Poisson Zoo where the animals are boxes of exponentially distributed size. On the right, the animals are random walks of exponentially distributed length.}
\end{figure}

Our grafting method is much less useful in the context of one-ended infinite clusters. Tellingly, one of the main indistinguishability results that does not fit into our framework is the case of the Wired Uniform Spanning Forest in \cite{hutchcroft2017indistinguishability}, where all the components are one-ended trees. The problem is that grafting here is very destructive, as it involves replacing every vertex but finitely many in a cluster of a fixed vertex. As such, only properties that can be observed in a local manner, called \textbf{not essentially tail} (see Subsection~\ref{ss:nontail} for the precise definition), can be preserved after grafting. This still allows us to show the following partial indistinguishability result, extending a result from the Hutchroft and Nachmias article \cite{hutchcroft2017indistinguishability}, where specific properties of the WUSF were used: 

\begin{theorem}
\label{thm: nontail indis}
Let $\eta$ be a $\Gamma$-invariant, ergodic labelling of $G$, and $\omega$ an $\eta$-measurable site or bond percolation on $G$ such that $(\eta,\omega)$ is jointly $\Gamma$-invariant. There are no nontrivial not essentially tail cluster properties for $\omega$.
\end{theorem}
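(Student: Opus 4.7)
The plan is to argue by contradiction, exploiting the local nature of \emph{not essentially tail} properties together with the richness of the ergodic labelling $\eta$.

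First, unpack the definition (from Subsection \ref{ss:nontail}): such a cluster property $P$ is determined modulo null sets by the $(\omega, \eta)$-configuration in a bounded-radius ball around a $\Gamma$-equivariantly chosen distinguished vertex of the cluster. The stationary sequence of vertices in an infinite cluster that is definable from the ergodic labelling $\eta$ (the device emphasized in the abstract) provides such a selection concretely.

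Second, suppose toward contradiction that $P$ is nontrivial, so both $P$ and $\neg P$ are realised on infinite clusters with positive probability. The strategy is to produce a measure-preserving local modification of $\eta$ that flips $P$ on some cluster. Because $\omega$ is $\eta$-measurable and $\eta$ is ergodic and $\Gamma$-invariant, local $\eta$-patterns occur with positive density; one can therefore pair up vertices whose local configuration witnesses $P$ with those witnessing $\neg P$ and transplant the patterns via an invariant coupling, in the spirit of the update-tolerance technique used for the WUSF in \cite{hutchcroft2017indistinguishability}. Because $P$ depends only on the local pattern around the distinguished vertex, such a transplant flips the $P$-value of the affected cluster.

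Finally, close the argument with the mass transport principle: the coupling above induces a transport between vertices in $P$-clusters and vertices in $\neg P$-clusters that must balance in expectation by the joint $\Gamma$-invariance of $(\eta, \omega)$, while a nontrivial $P$ produces an imbalance.

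The main obstacle is constructing the transplanting coupling in the general factor setting, where no direct insertion tolerance or update tolerance is available a priori. This is where the stationary vertex sequence in clusters and the ergodicity of $\eta$ must be used in tandem: the sequence provides a $\Gamma$-equivariant enumeration of candidate vertices to pair up, and ergodicity of $\eta$ supplies enough occurrences of each relevant local pattern to execute the pairing in a measure-preserving fashion compatible with the $\eta$-measurability of $\omega$.
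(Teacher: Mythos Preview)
Your proposal has a fundamental misreading of the definition at its very first step. ``Not essentially tail'' does \emph{not} mean that the property is determined by a bounded-radius ball around some equivariantly chosen vertex. The actual definition in Subsection~\ref{ss:nontail} is the negation of being (a.s.\ equal to) a tail property: a tail property is one unchanged by any finite modification of $\eta$ that only alters the cluster of $o$ finitely, and ``not essentially tail'' simply means $\A$ is not a.s.\ equal to any such property. Nothing in that says $\A$ is \emph{locally} determined; the only immediate consequence is that \emph{some} finite modification can flip the property with positive probability. Extracting a usable ``pivotal'' finite set from this negation is precisely the content of the paper's argument, not a starting hypothesis.

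Beyond this, what you wrote is a strategy outline rather than a proof: the transplanting coupling is never constructed, and you yourself flag this as ``the main obstacle''. The mass-transport closing step is also unclear --- you do not explain what transport you send or why nontriviality of $P$ would force an imbalance. The paper's proof is quite different and avoids all of this machinery: it first shows (by a short contradiction argument) that if $\A$ is not essentially tail then there is a finite set $S$ such that independently resampling $\eta|_S$ has positive probability of flipping the $\A$-type of $x$ while changing the cluster of $x$ only finitely; then it applies \Cref{cor: technical on graphs} directly to this $S$, since the old and new clusters share a common infinite branch $K_0$, yielding the contradiction. No transplanting, no mass transport, no selection of distinguished vertices is needed.
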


There are numerous open problems. A further model where two-ended clusters arise naturally and our methods might be applicable is the {\bf annealed random spatial permutation model} in $\R^d$ from \cite{betz2009spatial}. A mysterious model with long range correlations and limited surgery tolerance, with possibly multiple infinite clusters, is the {\bf Bernoulli line percolation} model \cite{HILARIO20195037} on $\Z^d$. For the {\bf Wired Minimal Spanning Forest}, even \indy\ of trees by the number of ends is open on amenable Cayley graphs where $\theta(p_c)=0$ is not known \cite[Chapter 11]{LPbook}. In the nonamenable setting, an interesting family of models are the invariant percolations arising from {\bf spaces with measured walls} \cite{MukhReck}. And how about the case of the {\bf loop $O(n)$ model} on nonamenable Cayley graphs, left open by our \Cref{thm: O(n) indistinguishability}, or the loop models associated to the planar site percolation models addressed in \cite{GlazmanHZ}?
\medskip

In the rest of this paper, in \Cref{sec:defs} we define the main concepts and provide the main lemmas, then we prove indistinguishability in each different setting in \Cref{sec:proofs}.

\section*{Acknowledgements}

We thank Miklós Abért for asking about the indistinguishability of infinite cycles in the interchange process. 

Our work was supported by the Hungarian National Research, Development and Innovation Office, Advanced grant 151155 (El Alami) and grant K143468 (Pete), by the ERC grant 810115-DYNASNET (Pete and Timár), and by the Icelandic Research Fund grant 239736-051 (Timár).

\section{Definitions and general lemmas \label{sec:defs}}

\subsection{Definitions}\label{ss:defs}

A \textbf{network} is a locally finite countable graph $G$ together with a measurable labelling $\eta\in L^G$ of its vertices, where $L$ is a chosen Polish space.
We could also take a labelling of the edges, or both edges and vertices with minimal modifications. 
We will in the future fix $L$ and denote a network by $(G,\eta)$.

A \textbf{network homomorphism} $\varphi$ from one network $(G,\eta)$ to another $(G',\eta')$ is a homomorphism from $G$ to $G'$ such that $\eta(v)=\eta'(\varphi(v))$ for any $v\in G$.
If bijective, it is called a \textbf{network isomorphism}.

A \textbf{rooted network} is a triplet $(G,\eta,o)$ where $o$ is a vertex of a \textit{connected} network $(G,\eta)$, and we call a network isomorphism from $(G,\eta)$ to $(G',\eta')$ that sends $o$ onto $o'$ a \textbf{rooted network isomorphism}. 
Let $\mathcal G_*^L$ be the set of rooted, networks up to rooted network isomorphisms.
We endow this set with the topology in which two rooted networks are close if they are isomorphic in a large ball around their roots $o$ and $o'$, and each label in the ball around $o$ is close to the label of its image in the topology of $L$.

From now on, let $(G,\eta,o)$ be a random rooted network with distribution $\mu$, and $\Pcal_{G,\eta}$ a $(G,\eta,o)$-measurable partition of $G$ that is invariant under rerooting.
We write $\Pcal_{G,\eta}(o)$ for the unique set of $\Pcal_{G,\eta}$ containing $o$.

We call any measurable subset $\A$ of $\mathcal G_{*}^L$ that is invariant under rerooting in $\Pcal_{G,\eta}(o)$ a \textbf{component property}.
We say that $\Pcal_{G,\eta}$ has \textbf{indistinguishable infinite components} if, for every component property $\A$, the event $\{(G,\eta,o)\in\A\}$ has either probability 0 or 1 conditionally on the component of $o$ being infinite.

\begin{lemma}
\label{lem:technical lemma for indis}
    Let $\A$ be a component property and let $(G,\eta,o)$ be a random rooted network.
    Let $(v_n)$ be a stationary sequence started at $v_0=o$ on $(G,\eta,o)$, meaning that $(G,\eta,v_n)$ has the same distribution as $(G,\eta,o)$, and such that $v_n\in\Pcal_{G,\eta}(o)$ for every $n\in\N$. It can depend on $(G,\eta,o)$ and additional randomness.
    Furthermore, let $S_{G,\eta,o}$ be a $(G,\eta,o)$-measurable subset of $G$ that is ``small'' for $(v_n)$, meaning that almost surely for every $R>0$, $(v_n)$ spends finitely many steps in the $R$-neighbourhood of $S_{G,\eta,o}$.

    Suppose that there exists $(G',\eta',o')$, $(v'_n)$ and $S'_{G',\eta',o'}$ 
     with all the same properties assumed in the previous paragraph for $(G,\eta,o)$, $(v_n)$ and $S_{G,\eta,o}$,
    and that they are coupled in such a way that there exists $M,N\in\N$ such that on some event $\W$, for every $n\in\N$, the component $K$ of $v_{n+N}$ in $G\setminus S_{G,\eta,o}$ and the component $K'$ of $v'_{n+M}$ in $G'\setminus S'_{G',\eta',o'}$ are rooted network isomorphic with roots $v_{n+N}$ and $v'_{n+M}$ and environments $\eta$ and $\eta'$.
    
    Then conditionally on $\W$, up to probability zero events, $(G,\eta,o)\in\A$ if and only if $(G',\eta',o')\in\A$.
\end{lemma}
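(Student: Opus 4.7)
The plan is to combine the rerooting-invariance of $\A$ within cluster classes with an approximation of $\A$ by a local cylinder event in $\mathcal G_*^L$, and then to exploit the coupling on $\W$ to transfer the approximate answer from $(G,\eta)$ to $(G',\eta')$ after shifting the root far into the cluster. Concretely, since $\A$ is invariant under rerooting in $\Pcal_{G,\eta}(o)$ and every $v_n$ lies in $\Pcal_{G,\eta}(o)$, we have $\mathbf{1}_\A(G,\eta,o)=\mathbf{1}_\A(G,\eta,v_n)$ for every $n$; the analogous identity $\mathbf{1}_\A(G',\eta',o')=\mathbf{1}_\A(G',\eta',v'_n)$ holds in the primed system. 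The target is then to pick $n$ so large that the $R$-neighbourhoods of $v_{n+N}$ in $G$ and of $v'_{n+M}$ in $G'$ avoid $S_{G,\eta,o}$ and $S'_{G',\eta',o'}$ respectively, so that on $\W$ they coincide via the rooted network isomorphism between the corresponding components of $G\setminus S_{G,\eta,o}$ and $G'\setminus S'_{G',\eta',o'}$.

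Given $\e>0$, I would choose a single local cylinder event $\A_R\subset\mathcal G_*^L$, depending only on the $R$-ball around the root, such that
\[
\P\big((G,\eta,o)\in\A\triangle\A_R\big)<\e\quad\text{and}\quad\P\big((G',\eta',o')\in\A\triangle\A_R\big)<\e.
\]
This is possible because the cylinder events form an algebra generating the Borel $\sigma$-algebra of the Polish space $\mathcal G_*^L$, so any Borel set is approximable in total variation under the single measure $\tfrac12(\mathrm{Law}(G,\eta,o)+\mathrm{Law}(G',\eta',o'))$, hence also under each marginal. By stationarity of $(v_n)$ and $(v'_n)$, the same bounds hold when the root is replaced by $v_{n+N}$ or $v'_{n+M}$, for any $n$.

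The smallness condition amounts to a Borel--Cantelli statement: for each $R$ the event $\{v_{n+N}\in B_R(S_{G,\eta,o})\}$ occurs only finitely often almost surely, hence $\P\big(\W\cap\{v_{n+N}\in B_R(S_{G,\eta,o})\}\big)\to 0$ as $n\to\infty$, and analogously on the primed side. On the complementary event inside $\W$, every path of length at most $R$ from $v_{n+N}$ in $G$ avoids $S_{G,\eta,o}$, so the $R$-ball of $v_{n+N}$ in $G$ is contained in the component $K$ of $v_{n+N}$ in $G\setminus S_{G,\eta,o}$ and coincides with the $R$-ball of $v_{n+N}$ inside $K$; the rooted network isomorphism $K\to K'$ guaranteed on $\W$ then maps this ball with all its labels onto the $R$-ball of $v'_{n+M}$ in $G'$, giving $\mathbf{1}_{\A_R}(G,\eta,v_{n+N})=\mathbf{1}_{\A_R}(G',\eta',v'_{n+M})$. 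Inserting the two indicators $\mathbf{1}_{\A_R}(G,\eta,v_{n+N})$ and $\mathbf{1}_{\A_R}(G',\eta',v'_{n+M})$ and using the triangle inequality yields
\[
\E\big[\mathbf{1}_\W\,|\mathbf{1}_\A(G,\eta,o)-\mathbf{1}_\A(G',\eta',o')|\big]\le 2\e+o_n(1),
\]
and letting $n\to\infty$ followed by $\e\to 0$ finishes the argument.

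The main obstacle I anticipate is the simultaneous local approximation of the abstract Borel set $\A$ under the two (possibly different) marginal laws of $(G,\eta,o)$ and $(G',\eta',o')$ — this is the one place where one genuinely uses that $\mathcal G_*^L$ is Polish and that $\A$ is Borel. The remaining ingredients (the containment of the $R$-ball inside $G\setminus S_{G,\eta,o}$ once the root is far from $S_{G,\eta,o}$, and the conversion of ``finitely many visits'' into a quantitative $o_n(1)$ bound uniform in $R$) are standard.
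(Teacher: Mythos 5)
Your proof is correct, and it takes a genuinely different and more elementary route than the paper's. The paper actually proves the more general \Cref{lem:technical extended version}, in which the coupling only identifies positive-density subsequences $(v_{\psi(n)})$ and $(v'_{\psi'(n)})$, and obtains \Cref{lem:technical lemma for indis} as the special case $\psi(n)=n+N$, $\psi'(n)=n+M$. To handle that weaker hypothesis, the paper introduces the Birkhoff empirical density $D_{G,\eta,o}(A_\e,(v_n))$ of a cylinder approximant along the stationary sequence, shows via the ergodic theorem, conditioning on $\A$, Markov, and Borel--Cantelli that $D_{G,\eta,o}(A_{2^{-n}},(v_m))\to\mathbf 1_\A$, and then uses the positive-density overlap of the coupled subsequences to force the two densities to have the same $\{0,1\}$ limit. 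Your argument bypasses the ergodic theorem entirely: rerooting-invariance of $\A$, a simultaneous cylinder approximation under the mixture of the two marginal laws, stationarity to transport the approximation error to the shifted root, and smallness of $S$ to push $v_{n+N}$ and $v'_{n+M}$ outside $B_R(S)$ and $B_R(S')$, together give the direct bound $\E\big[\mathbf 1_\W\,|\mathbf 1_\A(G,\eta,o)-\mathbf 1_\A(G',\eta',o')|\big]\le 2\e+o_n(1)$, and sending $n\to\infty$ then $\e\to0$ finishes. This is cleaner for the stated lemma (and does not assume the two marginals coincide), but does not generalize to the subsequence version, because $(G,\eta,v_{\psi(n)})$ is no longer stationary for a general random increasing $\psi$ --- there the density argument really is needed. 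One minor remark: the ``$o_n(1)$ uniform in $R$'' you flag as a concern is not required; you fix $\e$, hence $R$, before letting $n\to\infty$, so pointwise Borel--Cantelli plus dominated convergence already gives the vanishing for each fixed $R$.
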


Typical stationary sequences $(G,\eta,v_n)$ when $G$ is a fixed Cayley graph and $\eta$ is an invariant percolation are the Simple Delayed Random Walk on the percolation cluster and the cycle of $o$ in a random permutation, both discussed in \Cref{lem: random permutation and SDRW are stationary}, as well as a uniform point in a hyperfinite exhaustion of the cluster of $o$, see \cite{hutchcroft2017indistinguishability}.

For an example of smallness of an infinite set w.r.t.~a stationary process, take simple random walk on $\Z^4$ decorated by i.i.d.~variables on the vertices, and $S$ the $x$-axis.

We now give a more immediately applicable version in the context of percolation with an environment on Cayley graphs. In some cases, such as \Cref{s.corner}, we need to apply \Cref{lem:technical lemma for indis}, but this special case might be more familiar.
Here, $G$ is a (left) Cayley graph of a finitely generated group $\Gamma$; every $g\in\Gamma$ acts on $G$ by $g\cdot x = xg^{-1}$, translating $g$ to the identity of $\Gamma$, which we call $o$.

\begin{proposition}
\label{cor: technical on graphs}
       Let $G$ be a Cayley graph of a finitely generated group $\Gamma$, $\eta$ an invariant labelling of $G$ and $\omega$ an $\eta$-measurable percolation process such that $(\eta,\omega)$ is jointly $\Gamma$-invariant.

       Let $(\eta',\omega')$ be an identically distributed copy of $(\eta,\omega)$, $x$ and $x'$ two vertices, $E$ an $(\eta,\eta')$-measurable event on which the clusters $K$ of $x$ in $\omega$ and $K'$ of $x'$ in $\omega'$ are infinite, and assume that, conditionally on $E$, $\eta$ and $\eta'$ only differ in a finite set $S$ such that $K\setminus S$ and $K'\setminus S$ share at least one infinite connected component $K_0$.

    Let $\A$ be a component property. Then, conditionally on $E$, $x$ has type $\A$ in $\eta$ if and only if $x'$ has type $\A$ in $\eta'$ (up to a probability 0 event).
\end{proposition}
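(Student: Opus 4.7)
The plan is to apply \Cref{lem:technical lemma for indis} directly, taking $(G, \eta, o) := (G, \eta, x)$, $(G', \eta', o') := (G, \eta', x')$, the partition $\Pcal_{G,\eta}$ to be the connected components of $\omega$ (and similarly for $\omega'$), the ``small'' sets $S_{G,\eta,o} = S'_{G',\eta',o'} := S$, and the event $\W := E$.

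The observation that makes everything work is that, on $E$, the labellings $\eta$ and $\eta'$ coincide on $V(G) \setminus S$, so the labelled subgraph $G \setminus S$ is literally the same object in the two environments. In particular $K_0$ carries the same labels on both sides, and for any vertex $v \in K_0$, the component of $v$ in $G \setminus S$ equipped with $\eta$ is the same rooted labelled graph as its component in $G \setminus S$ equipped with $\eta'$; the identity map is the required rooted network isomorphism as soon as the two roots coincide.

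It remains to produce stationary sequences $(v_n)$ in $K$ from $x$ and $(v'_n)$ in $K'$ from $x'$, satisfying the smallness hypothesis with respect to $S$ and coupled on $E$ so that, for some almost surely finite random times $N, M$, one has $v_{N+n} = v'_{M+n}$ for every $n \geq 0$. The concrete construction depends on the structure of the clusters: in the two-ended case (as for corner percolation or the loop $O(n)$ model) the natural choice is the deterministic traversal along the end of the cluster that points toward $K_0$, so that both sequences enter $K_0$ after finitely many steps and then march along $K_0$ in the same direction, which can be aligned by a shift in $N$ or $M$. In the transient, infinite-ended case (e.g.\ the Poisson zoo) one uses simple delayed random walks on $K$ and $K'$, for which smallness of the finite set $S$ is automatic from transience.

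The main technical obstacle is precisely this last coupling in the transient setting: one must force two SDRWs, started independently at $x$ and $x'$, to coincide from some finite time onward, while keeping $(v'_n)$ distributionally a stationary sequence on $(G,\eta',x')$. The natural implementation is to run $(v_n)$ first, fix a target $y \in K_0$ that it visits, and then use the strong Markov property of SDRW on $K'$ to couple $(v'_n)$ to reach $y$ in finite time and subsequently shadow $(v_n)$. The agreement of $\eta$ and $\eta'$ on $K_0$, together with reversibility of SDRW, is what allows this to be arranged without destroying stationarity. Granted this coupling, \Cref{lem:technical lemma for indis} yields the conclusion of the proposition directly.
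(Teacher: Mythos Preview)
Your sketch covers only two of the three cases the paper handles, and the missing one is the hardest. An infinite cluster in an invariant percolation on a Cayley graph can have $1$, $2$, or $\infty$ ends; you address the $2$-ended case and the $\infty$-ended (transient) case, but say nothing about one-ended clusters. A one-ended cluster may very well be \emph{recurrent} for SDRW, and then your coupling strategy collapses: you cannot force the two walks to enter $K_0$ and stay there forever, and the finite set $S$ is no longer ``small'' in the sense of \Cref{lem:technical lemma for indis}, since the walk returns to any neighbourhood of $S$ infinitely often. The paper treats this case separately, invoking the extended \Cref{lem:technical extended version} rather than \Cref{lem:technical lemma for indis}: it constructs a finite buffer $D_1\supset (K\setminus K_0)\cup(K'\setminus K_0)$ with certain connectivity properties, classifies the excursions of SDRW away from $D_1$ by their entry/exit edges, and then couples the two SDRWs by an alternating replacement scheme on excursions of well-chosen types, so that a positive \emph{upper density} of steps is matched. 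Null-recurrence provides smallishness. None of this is hinted at in your proposal.

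Your transient-case coupling is also not quite right as stated. You propose to ``force $(v'_n)$ to reach $y$ in finite time'' via the strong Markov property, but in a transient graph SDRW has no obligation to hit a given vertex; you only get this with positive probability, not almost surely. Moreover, once both walks sit at $y\in K_0$, ``shadowing'' only works as long as the walk stays in a region where the $\omega$- and $\omega'$-neighbourhoods of every vertex coincide; near the inner boundary of $K_0$ they need not. The paper deals with this by passing to a transient subcomponent $K_1\subset K_0\setminus B$ (with $B$ the inner boundary in both configurations), sending both walks to a fixed $y\in K_1$ along deterministic paths with positive probability, and \emph{conditioning} on the continuation staying in $K_1$. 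The event $\W$ is the intersection of $E$ with this successful coupling, not $E$ itself.

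Finally, in the $2$-ended case ``deterministic traversal along the end pointing toward $K_0$'' is the right idea but is not automatically stationary; the paper builds an invariant permutation $\pi$ from local data (the minimal separating radii $\kappa(y)$ and auxiliary i.i.d.\ labels) to get a genuine stationary sequence. You would need something of this sort as well.
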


\begin{figure}[htbp]
 \centering
    \begin{subfigure}[t]{0.3\textwidth}
    \def\svgwidth{\textwidth}
\begingroup%
  \makeatletter%
  \providecommand\color[2][]{%
    \errmessage{(Inkscape) Color is used for the text in Inkscape, but the package 'color.sty' is not loaded}%
    \renewcommand\color[2][]{}%
  }%
  \providecommand\transparent[1]{%
    \errmessage{(Inkscape) Transparency is used (non-zero) for the text in Inkscape, but the package 'transparent.sty' is not loaded}%
    \renewcommand\transparent[1]{}%
  }%
  \providecommand\rotatebox[2]{#2}%
  \newcommand*\fsize{\dimexpr\f@size pt\relax}%
  \newcommand*\lineheight[1]{\fontsize{\fsize}{#1\fsize}\selectfont}%
  \ifx\svgwidth\undefined%
    \setlength{\unitlength}{1133.85826772bp}%
    \ifx\svgscale\undefined%
      \relax%
    \else%
      \setlength{\unitlength}{\unitlength * \real{\svgscale}}%
    \fi%
  \else%
    \setlength{\unitlength}{\svgwidth}%
  \fi%
  \global\let\svgwidth\undefined%
  \global\let\svgscale\undefined%
  \makeatother%
  \begin{picture}(1,1)%
    \lineheight{1}%
    \setlength\tabcolsep{0pt}%
    \put(0,0){\includegraphics[width=\unitlength,page=1]{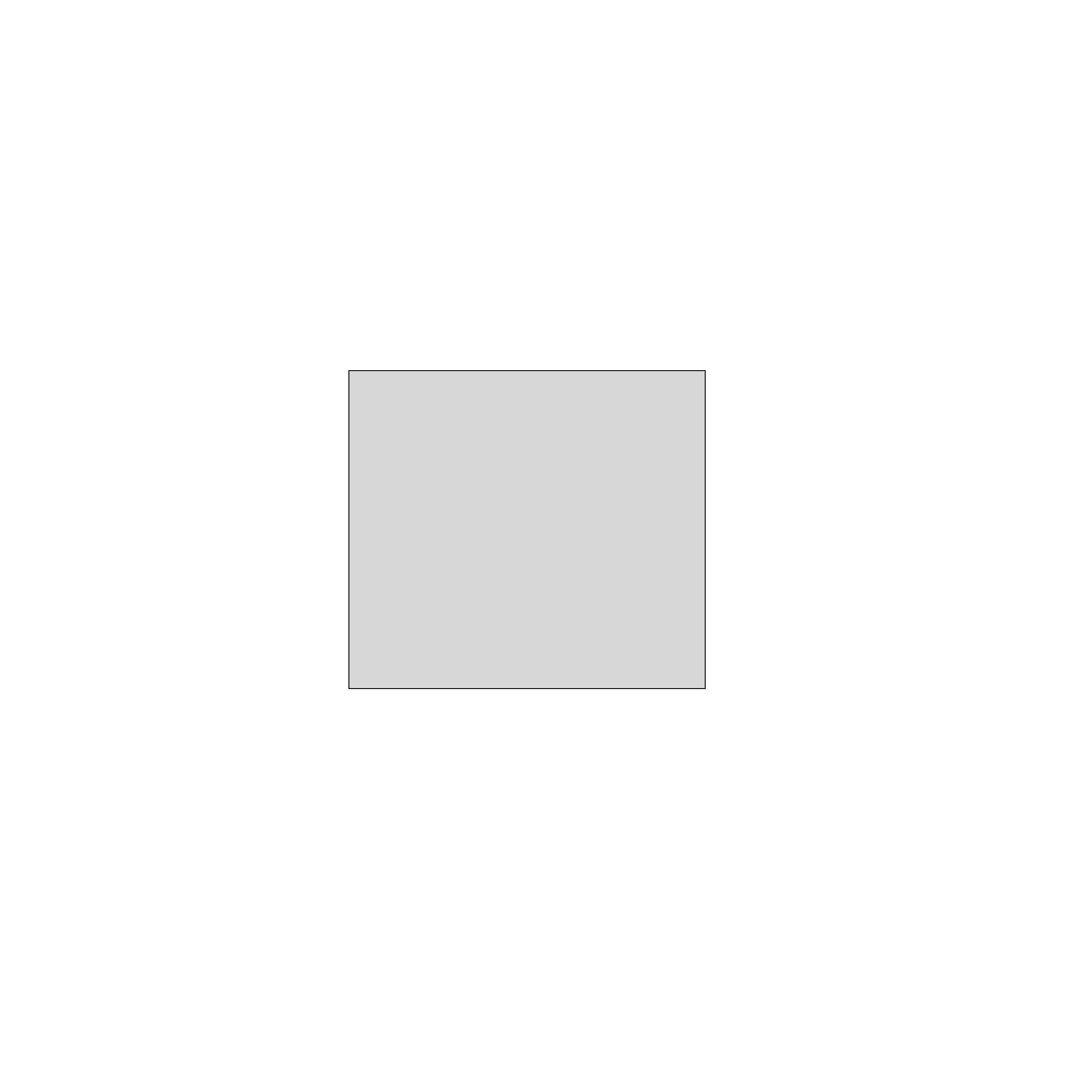}}%
    \put(0.3306678,0.38165846){\makebox(0,0)[lt]{\lineheight{1.25}\smash{\begin{tabular}[t]{l}$S$\end{tabular}}}}%
    \put(0,0){\includegraphics[width=\unitlength,page=2]{Images/lemma2_4_1.pdf}}%
    \put(0.0633739,0.79534941){\makebox(0,0)[lt]{\lineheight{1.25}\smash{\begin{tabular}[t]{l}$\color{red}K_0$\end{tabular}}}}%
    \put(0.80541435,0.92247348){\makebox(0,0)[lt]{\lineheight{1.25}\smash{\begin{tabular}[t]{l}$K$\end{tabular}}}}%
  \end{picture}%
\endgroup%

    \end{subfigure}
    \hglue 2 cm
    \begin{subfigure}[t]{0.3\textwidth}
    \def\svgwidth{\textwidth}
\begingroup%
  \makeatletter%
  \providecommand\color[2][]{%
    \errmessage{(Inkscape) Color is used for the text in Inkscape, but the package 'color.sty' is not loaded}%
    \renewcommand\color[2][]{}%
  }%
  \providecommand\transparent[1]{%
    \errmessage{(Inkscape) Transparency is used (non-zero) for the text in Inkscape, but the package 'transparent.sty' is not loaded}%
    \renewcommand\transparent[1]{}%
  }%
  \providecommand\rotatebox[2]{#2}%
  \newcommand*\fsize{\dimexpr\f@size pt\relax}%
  \newcommand*\lineheight[1]{\fontsize{\fsize}{#1\fsize}\selectfont}%
  \ifx\svgwidth\undefined%
    \setlength{\unitlength}{1133.85826772bp}%
    \ifx\svgscale\undefined%
      \relax%
    \else%
      \setlength{\unitlength}{\unitlength * \real{\svgscale}}%
    \fi%
  \else%
    \setlength{\unitlength}{\svgwidth}%
  \fi%
  \global\let\svgwidth\undefined%
  \global\let\svgscale\undefined%
  \makeatother%
  \begin{picture}(1,1)%
    \lineheight{1}%
    \setlength\tabcolsep{0pt}%
    \put(0,0){\includegraphics[width=\unitlength,page=1]{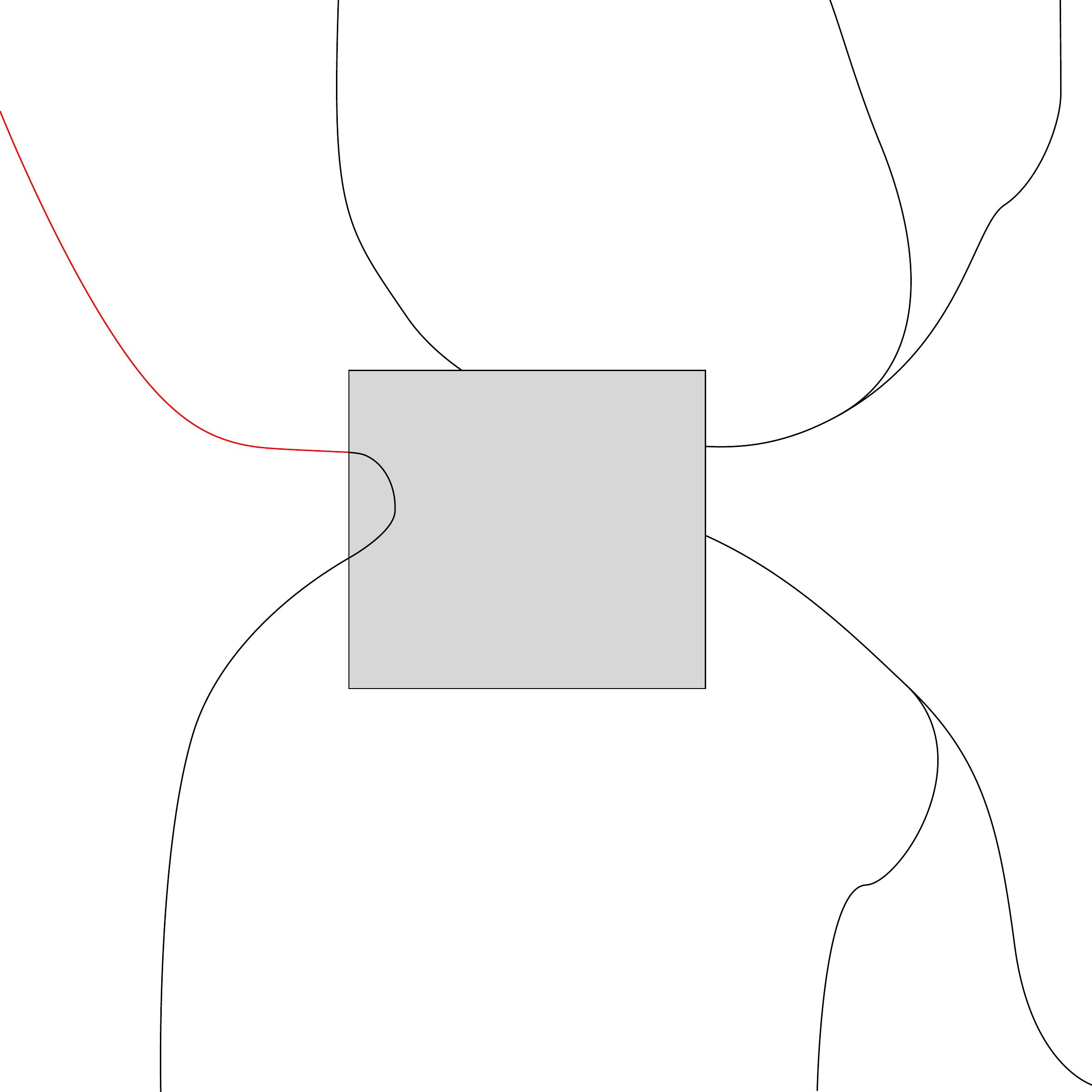}}%
    \put(0.0633739,0.79534941){\makebox(0,0)[lt]{\lineheight{1.25}\smash{\begin{tabular}[t]{l}$\color{red}K_0$\end{tabular}}}}%
    \put(0.3306678,0.38165846){\makebox(0,0)[lt]{\lineheight{1.25}\smash{\begin{tabular}[t]{l}$S$\end{tabular}}}}%
    \put(0.16194427,0.11102928){\makebox(0,0)[lt]{\lineheight{1.25}\smash{\begin{tabular}[t]{l}$K'$\end{tabular}}}}%
    \put(0,0){\includegraphics[width=\unitlength,page=2]{Images/lemma2_4_2.pdf}}%
  \end{picture}%
\endgroup%

    \end{subfigure}
\caption{In the setup of \Cref{cor: technical on graphs}, the configurations $\eta$ and $\eta'$ only differ in $S$. In the first, $K_0$ is part of the cluster $K$, in the second, it is part of $K'$.}
\end{figure}

\begin{remark}
\label{rem: weaker condition smallness instead of finiteness}
In this lemma, we ask for the difference between $\eta$ and $\eta'$ to be localized to a finite set. We could instead have the weaker requirement that, conditionally on $E$, for any $R\in\N$, there are only finitely many $x\in K_0$ such that $x$ is in the $R$-neighbourhood of $S$. This condition mirrors the smallness of $S$ in \Cref{lem:technical lemma for indis} for a random walk escaping to infinity in $K_0$.
The stronger assumption is easier to read; the weaker one will be needed in the case of models like corner percolation where it is impossible to make any finite modification.
\end{remark}

A very powerful tool on determistic (and random) graphs that we will need in the following sections is the \textbf{mass transport principle}, introduced to percolation theory in \cite{haggstromInfiniteClustersDependent1997}, developed further in \cite{benjamini1999group}.

\begin{lemma}[Mass transport principle]
Let $G$ be a Cayley graph of a finitely generated group $\Gamma$ and 
    let $\phi:G\times G\lora \R^+$ be diagonally invariant under the action of $\Gamma$, meaning that for any $g\in\Gamma$ and $x,y\in G$, we have $\phi(x,y)=\phi(g\cdot x,g\cdot y)$.
    Then $$\sum_{x\in G}\phi(x,o)=\sum_{y\in G}\phi(o,y).$$
\end{lemma}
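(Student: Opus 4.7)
The plan is to exploit the fact that $G$ is a Cayley graph of $\Gamma$, which means the vertex set is $\Gamma$ itself and the action $g\cdot x = xg^{-1}$ is simply transitive. Under diagonal invariance, I would rewrite every summand on the left-hand side so that its first argument becomes $o$, using a group-theoretic change of variables. The whole argument reduces to two lines.

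First, I would fix $x\in G=\Gamma$ and apply the invariance with $g:=x$: since $g\cdot x = xx^{-1}=e=o$ and $g\cdot o = ox^{-1} = x^{-1}$, we obtain
\[
\phi(x,o) \;=\; \phi(g\cdot x,\, g\cdot o) \;=\; \phi(o, x^{-1}).
\]
Summing over all $x\in\Gamma$ yields
\[
\sum_{x\in G}\phi(x,o) \;=\; \sum_{x\in\Gamma}\phi(o,x^{-1}).
\]
Then I would perform the substitution $y := x^{-1}$, which is a bijection of $\Gamma$ onto itself, to conclude
\[
\sum_{x\in\Gamma}\phi(o,x^{-1}) \;=\; \sum_{y\in\Gamma}\phi(o,y).
\]
Since all terms are non-negative, the rearrangement is unconditionally valid, and combining the two displays gives the desired equality.

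There is essentially no obstacle in this setting: the simple transitivity of the action of $\Gamma$ on its own Cayley graph (equivalently, the unimodularity that is automatic for groups) reduces the statement to a change of variable $y=x^{-1}$. The step one must be slightly more careful about in more general unimodular transitive graphs is that the stabilizer of $o$ may be nontrivial and the change of variables must be weighted by its Haar measure; but in the Cayley graph case this complication disappears because stabilizers are trivial.
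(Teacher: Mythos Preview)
Your proof is correct and is exactly the standard elementary argument for Cayley graphs: use diagonal invariance with $g=x$ to get $\phi(x,o)=\phi(o,x^{-1})$, then substitute $y=x^{-1}$. The paper does not actually supply a proof of this lemma; it merely states it and remarks that the result extends to unimodular transitive graphs and unimodular random networks, citing \cite{aldousProcessesUnimodularRandom2007}. So there is nothing to compare against beyond noting that your argument is the expected one-line proof in the Cayley graph setting, and your closing remark about nontrivial stabilizers in the general unimodular case is apt.
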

This property is true on a larger class of graphs called \textit{unimodular transitive graphs}, and on so called \textit{unimodular random graphs and networks} (see \cite{aldousProcessesUnimodularRandom2007}).

\subsection{Proofs of the key general tools}

We will prove the following more general version of \Cref{lem:technical lemma for indis}:
\begin{lemma}\label{lem:technical extended version}
    Let $\A$ be a component property and let $(G,\eta,o)$ be a random rooted network.
        Let $(v_n)$ be a stationary sequence started at $v_0=o$ on $(G,\eta,o)$, meaning that $(G,\eta,v_n)$ has the same distribution as $(G,\eta,o)$, and such that $v_n\in\Pcal_{G,\eta}(o)$ for every $n\in\N$. It can depend on $(G,\eta,o)$ and additional randomness.
    Furthermore, let $S_{G,\eta,o}$ be a $(G,\eta,o)$-measurable subset of $G$ and $\psi$ a random increasing function $\N\longrightarrow \N$ such that, almost surely:
    \begin{itemize}
        \item $\limsup_{n\rightarrow \infty}\frac n{\psi(n)} >0$,
        \item $S$ is ``smallish'' for $(v_{\psi(n)})$, in the following sense, a relaxed version of being ``small'' in \Cref{lem:technical lemma for indis}: for every $R\in\N$, $\limsup_{N\to\infty} \big|\big\{1\leq n\leq N : B(v_{\psi(n)},R) \cap S \not= \emptyset \big\}\big|/N = 0$.
    \end{itemize}

        Suppose there are $(G',\eta',o')$, $(v'_n)$, $S'_{G',\eta',o'}$ and $\psi':\N\longrightarrow\N$ 
         with all the same properties assumed in the previous paragraph for $(G,\eta,o)$, $(v_n)$, $S_{G,\eta,o}$ and $\psi$,
        and that they are coupled in such a way that on some event $\W$, for every $n\in\N$, the component $K$ of $v_{\psi(n)}$ in $G\setminus S_{G,\eta,o}$ and the component $K'$ of $v'_{\psi'(n)}$ in $G'\setminus S'_{G',\eta',o'}$ are isomorphic rooted networks with roots $v_{\psi(n)}$ and $v'_{\psi'(n)}$ and environments $\eta$ and $\eta'$.
    
    Then, conditionally on $\W$, up to probability zero events, $(G,\eta,o)\in\A$ if and only if $(G',\eta',o')\in\A$.
\end{lemma}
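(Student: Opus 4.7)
The plan is to establish that the rerooting-invariant indicator $h:=\mathbf{1}_\A$, viewed as a Borel function on $\mathcal G_*^L$, takes the same value at $(G,\eta,o)$ and $(G',\eta',o')$ almost surely on $\W$. Two ingredients drive the argument: (i) rerooting invariance of $h$ within $\Pcal_{G,\eta}(o)$, so that $h(G,\eta,v_{\psi(n)})=h(G,\eta,o)$ a.s.\ (and analogously in the primed system); and (ii) the fact that, on $\W$, the rooted labeled $R$-ball at $v_{\psi(n)}$ equals the one at $v'_{\psi'(n)}$ as a rooted labeled network for a density-$1$ set of $n$. Indeed, when both $B(v_{\psi(n)},R)\cap S=\emptyset$ and $B(v'_{\psi'(n)},R)\cap S'=\emptyset$ (a full-density event on $\W$ by the smallish hypotheses applied to both systems), both balls lie entirely inside $K$ and $K'$, which the prescribed isomorphism identifies root-to-root with labels preserved.

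To turn (ii) into information about $h$, I would first produce a single $R$-local approximation of $h$ valid under both marginal laws $\mu$ (the law of $(G,\eta,o)$) and $\mu'$ (the law of $(G',\eta',o')$). Setting $\lambda:=(\mu+\mu')/2$ and applying the martingale convergence theorem to $\mathbb{E}_\lambda[h\mid\sigma(B_R)]$, where $B_R$ denotes the map sending a rooted network to its rooted labeled $R$-ball, yields, for each $\e>0$, an $R$ and a function $\phi_R$ of rooted $R$-balls with $\mathbb{E}_\lambda[|h-\phi_R\circ B_R|]<\e/2$. Since $\mu,\mu'\le 2\lambda$ as measures, the same $\phi_R$ satisfies both $\mathbb{E}_\mu[|h-\phi_R\circ B_R|]<\e$ and $\mathbb{E}_{\mu'}[|h-\phi_R\circ B_R|]<\e$. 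This mixture-of-measures step is the key technical manoeuvre: it furnishes one common local proxy for $h$ under both laws.

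Combining (i) with stationarity, so that $(G,\eta,v_{\psi(n)})$ has law $\mu$, the above bound transfers to each $n$: $\mathbb{E}[\,|h(G,\eta,o)-\phi_R(B_R(G,\eta,v_{\psi(n)}))|\,]<\e$, and likewise for the primed system. For the Cesaro averages
\[
\Phi_R^N:=\tfrac{1}{N}\sum_{n=1}^N\phi_R(B_R(G,\eta,v_{\psi(n)})),\qquad \Psi_R^N:=\tfrac{1}{N}\sum_{n=1}^N\phi_R(B_R(G',\eta',v'_{\psi'(n)})),
\]
the triangle inequality yields $\mathbb{E}|h(G,\eta,o)-\Phi_R^N|<\e$ and $\mathbb{E}|h(G',\eta',o')-\Psi_R^N|<\e$ for every $N$. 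By (ii) and $0\le\phi_R\le 1$, on $\W$ one has
\[
|\Phi_R^N-\Psi_R^N|\le\frac{1}{N}\bigl|\{n\le N:B(v_{\psi(n)},R)\cap S\ne\emptyset\text{ or }B(v'_{\psi'(n)},R)\cap S'\ne\emptyset\}\bigr|,
\]
which tends to $0$ as $N\to\infty$ by the smallish hypothesis for both systems. Bounded convergence and the triangle inequality then give $\mathbb{E}[\,|h(G,\eta,o)-h(G',\eta',o')|\,\mathbf{1}_\W\,]\le 2\e$, whence the claim upon letting $\e\to 0$.

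The main obstacle I anticipate is the construction of a single $R$-local approximation $\phi_R$ good simultaneously under $\mu$ and $\mu'$: without this common proxy, matching of balls on $\W$ would not convert into matching of the approximations' values. The mixture $\lambda=(\mu+\mu')/2$ resolves this cleanly. A secondary technicality to verify is that the stationarity of $(v_n)$, stated as marginal equality with $(G,\eta,o)$, genuinely passes to the subsampled sequence $(v_{\psi(n)})$; this is presumably where the hypothesis $\limsup_{n\to\infty}n/\psi(n)>0$ together with the measurability of $\psi$ enter, guaranteeing that resampling along $\psi$ preserves the correct marginals.
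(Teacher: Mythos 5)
Your overall template---local approximation of $\A$, Cesaro averaging, and matching of $R$-balls via the coupling and the smallishness hypothesis---is close in spirit to the paper's proof, and your mixture-measure device $\lambda = (\mu+\mu')/2$ is a tidy idea for producing a single local proxy valid under both laws. However, there is a genuine gap at the step you flag as a ``secondary technicality'': the assertion that $(G,\eta,v_{\psi(n)})$ has law $\mu$ is false in general. Stationarity of $(v_n)$ does \emph{not} pass to a subsequence sampled at random times $\psi(n)$; the hypothesis $\limsup_n n/\psi(n) > 0$ controls only the density of the index set $\{\psi(n)\}_n$, not the distribution of the subsampled process. As a simple counterexample, take i.i.d.\ fair coin flips $X_n$ and let $\psi(n)$ be the $n$-th time $X$ equals $1$; then $X_{\psi(n)}\equiv 1$, yet $\limsup n/\psi(n)=1/2>0$. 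This invalidates the per-term bound $\E\big|h(G,\eta,o)-\phi_R(B_R(G,\eta,v_{\psi(n)}))\big|<\e$ on which your Cesaro averages $\Phi_R^N$, $\Psi_R^N$ rest, and the remainder of the argument does not close.

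The paper avoids this by applying Birkhoff's theorem to the \emph{full} stationary sequence $(v_n)$ and showing, via Markov's inequality plus Borel--Cantelli along $\e=2^{-n}$, that the density $D_{G,\eta,o}(A_{2^{-n}},(v_m))$ converges a.s.\ to $0$ or $1$ according to the type of $o$. The hypothesis $\limsup n/\psi(n)>0$ is then used for something else entirely: it makes the $\psi$-indexed times a positive-upper-density subset of $\N$, so that on $\W$ (where the $\psi$-indexed balls match and the smallish sets contribute zero density among them) the two full-sequence densities can differ by at most $1-\delta<1$; since both are $\{0,1\}$-valued in the limit, they must coincide. If you want to salvage your route, replace the $\psi$-subsampled averages with the full averages $\frac{1}{N}\sum_{n\le N}\phi_R(B_R(G,\eta,v_n))$---so that stationarity of $(v_n)$ applies directly---but note that the plain $L^1$ triangle inequality then gives only $\P(h\neq h',\ \W)\le(1-\delta)\P(\W)$, because merely a $\delta$-fraction of the terms are matched; you still need the paper's a.s.\ convergence to $\{0,1\}$-valued limits to upgrade the bound $1-\delta$ to $0$.
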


A useful aspect of the lemma is that one does not need to couple the full sequence $(v_n)$ with $(v_n')$, but only the subsequence $(v_{\psi(n)})$ with $(v'_{\psi'(n)})$.

 For simple random walk on a connected locally finite infinite graph, any finite set $S$ is smallish (because its stationary measure is finite, while the total measure is infinite). For an example of a smallish infinite set, take simple random walk on $\Z^3$, and $S$ the $x$-axis.

\begin{proof}[Proof of \Cref{lem:technical extended version}]
By the measurability of component properties, we can for any fixed $\e>0$ find a subset $A_\e$ of $\mathcal G^L_{*}$ that approximates $\A$ with error $\e$, in the sense that $\mu(\A\bigtriangleup A_\e)$< $\e$, 
where $\{(G,\eta,o)\in A_\e\}$ only depends on a finite ball of radius $R_\e$ around $o$ and the values of $\eta$ in this ball.

It immediately follows that $A_\e^c$ approximates with error $\e$ the property $\lnot \A$.

We define the {\bf density of $A_\e$ along $(v_n)$} in $(G,\eta,o)$ as
$$
D_{G,\eta,o}(A_\e,(v_n)):=\lim_{n\rightarrow\infty}\frac 1 n \sum_{i=0}^{n-1}\mathbb 1((G,\eta,v_n)\in A_\e).
$$

This limit exists (though might be random), thanks to Birkhoff's theorem.
Conditionally on $o=v_0$ having type $\A$, $(v_n)$ is still stationary because for any $n\in\N$, and any measurable $E$ in $\mathcal G^L_{*}$,
$\P( (G,\eta,o)\in E,\  (G,\eta,o)\in\A)=\P((G,\eta,v_n)\in E,\  (G,\eta,v_n)\in\A)$ by stationarity.
Applying Birkhoff's theorem again,
$$
\E\big(D_{G,\eta,o}(A_\e,(v_n))\mid (G,\eta,o)\in\A\big)=\P\big((G,\eta,o)\in A_\e\mid (G,\eta,o)\in\A\big)>1-\e.
$$
Using Markov's inequality, for any $k$ in $\N_+$,
\begin{equation}
\label{eq: Markov applied to density}
\P\big(D_{G,\eta,o}(A_\e,(v_n))>1-1/k\mid (\eta,v_0)\in\A\big)>1-k\e.
\end{equation}

Similarly, 
$$\E\big(D_{G,\eta,o}(A^c_\e,(v_n)) \mid (G,\eta,o)\notin\A\big)=\P((G,\eta,o)\in A^c_\e\mid (G,\eta,o)\notin\A)>1-\e.$$
Using again Markov's inequality,
$$
\P\big(D_{G,\eta,o}(A_\e,(v_n))<1/k\mid (G,\eta,o)\notin\A\big)>1-k\e.
$$

This means that, conditionally on $o$ having type $\A$, using Borel-Cantelli's lemma, there are almost surely only finitely many $n\in\N$ such that $D_{G,\eta,o}(A_{2^{-n}},(v_m))<1-1/k$.
Symmetrically, conditionally on $o$ having type $\lnot\A$, there are almost surely only finitely many $n\in\N$ such that $D_{G,\eta,o}(A_{2^{-n}},(v_m))>1/k$.
Since we have this for every $k\in\N_+$, we get that $(D_{G,\eta,o}(A_{2^{-n}},(v_m)))_n$ converges a.s.\ to 0 or 1, depending on the type of $o$ in $(G,\eta)$.

On the other hand, there exists by assumption a $\delta=\delta\big((G,\eta,o),(v_n),(G',\eta',o'),(v'_n)\big)>0$ such that we can find $l,l'$ arbitrarily large such that the proportion of elements of $(v_m)^l_{m=0}$ and $(v'_m)_{m=0}^{l'}$ that are in $(v_{\psi(m)})$ and $(v'_{\psi'(m)})$, respectively, is at least $\delta$.
The smallishness of $S$ implies that, for any $R$, the density of values $n\in\N$ such that
$v_{\psi(n)}$ is in an $R$-neighbourhood of $S_{G,\eta,o}$ is zero, and similarly for $v'_n$ and $S'_{G,\eta',o'}$. 
Hence the densities satisfy $|D_{G,\eta,o}(A_\e,(v_m))-D_{G',\eta',o'}(A_\e,(v'_m))|\leq 1-\delta$ almost surely on $\W$, for any $\e>0$ fixed. As both sequences $(D_{G,\eta,o}(A_{2^{-n}},(v_m)))_n$ and $D_{G',\eta',o'}(A_{2^{-n}},(v'_m)))_n$ converge to either 1 or 0, their limits are almost surely the same. Using the previous paragraph, this implies that $(G,\eta,o)$ and $(G',\eta',o')$ have almost surely the same $\A$-type conditionally on $\W$.
\end{proof}

To prove \Cref{cor: technical on graphs}, we will need two different types of stationary sequences in \Cref{lem:technical lemma for indis}; their stationarity is a special case of \cite[Theorem 4.1]{aldousProcessesUnimodularRandom2007}, but we give here a self-contained proof of both.

In the case of a Cayley graph $G$, a process $(v_n)_{n\in\N} \in G^\N$ is stationary if and only if $\P((\eta,v_n)\in B)=\P((\eta,v_m)\in B)$ for any $n,m\in\N$ and any $\Gamma$-invariant, measurable $B\subset L^G\times G$.

\begin{definition}
    We call \textbf{Simple Delayed Random Walk} (or \textbf{SDRW}) on some percolation configuration $\omega$ the Markov chain with the following transition probabilities:
    $$
p(x,y):=\begin{cases}
    \frac{1}{\deg_G(x)},&\text{if $x$ and $y$ are neighbours in $\omega$,}\\
    1-\frac{\deg_\omega(x)}{\deg_G(x)},&\text{if $x=y$,}\\
    0 &\text{else.}
\end{cases}
    $$
\end{definition}

In other words, the SDRW at $x$ chooses a random $G$-neighbour $y$ of $x$ uniformly, then travels to it if there is a percolation edge between $x$ and $y$ and stays on $x$ otherwise.
It will clearly always stay in the cluster of its starting point.

\begin{lemma}
             \label{lem: random permutation and SDRW are stationary}
             For any $\Gamma$-invariant random permutation $\pi$ on $G$, $(\pi^n(o))_{n\in\N}$ is stationary.
             
             For any invariant percolation $\omega$, the SDRW $(v_n)_{n\in\N}$ on $\omega$ is stationary.
         \end{lemma}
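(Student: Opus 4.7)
The plan is to prove both statements by the Mass Transport Principle, applied to appropriately chosen diagonally invariant mass transports. By the characterization recalled just above the lemma, it suffices to check that for every $n \in \N$ and every $\Gamma$-invariant measurable $B \subset L^G \times G$, one has $\P((\eta, v_n) \in B) = \P((\eta, o) \in B)$, where $\eta = \pi$ in the first case and $\eta = \omega$ in the second.

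For the permutation case, fix such $n$ and $B$ and consider the mass transport
\[
\phi(x, y) \;=\; \P\bigl((\pi, y) \in B,\ \pi^n(x) = y\bigr).
\]
First I would verify that $\phi$ is diagonally $\Gamma$-invariant: using the natural action $(g\cdot\pi)(z) = g\cdot\pi(g^{-1}\cdot z)$, the $\Gamma$-invariance of the law of $\pi$, and the $\Gamma$-invariance of $B$, the substitution $\pi' = g^{-1}\cdot\pi$ yields $\phi(g\cdot x, g\cdot y) = \phi(x, y)$. Then I would apply the MTP. On the $y$-side, $\sum_{y} \phi(o, y) = \P((\pi, \pi^n(o)) \in B)$ because for each realization of $\pi$ exactly one $y$ satisfies $\pi^n(o) = y$. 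On the $x$-side, $\sum_x \phi(x, o) = \P((\pi, o) \in B)$ because $\pi^n$ is a bijection, so exactly one $x$ satisfies $\pi^n(x) = o$. Equating the two sums gives stationarity at step $n$.

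For the SDRW case, let $P^n_\omega(x, y)$ denote the $n$-step transition probability of the chain in environment $\omega$, and set
\[
\phi(x, y) \;=\; \E\bigl[P^n_\omega(x, y)\,\mathbb 1_B(\omega, y)\bigr].
\]
Diagonal invariance of $\phi$ follows from the $\Gamma$-invariance of $\omega$, the $\Gamma$-invariance of $B$, and the translation equivariance $P^n_{g\cdot\omega}(g\cdot x, g\cdot y) = P^n_\omega(x, y)$, which is immediate from the translation-equivariant definition of the SDRW transition kernel. Applying the MTP, the $y$-side yields $\sum_y \phi(o, y) = \E[\mathbb 1_B(\omega, v_n)] = \P((\omega, v_n) \in B)$. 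For the $x$-side, the key ingredient is that the SDRW is reversible with respect to the uniform (counting) measure: since $\deg_G$ is constant on a Cayley graph, $p_\omega(x, y) = p_\omega(y, x)$, hence $P^n_\omega(x, o) = P^n_\omega(o, x)$. Therefore $\sum_x \phi(x, o) = \E[\mathbb 1_B(\omega, o)\sum_x P^n_\omega(o, x)] = \P((\omega, o) \in B)$, and the two sums agree.

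Both arguments are short once the correct mass transports are identified; the only delicate point is bookkeeping the group actions on $\pi$, $\omega$, and $B$ so that diagonal invariance is transparent. The extra ingredient beyond invariance, in each case, is what makes $\sum_x \phi(x, o)$ collapse to $\P((\eta, o) \in B)$: for permutations it is bijectivity of $\pi^n$, for SDRW it is the symmetry of the transition kernel. I expect no genuine obstacle; the main care is to state the $\Gamma$-action on permutations/percolations consistently with the convention $g\cdot x = xg^{-1}$ used in the paper.
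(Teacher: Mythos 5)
Your proof is correct and takes essentially the same approach as the paper: a mass-transport argument with a suitably chosen diagonally invariant transport function. The only cosmetic difference is that you treat all $n$ at once via the reversibility of the $n$-step kernel $P^n_\omega$ (respectively, bijectivity of $\pi^n$), whereas the paper proves the $n=1$ case and then invokes time-homogeneity of the Markov chain (respectively, iterates).
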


Recall that $G$ is a Cayley graph of $\Gamma$, hence we identify $\Gamma$ and the vertices of $G$.
         
    \begin{proof}
    First, the permutation $\pi$.
       Let $A$ be any event.
       We define the mass transport function function
       \begin{equation}
           \phi(x,y):=\P(y\cdot A,\pi^{-1}(y)=x).
       \end{equation}
This function is invariant under the diagonal action of $\Gamma$, by the $\Gamma$-invariance of $\pi$; we can then use the mass transport principle and obtain the following:
       \begin{align}
           \P(A)
           &=\sum_{x\in G} \P(A,\pi^{-1}(o)=x)\\
           &=\sum_{x\in G} \P(o\cdot A, \pi^{-1}(o)=x)\\
       &=\sum_{x\in G}\phi(x,o)
       =\sum_{y\in G} \phi(o,y)\\
           &= \sum_{y\in G} \P(y\cdot A, \pi^{-1}(y)=o)\\
           &= \P(\pi(o)\cdot A).
       \end{align}

Now, for the SDRW,
       let $A$ be any event and assume without loss of generality that $v_0=o$. Let us define the random function
    $$
    \phi(x,y):=\P(y\cdot A, y\sim_\omega x),
    $$
    where $x\sim_\omega y$ if $x$ and $y$ are neighbours in $\omega$.
    It is invariant under the diagonal action of $\Gamma$ by the $\Gamma$-invariance of $\omega$; we can then use the mass-transport principle and obtain the following:
    \begin{align*}
        \P(A)&=\sum_{x\in G}\P(A,v_1=x)\\
             &=\P(A,v_1=o)+        \frac 1 {\deg(o)} \sum_{x\in G} \P( o\cdot A,o\sim_{\omega }x)\\
             &=\P(A,v_1=o)+\sum_{x\in G} \phi(x,o)=\P(A,v_1=o)+\sum_{y\in G} \phi(o,y)\\
             &=\P(A,v_1=o)+\frac 1 {\deg(o)}\sum_{y\in G} \P(y\cdot A, y\sim_\omega o)\\
             &=\sum_{y\in G} \P(y\cdot A, v_1=y)\\
            &=\P(v_1\cdot A).
\end{align*}

The stationarity then follows from the fact that a SDRW is a time-homogeneous Markov chain.
   \end{proof}

We will first prove that if the assumptions of \Cref{cor: technical on graphs} are satisfied, then $K$ and $K'$ have the same number of ends. Recall that an infinite cluster in an invariant percolation on a Cayley graph can only be 1-, 2- or $\infty$-ended \cite{benjamini1999group}.

     If $K$ is $\infty$-ended, so is $K_0$, and thus so is $K'$. Indeed, assume for contradiction that $K_0$ is $1$ or $2$-ended while $K$ is $\infty$-ended with positive probability.
     Then $K$ would have an isolated end ($K_0$) which is impossible by \cite[Proposition 8.33]{LPbook}.

    Then, $K$ is one-ended if and only if $K'$ is one-ended as well.
    Indeed, according to Proposition 2.1 from \cite{bowen2022perfectmatchingshyperfinitegraphings}, an infinite cluster has linear growth if and only if it is two-ended.
    This means that if, say, $K$ is two-ended and $K'$ one-ended, then $K_0$ is one-ended, has superlinear growth and is included in $K$, contradicting the fact that $K$ has linear growth.

\begin{proof}[Proof of \Cref{cor: technical on graphs}]
We prove here the proposition under the slightly more general condition mentioned in \Cref{rem: weaker condition smallness instead of finiteness}.
We will construct for each possible number of ends a sequence that allows us to use \Cref{lem:technical lemma for indis} or its extension \Cref{lem:technical extended version}.
\medskip

Because $K$ and $K'$ have the same number of ends almost surely (as we concluded before this proof), we can rewrite $E$ as the disjoint union of $E\cap \{\text{$K$ and $K'$ have $i$ ends}\}$ for $i$ in $\{1,2,\infty\}$. These three events satisfy the assumptions of \Cref{cor: technical on graphs}, and if the lemma holds on these three events, then it holds for $E$.
We can thus assume that the number of ends of $K$ and $K'$ is an almost sure constant conditionally on $E$.

If $K$ and $K'$ are $\infty$-ended, we can use a reasoning as in \cite{lyonsIndistinguishabilityPercolationClusters1999}. Since $K_0$ is $\infty$-ended, as seen above, it is transient. 
Write $\partial_{V,\omega}^{in}K_0$ for the vertices of $K_0$ that have at least one $\omega$-neighbour outside of $K_0$, and let $B:=\partial_{V,\omega}^{in}K_0 \cup \partial_{V,\omega'}^{in}K_0$, an almost surely finite set. Clearly, $K_0\setminus B$ has at least one transient infinite component, call it $K_1$. The point of defining this $K_1$ is that for every $y\in K_1$, the neighbours of $y$ are the same in $\omega$ and $\omega'$. Now fix a $y\in K_1$, and take an arbitrary deterministic finite path in $K$ from $x$ to $y$, and an arbitrary finite path $P'$ in $K'$ from $x'$ to $y$. The SDRW from $y$ stays in $K_1$ with positive probability, and the trajectory in $K$ and $K'$, respectively, conditioned on this event, have the same distribution. So, on the event $E$, define the following coupling of the SDRW processes $(v_n)$ and $(v'_n)$. Start running them independently; if the starting segments are $P$ and $P'$, respectively, which happens with a positive probability, continue the coupling as follows; otherwise, continue independently. After $P$, sample the continuation of $(v_n)$ from $y$; if it stays in $K_1$, then give the same trajectory to the rest of $(v'_n)$ after $P'$; otherwise, continue $(v'_n)$ independently. Let $\W$ be the intersection of $E$ with the event that the above coupling was successful, i.e., that $(v_n)$ and $(v'_n)$ eventually stay in $K_1$ and coincide. Then we apply \Cref{lem:technical lemma for indis}.

\medskip

If $K$ and $K'$ are 2-ended, then $K_0$ is either one or two-ended.
The strategy is the same in both cases: we will extract an invariantly chosen, locally defined bi-infinite path that goes ``from one end of $K$ to the other'', and send $(v_n)$ along this path in a random direction.

To be more precise, for every cluster $\C$, choose uniformly one end to be direction 1 and the other to be direction 2.
    Remember that for any $y$ in a two-ended cluster $\mathcal C$, there is a minimal integer $\kappa(y)$ such that $\mathcal C\setminus B(y,\kappa(y))$ has two infinite connected components.
    We write $\kappa(\mathcal C):=\min_{y\in\mathcal C}\kappa(y)$. 
    By unimodularity, there are almost surely infinitely many $y$ in $\C$ with $\kappa(y)=\kappa(\C)$, infinite in both directions.
    We gather these in a set we call $\mathcal B(\mathcal C)$, and assign independently to each $y$ in $\mathcal B(\mathcal C)$ an independent random variable $u(y)$ uniform in $[0,1]$.
    If we then write $$\mathcal B'(\mathcal C):=\{x\in\mathcal B(\mathcal C)\mid \forall y \in \mathcal B(\mathcal C)\cap B(x,\kappa(\mathcal C)),\ u(x)<u(y)\},$$
    this is an infinite set, and has the following property:
    for any $y\in\mathcal B'(\mathcal C)$, every other vertex $z$ in $\mathcal B'(\mathcal C)$ is  in one of the two infinite parts of $\C\setminus B(y,\kappa(\C))$.
    Write $y\preceq_\C z$ if $z$ is in the part that contains direction 2, and $z\preceq_\C y$ if the reverse is true. 
    It is easy to see that this order is isomorphic to the usual ordering of $\Z$.

    We can then define a random permutation $\pi$, jointly invariant with $\eta$, by taking $\pi(y)=y$ if $y$ is not in a two-ended cluster, while $\pi(y)=z$ with $z$ being the successor of $y$ for $\preceq_\C$ if $y$ is in a two-ended infinite cluster $\C$.

    Then define $v_n:=\pi^n(x)$, construct the same way $\pi'$ on $\eta'$ using the same random variables $u(y)$, and define $v'_n:=(\pi')^n(x)$.
    These are stationary, transient random walks on $K$ and $K'$, and because they are locally defined, if they both enter and never leave $K_0$ after some time, they are asymptotically equal.
Using again the assumption on $S$ in \Cref{rem: weaker condition smallness instead of finiteness}, we can thus use \Cref{lem:technical lemma for indis} to finish the proof for this case.
\medskip

The case in which $K$ and $K'$ are both one-ended is more involved. We will use the SDRW, as in the $\infty$-ended case, but, as it is not necessarily transient on $K_0$, we will have to use~\Cref{lem:technical extended version}.

First note that $D:=(K\setminus K_0)\cup (K'\setminus K_0)$ is finite, otherwise $K$ or $K'$ would have at least two ends by the assumption on $S$ in \Cref{rem: weaker condition smallness instead of finiteness}, with the infinite $K_0$. (Nevertheless, $S$ still could be infinite.) Now, changing finitely many vertices and edges in a graph cannot change whether its unique infinite component is recurrent or transient for SDRW (e.g., because recurrence is equivalent to having infinite effective resistance to infinity \cite[Theorem 2.11]{LPbook}), thus either $K_0,K,K'$ are all recurrent, or all are transient. In the transient case, the proof is identical to the $\infty$-ended case handled above, hence we will assume recurrence.

Next we will construct a finite subgraph $D_1$ that contains $D$ and such that 
\begin{enumerate}
\item $K|_{D_1}$ and $K'|_{D_1}$ are connected,
\item $K\setminus D_1=K'\setminus D_1\subset K_0$ is connected, and 
\item for every vertex in $K\setminus D_1$, its boundary vertices in $K$ are the same as in $K'$.
\end{enumerate}
We have observed that $D$ consists of finitely many components and they are all finite. Take a one-ended spanning tree $T_0$ in $K_0$, and 
choose a finite subtree $t$ of $T_0$ whose complement in $T_0$ is connected and that contains every vertex of $K_0$ that is at distance at most 2 from some vertex of $D$ in $K\cup K'$.
Let $D_1$ be the graph induced by $t\cup D$ in $G$. Now, item 1 holds because $K|_{D_1}$ consists of $t$ and the components of $ K\setminus K_0$ together with edges connecting them to $t$ (and possibly some further edges on these vertices). Similarly for $K'|_{D_1}$. Item 2 holds because $T_0\setminus t$ is a connected spanning subgraph of $K\setminus D_1$. Finally, item 3 is true because $K\setminus D_1$ is not adjacent to $D$ (by the choice of $t$), hence all its external boundary vertices are in $K_0 \subseteq K\cap K'$.




Call a pair of edges $(e_1,e_2)$, where $e_1$ and $e_2$ are both edges with one endpoint in $D_1$ and one endpoint outside of it, an \textbf{excursion type}: an excursion of a SDRW has this type if it starts through $e_1$, ends through $e_2$ and stays inside $K\setminus D_1=K'\setminus D_1$ otherwise. SDRW on $K$ and on $K'$ spend a 0 upper density amount of time in $D_1$ because $D_1$ is finite and SDRW on an infinite graph is null-recurrent. The parts of these two SDRW's that are outside of $D_1$ consist of excursions whose distributions only depend on their types, and hence these distributions are the same for both SDRW's. But the frequency of a certain type may differ depending on whether we take SDRW on $K$ or on $K'$. There are finitely many excursion types, hence there is an $\e>0$ and types $\tau$ and $\tau'$ such that at least an $\e$ upper density of steps of SDRW on $K$ is inside an excursion of type $\tau$  almost surely, and at least an $\e$ upper density of steps of SDRW on $K'$ is inside an excursion of type $\tau'$  almost surely. Note furthermore that every type occurs infinitely often in both SDRW's almost surely, by the properties of $D_1$.


We will use
\Cref{lem:technical extended version}, with the coupling between $\eta$ and $\eta'$ coming from the assumptions of our proposition. The necessary coupling of a SDRW $(v_n)$ in $\omega$ started at $x$ and a SDRW $(v'_n)$ in $\omega'$ started at $x'$ will be constructed next. 
First let $(w_n)$ be SDRW in $K$ started at $x$ and $(w'_n)$ be an independent SDRW in $K'$ started at $x'$. There is an $N(1)$ so that the proportion of time that $(w_1,\ldots,w_{N(1)})$ spends in type $\tau$ excursions is greater than $\e$. Choose such an $N(1)$ so that $w_{N(1)}$ is the last step of the $\ell(1)$'th excursion of type $\tau$, with some $\ell(1)$.
For $j=1,2\ldots, \ell(1)$, take the $j$th excursion of type $\tau$ in $(w'_n)$, and replace it with the $j$'th excursion of type $\tau$ in $(w_n)$. (We keep notation $(w'_n)$ for the sequence resulting after the replacements, and note that it is still distributed as SDRW in $\omega'$.) Now, the first $\ell(1)$ excursions of type $\tau$ are the same in $(w_n)$ and in  $(w'_n)$. We define the first few values of $\psi$ by listing the vertices $w_{\psi(1)},w_{\psi(2)},\ldots,w_{\psi (M(1))}$ in these excursions in their consecutive order (with some $M(1)$ which is the total number of vertices visited by these $\ell(1)$ excursions). Then define $\psi'(i)$ as $i=1,\ldots,M(1)$ to be equal to the index of the vertex that got replaced by
$w_{\psi(i)}$ in $(w'_n)$.

Next we repeat the procedure with the roles of $(w_n)$ and $(w'_n)$ switched, and starting after indices $\psi (M(1))$ and $\psi' (M(1))$ respectively. That is,
there is an $N(2)$ such that the proportion of time that $(w'_1,\ldots,w'_{N(2)})$ spends in type $\tau'$ excursions is greater than $\e$. Choose $N(2)$ so that $w_{N(2)}$ is the last step of an excursion of type $\tau'$. Let $\ell(2)$ be the number of excursions of type $\tau'$ after time $\psi' (M(1))$. As $j=1,2\ldots, \ell(2)$, take the $j$th
of these excursions, and replace the $j$th excursion of type $\tau'$ in
$(w_n)$ after time $\psi (M(1))$
by it. As $i=M(1)+1,\ldots,M(2)$ (with some $M(2)$ which is the total number of vertices visited by the mentioned excursions), define $\psi'(i)$ (respectively $\psi(i)$) to be the index of the $i-M(1)$'th vertex in the consecutive listing of these type $\tau'$ excursions in $(w'_n)$ (respectively $(w_n)$).

Continue alternatingly, in countably many steps, always redefining $(w_n)$ at even steps and $(w'_n)$ at odd steps, on later and later subsegments. This will always keep their distributions unchanged. Call the limiting sequences $(v_n)$ and $(v'_n)$. The functions $\psi$ and $\psi'$ list the places in $(v_n)$ and respectively $(v'_n)$ where an excursion got identified in the two sequences, hence
in \Cref{lem:technical extended version} the condition on the rooted isomorphisms around $v_{\psi(n)}$ and $v'_{\psi'(n)}$ is satisfied. Smallishness holds because of null-recurrence. Finally, $\limsup_{n\rightarrow \infty}\frac n{\psi(n)} >\e$ and $\limsup_{n\rightarrow \infty}\frac n{\psi'(n)} >\e$, because of how the $M(i)$ and $N(i)$ were chosen.
\end{proof}

\begin{remark}
    For the one-ended case, we could also have taken inspiration from \cite{hutchcroft2017indistinguishability} by taking a sequence of uniform points in a hyperfinite exhaustion instead of a SDRW.
\end{remark}

\section{Applications\label{sec:proofs}}

\subsection{The interchange process}

\begin{proof}[Proof of \Cref{t.interchange}]
    Assume for contradiction that there exists a component property $\A$ such that $\A$ and $\lnot\A$ type infinite cycles coexist with positive probability.
    In particular, there exist two vertices $x$ and $y$ such that the event $E_0$ that their cycles are infinite and $x$ has type $\A$ and $y$ type $\lnot\A$ has positive probability.
    Choose a minimal length deterministic path $P$ between $x$ and $y$.
    As every clock $\psi_e$ rings almost surely finitely many times, there exists $\rho>0$ such that,  conditionally on $E_0$, the event that for every edge $e$ with at least one endpoint on $P$, the clock $\psi_e$ does not ring during the time interval $[\beta-\rho,\beta]$ has positive probability.
    We write $E_1$ for its intersection with $E_0$.

    Let $(\psi^1_e)$ be i.i.d.\ unit-rate Poisson point processes indexed by the edges of $G$ and $\pi_1$ the induced permutation.
    We then define $(\psi^2_e)$ by $\psi^2_e:=\psi^1_e$ for $e$ not in $P$;
    for $e$ in $P$, $$\psi^2_e(t):=\begin{cases}
        \psi^1_e(t),&\text{if } t<\beta-\rho,\\
        \psi^1_e(\beta-\rho)+\xi_e(t-\beta+\rho),&\text{else,}
    \end{cases}$$
where $\xi_e$ is a unit-rate Poisson process, independent of everything else.
Write $\pi_2$ for the permutation induced by $\psi^2$.

In essence, $\psi^2$ is $\psi^1$ with the clocks on the edges of $P$ resampled independently in the time interval $[\beta-\rho,\beta]$.
Conditionally on $\psi^1$ being in $E_1$, by independence, there is a positive probability that the particles on $x$ and $y$ at time $\beta-\rho$ are swapped at time $\beta$, while every other particle on a vertex of $P$ at time $\beta-\rho$ is still there at time $\beta$.
If this is the case, then $\pi_2=\tau_{x,y}\circ\pi_1$, where $\tau_{x,y}$ is the transposition that swaps $x$ and $y$.

Let $v_n:=\pi_1^n(y)$ and $v'_n:=\pi_2^n(y)$ for $n\in\N$.
They are both stationary sequences, by \Cref{lem: random permutation and SDRW are stationary}.
Write $\W$ for the event that $\psi^1$ is in $E_1$ and that $\pi_2=\tau_{x,y}\circ\pi_1$.
On this event, $(v_n)$ and $(v'_n)$ are equal: if $\pi^n_1(y)$ was equal to $x$ or $y$ for some $n$, then either the cycle of $y$ in $\pi_1$ would be finite or it would contain $x$.
Furthermore, $\psi^1$ and $\psi^2$ differ only in $P$, which is small for $(v_n)$ and $(v'_n)$ on $\W$ (as the cycle of $y$ is infinite).
We thus can use \Cref{lem:technical lemma for indis} to show that $y$ has the same $\A$-type in $\psi^1$ and $\psi^2$.

On the other hand, let $w_n:=\pi_1^{-n}(x)$ and $w'_n:=\pi_2^{-n}(y)$ for $n\in\N$.
They are also both stationary sequences.
On $\W$, $(w_n)$ and $(w'_n)$ are asymptotically equal as $w'_{n}=w_n$ for every $n\geq1$:
$\pi_2^{-1}(y)=\pi_1^{-1}\circ\tau_{x,y}(y)=\pi_1^{-1}(x)$, and by the same reasoning as above, $\pi_1^{-n}(x)\notin\{x,y\}$ for every $n\geq1$.
Once again, $P$ is small for  $(w_n)$ and $(w'_n)$ conditionaly on $\W$ and we can use \Cref{lem:technical lemma for indis} to show that $x$ in $\psi^1$ and $y$ in $\psi^2$ have the same $\A$-type.

Combining these two results, $x$ and $y$ have the same $\A$-type in $\psi^1$ conditionally on $\W$, contradicting its definition.
\end{proof}

\subsection{The loop $O(n)$ model}
A {\bf  loop configuration} on $G$ is any subgraph of $G$ in which every vertex has degree 0 or 2.
Such a subgraph is composed of a disjoint mixture of bi-infinite paths and closed, non-intersecting finite length loops.

For $F$ a finite subgraph of $G$, $\xi$ a loop configuration on $G$ and two parameters $x>0,n>0$, we define the following measure on the set of loop configurations $\omega$ that coincide with $\xi$ outside of $F$:
$$
\P^{n,x}(\omega):=\frac{x^{|\omega|}n^{l(\omega)}}{Z^{\xi,F}_{n,x}}
$$
where $|\omega|$ is the number of edges in $\omega\cap F$, $l(\omega)$ is the number of connected components of $\omega$ intersecting $F$ and $Z^{\xi,F}_{n,x}$ is the unique constant making $\P^{\xi,F}_{n,x}$ a probability measure.

If $\omega$ is a fixed configuration coinciding with $\xi$ outside of $F$ and $\gamma$ a finite loop in $F$ of length $k$ such that $\omega \bigtriangleup \gamma$ is still a loop configuration,
$$
\P^{\xi,F}_{n,x}(\omega\bigtriangleup\gamma)=\P^{\xi,F}_{n,x}(\omega)x^{|\omega\bigtriangleup\gamma|-|\omega|}n^{l(\omega\bigtriangleup\gamma)-l(\omega)}.
$$

If we flip in $\omega$ the edges of $\gamma$ one by one, each flip adds or removes an edge and can at most add or remove a connected component (note here that at least half of these intermediate steps won't be loop configurations).
This implies that $-k\leq |\omega\bigtriangleup\gamma|-|\omega|\leq k$ and $-k\leq l(\omega\bigtriangleup\gamma)-l(\omega)\leq k$ and 
$$
\min(x,1/x)^k\min(n,1/n)^k\leq\frac{\P^{\xi,F}_{n,x}(\omega\bigtriangleup\gamma)}{\P^{\xi,F}_{n,x}(\omega)}\leq \max(x,1/x)^k\max(n,1/n)^k.
$$
These bounds only depend on the length of $\gamma$ and the fixed parameters $x$ and $n$.

Let $\omega$ have law $\P_{n,x}$, any translation-invariant infinite-volume Gibbs measure on the loop configurations on $G$ that is the thermodynamic limit of $\P^{\xi,F_m}_{n,x}$ when $m$ goes to infinity, $F_m$ being an exhaustion of $G$.
Let then $\gamma$ be a finite loop and $\tilde \omega$ be a loop configuration constructed by taking $\omega$ and resampling independently the edges in $\gamma$ such that it too has law $\P_{n,x}$.
Then conditionally on the event that $\omega\bigtriangleup\gamma$ is a loop configuration, there is a positive probability that $\tilde\omega$ is exactly $\omega\bigtriangleup\gamma$.

\begin{lemma}
    \label{lem: grafting in O(n)}
    Let $G$ be an amenable transitive graph, $\A$ a component property and $\omega$ a random loop configuration with law $\P_{n,x}$.
If infinite component of type $\A$ and $\lnot\A$ s can coexist with positive probability, then
there exists with positive probability a collection of finite loops $\gamma_0,\ldots,\gamma_k$ (not necessarily contained in $\omega$) and two biinfinite paths $p_1$ and $p_2$ contained in $\omega$ with type $\A$ and $\lnot\A$ such that $\omega\bigtriangleup\gamma_0\bigtriangleup\ldots\bigtriangleup\gamma_k$ is a loop configuration with a connected component having infinite intersections with both $p_1$ and $p_2$.
\end{lemma}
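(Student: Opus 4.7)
The plan is to produce, with positive probability, a local \emph{crossing--swap} surgery between two coexisting bi-infinite paths of different $\A$-types, realized by a single cycle XOR in the simplest case, with auxiliary clearing XORs used only if the direct construction is obstructed.

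First I would locate two coexisting paths near each other. By the hypothesis, the $\Gamma$-invariant events ``$o$ lies on a bi-infinite path of type $\A$'' and ``$o$ lies on a bi-infinite path of type $\lnot\A$'' both have positive $\P_{n,x}$-probability. Since $G$ is amenable and $\P_{n,x}$ is $\Gamma$-invariant, a mass-transport argument together with Følner averaging produces some $R>0$ and vertices $x_1,x_2$ with $d(x_1,x_2)\le R$ such that the event $E$ that $x_1$ lies on a type-$\A$ bi-infinite path $p_1$ and $x_2$ lies on a type-$\lnot\A$ bi-infinite path $p_2$ has positive probability.

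Second, I would realize the grafting as a single simple cycle in $G$. On $E$, fix an edge $e=x_1u_1$ of $p_1$ and an edge $f=x_2u_2$ of $p_2$. Suppose one can find in $G$ two vertex-disjoint simple paths $Q_1$ from $x_1$ to $x_2$ and $Q_2$ from $u_1$ to $u_2$ whose interior vertices all have $\omega$-degree $0$. Then $\gamma:=\{e\}\cup Q_1\cup\{f\}\cup Q_2$ is a simple cycle in $G$, and a direct degree check shows that $\omega\bigtriangleup\gamma$ is again a loop configuration: each of $x_1,u_1,x_2,u_2$ trades one $\omega$-edge (in $p_1$ or $p_2$) for one $\gamma$-edge and stays at degree $2$, while each interior vertex of $Q_1,Q_2$ moves from degree $0$ to degree $2$. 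In $\omega\bigtriangleup\gamma$, the half-infinite ray of $p_1$ on the $x_1$-side is spliced through $Q_1$ to the half-infinite ray of $p_2$ on the $x_2$-side, yielding a bi-infinite path whose intersections with both $p_1$ and $p_2$ are infinite. This gives the desired conclusion with $k=0$ and $\gamma_0:=\gamma$.

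The main obstacle is proving that the detours $Q_1,Q_2$ exist with positive probability. Here amenability of $G$ is essential: for any $\Gamma$-invariant loop configuration $\omega$ with positive density of ``free'' vertices (vertices of $\omega$-degree $0$), the free subgraph inherits from $G$ enough large-scale connectivity to route vertex-disjoint detours of bounded length in a suitable region. I would make this precise by a Følner-set averaging which, combined with $E$, gives a positive conditional probability of finding a crossing corridor of free vertices inside some ball around $\{x_1,x_2\}$. If inside such a ball some finite $\omega$-loops block every direct corridor, I would first XOR with those offending finite loops as $\gamma_0,\dots,\gamma_{k-1}$: each such operation simply deletes the loop from $\omega$ and keeps the configuration valid, freeing a corridor in which the merging cycle $\gamma_k:=\gamma$ from the previous paragraph can then be applied. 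The positive-probability claim of the lemma then follows by combining the positive probability of $E$ with the positive conditional probability of the corridor configuration.
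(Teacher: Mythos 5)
Your construction of the XOR cycle $\gamma=\{e\}\cup Q_1\cup\{f\}\cup Q_2$ is correct as far as the degree bookkeeping goes, and the overall blueprint --- locate nearby paths of opposite $\A$-type, clear finite loops, splice --- is the same as the paper's. But your argument hinges on the existence of the corridors $Q_1,Q_2$ through $\omega$-free vertices, and that step is not justified; in fact, as stated it can fail. The assertion that ``for any $\Gamma$-invariant loop configuration with positive density of free vertices, the free subgraph inherits enough large-scale connectivity to route vertex-disjoint detours of bounded length'' is simply not a theorem: free vertices might all be isolated, they might not have positive density at all (in a nearly fully-packed configuration), and even after deleting finite loops the remaining bi-infinite paths --- including $p_1,p_2$ re-entering the ball, and third-party infinite paths --- can still separate $x_1$ from $x_2$ inside any bounded region. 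Your ``XOR away offending finite loops'' fallback only removes finite-loop obstacles, so the case where the corridor is blocked by other bi-infinite paths is left unaddressed, and there is no way to remove such an obstruction by XOR-ing with finite loops.

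The paper avoids needing any free-vertex corridor at all. It picks $d$ to be the \emph{minimal} distance between an $\A$-vertex and a $\lnot\A$-vertex (so a length-$d$ connecting path automatically meets no infinite component other than at its two endpoints, by minimality), deletes all finite loops, adds length-$d$ connectors between well-separated ``maximal label'' pairs in an invariant way to form a random graph $\omega'$ of maximum degree three, and then invokes the Burton--Keane trifurcation argument: if $\omega'$ contained no cycle, it would be an invariant forest with a positive density of trifurcations, which is impossible on an amenable transitive graph. Amenability enters there, as a forcing argument for the existence of a cycle, not as a connectivity property of the free subgraph. Once a cycle $\gamma_0$ in $\omega'$ is found, the $\gamma_1,\ldots,\gamma_k$ are taken to be the finite $\omega$-loops that $\gamma_0$ crosses, and the lemma's claim follows by the degree bookkeeping you already have. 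That Burton--Keane step is the missing idea in your proposal: without it there is no reason the connecting paths you want exist with positive probability.
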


\begin{proof}
The main idea of this proof is that if we remove every finite loop from $\omega$, and then add finite disjoint paths between nearby infinite paths with the opposite $\A$-type in an automorphism-invariant way, then there are two options for the resulting invariant percolation $\omega'$. One possibility is that we created cycles in $\omega'$, in which case we take one of them to be $\gamma_0$, and we let $\gamma_1,...,\gamma_k$ be the finite loops in $\omega$ that $\gamma_0$ crosses. The symmetric difference $\omega\bigtriangleup\gamma_0\bigtriangleup\ldots\bigtriangleup\gamma_k$ removes $\gamma_1,...,\gamma_k$ then takes the symmetric difference of the infinite paths with $\gamma_0$, creating the desired loop configuration.
The other possibility would be that we did not create any cycles, meaning that $\omega'$ is an invariant forest in which every vertex has a positive probability to have degree 3, which is in contradiction with the amenability of $G$ following the classic Burton-Keane argument \cite{burtonkeane}.

\begin{figure}[htbp]
    \centering
    \includegraphics[width=0.8\textwidth]{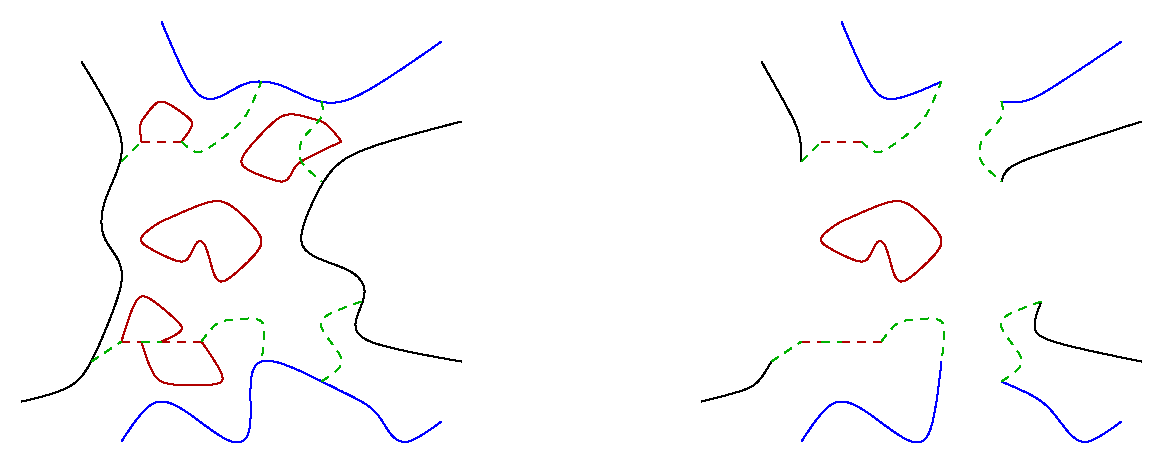}
    \caption{Creating $\omega\bigtriangleup\gamma_0\bigtriangleup\ldots\bigtriangleup\gamma_k$: infinite paths in $\omega$ with different $\A$-types are black and blue, respectively, finite loops are dark red, the disjoint finite connecting paths are dashed.}
    \label{fig:O(n)Amen}
\end{figure}

Let us start by building $\omega'$.
Let $(\xi_v)_{v\in G}$ be i.i.d.\ random variables, uniform in $[0,1]$, and let $d$ be the minimal distance between a vertex in an infinite path of type $\A$ and a vertex in an infinite path of type $\lnot\A$.
For every pair $x,y$ of two vertices in $G$, define $Q(x,y)\subset E(G)$ as follows: if
\begin{itemize}
    \item $x$ has type $\A$, $y$ has type $\lnot\A$, 
    \item $\xi_x>\xi_{x'}$ and $\xi_y>\xi_{y'}$ for any $x'\in B(x,3d)$ of type $\A$ and any $y'\in B(y,3d)$ of type $\lnot\A$,
    \item the distance between $x$ and $y$ in $G$ is $d$,
\end{itemize}
then take $Q(x,y)$ to be the set of edges of a uniformly and independently chosen path of length $d$ between $x$ and $y$, and the empty set else.

Define $\omega':= (\omega\setminus L)\cup \bigcup_{x,y\in G}Q(x,y)$, where $L$ is the set of edges that are in a finite loop in $\omega$.
By construction, $\omega'$ has no finite components.
Furthermore, for any $x$ in $G$, there is clearly almost surely at most one $y$ such that $Q(x,y)$ is nonempty, and for every nonempty $Q(x,y)$, it intersects almost surely only two infinite components of $\omega$, each at exactly one vertex --- otherwise, there would be a path of length strictly less than $d$ between an $\A$ and a $\lnot\A$ cluster, contradicting the definition of $d$.
This means that every vertex in $\omega'$ has degree at most 3.

We will first prove that if $\omega'$ admits a finite cycle, then we can take $\gamma_0$ equal to such a cycle and $\gamma_1,\ldots,\gamma_k$ to be all the finite loops in the original configuration $\omega$ that have at least one vertex in common with $\gamma_0$, and that this collection of loops satisfies the statement of the lemma.
It is easy to see that $\bar\omega:=\omega\bigtriangleup\gamma_0\bigtriangleup\ldots\bigtriangleup\gamma_k$ is in fact a loop configuration.

We now show there are two infinite paths $p_1$ and $p_2$ with types $\A$ and $\lnot \A$ such that there is an infinite path in $\bar \omega$ with infinite intersections with both $p_1$ and $p_2$.
We can assume without loss of generality that $\gamma_0$ goes through every infinite path in $\omega$ at most once --- i.e., that for every infinite path $p$ in $\omega$, the intersection $\omega\cap p$ is connected.
Indeed, if $p = \cdots,v_{-1},v_0,v_1,\cdots$, we can find a maximal $i_M$ such that $\gamma_0$ contains $v_{i_M}$.
Then we can define $\gamma_0'$ by following $\gamma_0$ from $i_M$ in the direction that goes away from $p$ until it hits $p$ again, at which point $\gamma_0'$ can follow $p$ in the appropriate direction until it hits $i_M$.
We then take $\gamma_0'$ in place of $\gamma_0$.
This procedure can be repeated until $\gamma_0$ goes through every infinite path at most once, as it gets strictly shorter every time the procedure is applied. 

So, there exists a $p_1 = \cdots,v_{-1},v_0,v_1,\cdots$ and $i_M$ such that, as before, $i_M$ is the highest value of $i$ such that $v_i$ is in $\gamma_0$.
If we follow the component of $v_{i_M}$ in $\bar\omega$, it is equal to $v_{i_M+1},v_{i_M+2},\cdots$ in one direction, and in the other, it goes through a finite number of edges that are not in an infinite component of $\omega$ before reaching $p_2$, another infinite component of $\omega$, and then following it for the rest of its existence.

By construction of $\omega'$, $p_1$ and $p_2$ have different $\A$-types, and the component of $v_{i_M}$ in $\bar\omega$ has infinite intersections with both $p_1$ and $p_2$, as desired. Thus, to prove our lemma, we are left to prove that $\omega'$ indeed contains a cycle almost surely.

Suppose for contradiction that $\omega'$ does not contain a cycle.
The argument is similar to the classic Burton-Keane argument for the uniqueness of the infinite cluster in Bernoulli percolation on amenable transitive graphs \cite{burtonkeane}.
We call a vertex $v$ a trifurcation if its connected component $C$ in $\omega'$ satisfies: $C\setminus\{\{v,v'\}\mid v'\sim_Gv\}$ has exactly three connected components.
If we assume for contradiction that $\omega'$ does not contain any cycle with positive probability, then there are two possibilities: either every connected component of $\omega'$ is an infinite path of $\omega$ with probability 1, or $\omega'$ contains a trifurcation with positive probability.

We can easily rule out the first option: by definition, there is a positive probability for $p$ and $p'$, two infinite paths of $\omega$ of type $\A$ and $\lnot \A$, respectively, to be at distance $d$ from one another.
By independence, there is a positive probability that when constructing $\omega'$, we add a finite path starting from $p$ and ending in $p'$.
The endpoints of said finite path are both trifuractions in $\omega'$, if $\omega'$ does not contain any cycle.

Then, because $G$ is countable and transitive, every vertex has a probability $\delta>0$ to be a trifurcation in $\omega'$, which is impossible in an amenable transitive graph by the Burton-Keane argument.
\end{proof}

\begin{proof}[Proof of \Cref{thm: O(n) indistinguishability}]
   Assume for contradiction that $\A$ and $\lnot\A$ type clusters can coexist with positive probability.
   Then, using \Cref{lem: grafting in O(n)}, there exist vertices $x$ and $y$ and a finite set of edges $S$ (the edges of $\gamma_0,\dots,\gamma_k$) such that if $\omega$ has law $\P_{n,x}$ and $\omega'$ has law $\P^{\omega,S}_{n,x}$, there is a positive probability of the event $\W$ that $x$ has type $\A$ and $y$ has type $\lnot\A$ in $\omega$, and that the cluster of $y$ in $\omega'$ has an infinite intersection with both the clusters of $x$ and $y$ in $\omega$.
   Then all the assumptions of \Cref{cor: technical on graphs} hold, and we can apply it to deduce conditionally on $\W$ that $x$ in $\omega$ and $y$ in $\omega'$ have the same $\A$-type, and that $y$ in $\omega$ and $y$ in $\omega'$ have the same $\A$-type, hence $x$ and $y$ have the same $\A$-type conditionally on $\W$ in $\omega$, a contradiction. 
\end{proof}

\subsection{Corner percolation}\label{s.corner}
        We work on the graph $\Z^2$ with its usual lattice structure and its edge set $\mathsf E$.
Its vertices are partitioned into $\Z^2_e:=\{(x,y)\in\Z^2\mid x+y\in 2\Z\}$ and $\Z^2_o:=\{(x,y)\in \Z^2\mid x+y \in 2\Z+1\}$.

Let $\Omega:=\{-1,1\}^\Z\times \{-1,1\}^\Z$. For any  $(\xi,\eta)\in\Omega$, we define a percolation configuration $\omega \in 2^{\mathsf E}$ as follows:
\begin{itemize}
    \item a vertical edge $\{(x,y),(x,y+1)\}$ is open (i.e., in $\omega$) if and only if $(x,y)\in\Z^2_o$ and $\xi_x=1$ or if $(x,y)\in\Z^2_e$ and $\xi_x=-1$
    \item a horizontal edge $\{(x,y),(x+1,y)\}$ is open if and only if $(x,y)\in\Z^2_o$ and $\eta_x=1$ or if $(x,y)\in\Z^2_e$ and $\eta_x=-1$.
    \end{itemize}

    We choose two parameters $p,q\in(0,1)$, $(p,q)\neq (1/2,1/2)$ and
let $(\xi_n)_{n\in\Z}$ be a sequence of i.i.d.\ random variables with $\P(\xi_0=1)=p$ and $\P(\xi_0=-1)=1-p$, $(\eta_n)_{n\in\Z}$ a sequence of i.i.d.\ random variables with $\P(\eta_0=1)=q$, $\P(\eta_0=-1)=1-q$.
Corner percolation is then the configuration $\omega$ resulting from this $(\xi,\eta)$.

    It is known from \cite{cornerPerco2025} that there is then almost surely an infinite number of infinite clusters, and every infinite cluster is almost surely an infinite path with asymptotic slope $\frac {2p-1}{1-2q}$:
    \begin{theorem}[\cite{cornerPerco2025}]
        \label{thm:asymptotic slope}
        For every $\e>0$ and any $x$ in the cluster of $o$, with probability 1 there exists $R>0$ such that one of these three events is true: either the distance between $o$ and $x$ is less than $R$, or $q\neq 1/2$ and the slope of the straight line going through $o$ and $x$ is in $\big[\frac{2p-1}{1-2q}-\e, \frac{2p-1}{1-2q}+\e\big]$, or $q=1/2$ and the slope of the straight line going through $o$ and $x$ is in $(-\infty,-1/\e]\cup[1/\e,+\infty)$.
    \end{theorem}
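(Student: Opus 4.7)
The plan is to analyze the bi-infinite path through $o$ by a law-of-large-numbers argument applied to its $X$- and $Y$-coordinates. First, I note that at every vertex of $\Z^2$ exactly one horizontal and one vertical edge is open, so each vertex has degree $2$ and each connected component is either a finite simple cycle or a bi-infinite self-avoiding path. Parameterize the cluster of $o$ as $(\gamma_k)_{k\in\Z}$ with $\gamma_0 = o$, and write $\gamma_k = (X_k, Y_k)$. Since the two open edges at each vertex have different types, the edges $e_k = \{\gamma_k, \gamma_{k+1}\}$ along the path alternate horizontal and vertical; in particular, all horizontal edges start from vertices sharing the parity of $o \in \Z^2_e$.

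Unpacking the edge rules: at an even-parity vertex $(x,y)\in\Z^2_e$ the open horizontal edge points right iff $\eta_y=-1$, hence $\Delta X = -\eta_y$, and at an odd-parity vertex $(x,y)\in\Z^2_o$ the open vertical edge points up iff $\xi_x=1$, hence $\Delta Y = \xi_x$. Fixing the direction along the path so that $e_0$ is horizontal, the horizontal steps all start from $\Z^2_e$ and the vertical steps all from $\Z^2_o$, giving
\[
X_{2n} - X_0 \;=\; -\sum_{k=0}^{n-1}\eta_{Y_{2k}}, \qquad Y_{2n} - Y_0 \;=\; \sum_{k=0}^{n-1}\xi_{X_{2k+1}}.
\]
The key estimate is a mass-transport/ergodic bound: over a box $B_L=[-L,L]^2$, the sum of $\Delta X$-contributions from even-parity vertices equals $-\sum_{(x,y)\in B_L\cap\Z^2_e}\eta_y$, which by the classical LLN for the i.i.d.\ sequence $(\eta_y)_{y\in\Z}$ is asymptotic to $|B_L\cap\Z^2_e|\cdot(1-2q)$. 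A symmetric computation handles $(\xi_x)_x$ and gives the corresponding mean $2p-1$ per vertical step.

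By stationarity of $(\gamma_k)_{k\in\Z}$ under the shift $k\mapsto k+1$ (via either the Palm construction on the unique bi-infinite path, or the unimodular-random-graph framework of \cite{aldousProcessesUnimodularRandom2007}), Birkhoff's ergodic theorem transfers the spatial averages above to path averages, yielding $X_{2n}/n \to 1-2q$ and $Y_{2n}/n \to 2p-1$ almost surely as $|n|\to\infty$. For $x = \gamma_k$ in the cluster of $o$, the slope of the line through $o$ and $x$ equals $Y_k/X_k$; when $q\neq 1/2$ this a.s.\ converges to $(2p-1)/(1-2q)$, giving the first alternative after choosing $R$ large enough in terms of $\e$ and the realization of the path. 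When $q = 1/2$, the horizontal drift vanishes while $2p-1\neq 0$ by $(p,q)\neq(1/2,1/2)$, and combining the LLN for $Y_k$ with the law-of-the-iterated-logarithm-type bound $|X_k|=O(\sqrt{k\log\log k})$ for the zero-mean walk yields $|Y_k/X_k|\to\infty$ a.s., which is the second alternative.

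The main obstacle in this plan is rigorously transferring the LLN for $(\eta_y)_{y\in\Z}$ (an i.i.d.\ sequence on $\Z$) to averages along the path (where the same $\eta_y$ is reused at every revisit of row $y$ and the visit counts depend non-trivially on both $\xi$ and $\eta$). The cleanest workaround is exactly the Palm/unimodular framework, which gives stationarity of the step sequence for free, so that a single mass-transport equation pins down the Birkhoff limits to $1-2q$ and $2p-1$ without ever having to analyze the joint law of the visit-count profile $(N_y)_{y\in\Z}$ and the labels $(\eta_y)_{y\in\Z}$.
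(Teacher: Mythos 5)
The paper does not actually give a proof of this statement: it is cited wholesale from \cite{cornerPerco2025}, so there is no internal argument to compare against. However, the height function $H$ is introduced by the paper immediately after the theorem statement, and it is precisely the tool that makes the proof clean; your proposal avoids it, and that is where the trouble lies.

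Your bookkeeping of the increments is correct (with the reading that horizontal edges are governed by $\eta_y$ rather than $\eta_x$, which fixes an evident typo in the paper): the horizontal step out of an even vertex contributes $-\eta_y$ with mean $1-2q$, and the vertical step out of an odd vertex contributes $\xi_x$ with mean $2p-1$. But the ``transfer'' step is a genuine gap, and the Palm/unimodular workaround you propose does not close it. First, the shift $k \mapsto k+1$ on $(\gamma_k)$ is not measure-preserving when $q \neq 1/2$, since the edge rule distinguishes parities, so one must use $k \mapsto k+2$ and translations in $\Z^2_e$. More importantly, stationarity plus Birkhoff only yields
$\tfrac1n\sum_{k<n}(-\eta_{Y_{2k}}) \to L$
for some shift-invariant random variable $L$ with (unconditional) mean $1-2q$; it does \emph{not} give $L = 1-2q$ almost surely. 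Indeed, on finite clusters (which carry positive probability) the cyclic sum of $\Delta X$ is zero, so $L=0$ there, whence $\E[L \mid o \text{ in infinite cluster}] = (1-2q)/\P(o \text{ in infinite cluster}) \neq 1-2q$. The slope would still come out to $(2p-1)/(1-2q)$ \emph{if} one could show $L_X$ and $L_Y$ are almost surely constant and $L_X \neq 0$ on the infinite-cluster event, since the common factor cancels in the ratio; but constancy of a cluster functional is exactly the indistinguishability statement the paper proves \emph{using} \Cref{thm:asymptotic slope}, so that route is circular. The same issue affects the $q=1/2$ case: the LIL bound you invoke for $|X_k|$ holds for i.i.d.\ sums, not for the reused, path-dependent increments along the cluster.

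The argument that sidesteps all of this is the height-function identity: for every even $(n,m)$ in the cluster of $o$ one has $X_n + Y_m$ constant (twice the cluster height), where now $X_n = \sum_{i\le n}\xi_i$ and $Y_m = \sum_{j\le m}\eta_j$ are \emph{genuine} i.i.d.\ random walks. Applying the ordinary LLN (and, for $q=1/2$, the LIL) directly to $(\xi_i)$ and $(\eta_j)$ along this conserved level set gives $(2p-1)n + (2q-1)m = O(\sqrt{|n|} + \sqrt{|m|})$, from which the asymptotic slope and the degenerate $q = 1/2$ case both follow with no dependency issue to untangle.
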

    
    We borrow the height function from \cite{peteCornerPercolationMathbb2008a}: define two independent random walks on $\Z$ as 
    \begin{align*}
        X_0=0,\qquad \text{for }n>0,\  X_n={\sum_{i=1}^n\xi_i}, \qquad \text{and for } n<0,\  X_n=-\sum_{i=n+1}^0\xi_i;\\
        Y_0=0, \qquad  \text{for }n>0,\ Y_n={\sum_{j=1}^n\eta_j}, \qquad \text{and for } n<0,\ Y_n=-\sum_{j=n+1}^0\eta_j;
\end{align*}
and the height function $$H(n+1/2,m+1/2):=\begin{cases} \frac{X_n+Y_m} 2,&\text{if $(n,m)\in\Z^2_e$}\\
    \frac{X_n+Y_m+1}2,&\text{if $(n,m)\in\Z^2_o$}.
\end{cases}
$$

The important property of the height function that will be useful for us later is that for every cluster $\mathcal C$, there is $h\in\mathbb Z$ such that if $x\in \mathcal C\cap \Z^2_e$, then $H(x+(1/2,1/2))=h$.
We call this the height of $\C$.
Another way to see the height is to color every square of $\Z^2$ in black or white according to whether the bottom-left corner is even or odd, respectively. Then, 
two squares without an edge between them have the same height, and
when an edge of percolation separates a black and a white square, the black square has always height 1 less than the white square.

It is shown in \cite{cornerPerco2025} that (when $(p,q)\neq (1/2,1/2)$), for every $h$ in $\Z$, there exists almost surely a unique infinite cluster with height $h$ --- the height function is a bijection between $\Z$ and the infinite clusters.

\begin{figure}[hbtp]
    \centering
    \begin{subfigure}{0.45\textwidth}
    \includegraphics[width=\textwidth]{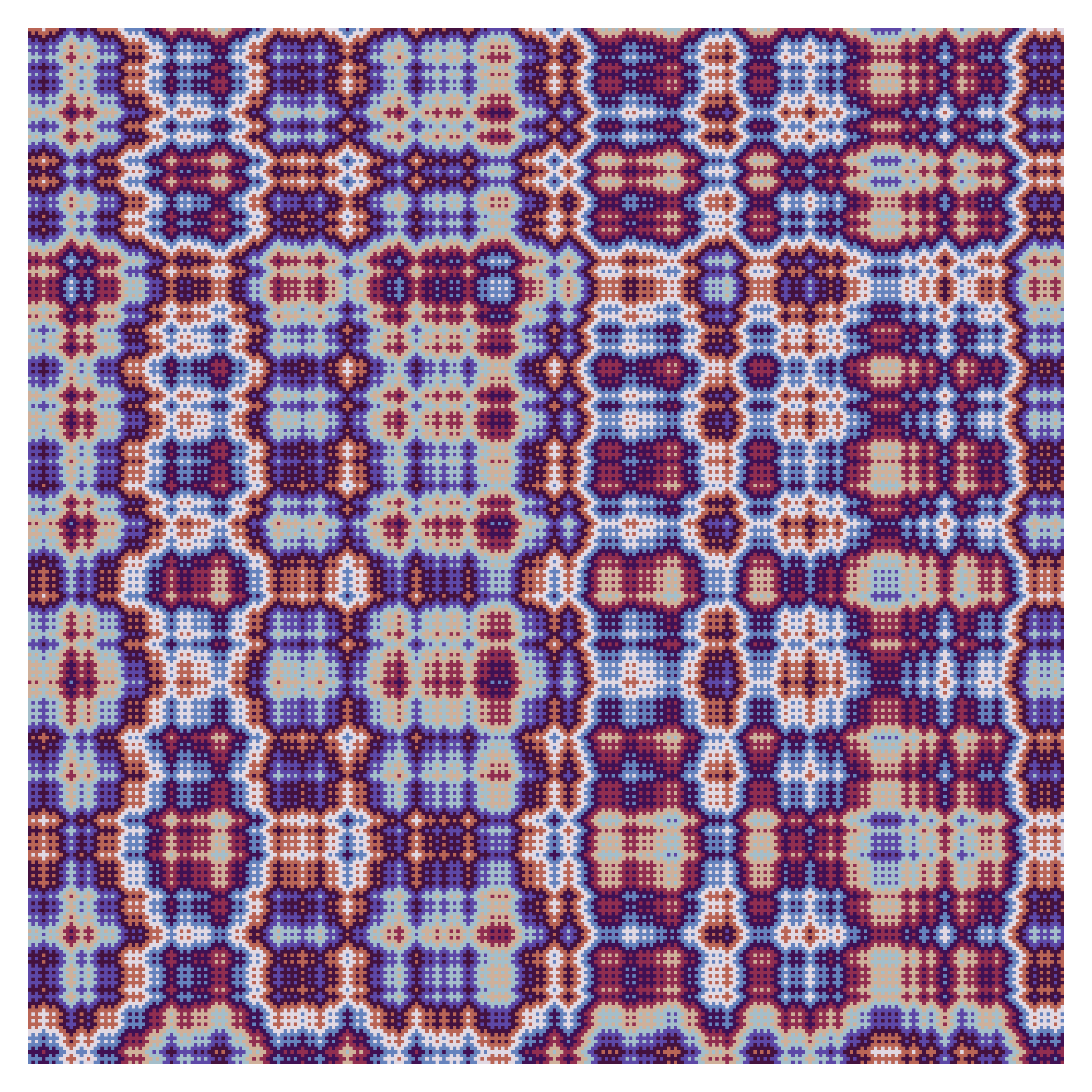}
    \end{subfigure}
    \hfill
    \begin{subfigure}{0.45\textwidth}
        \includegraphics[width=\textwidth]{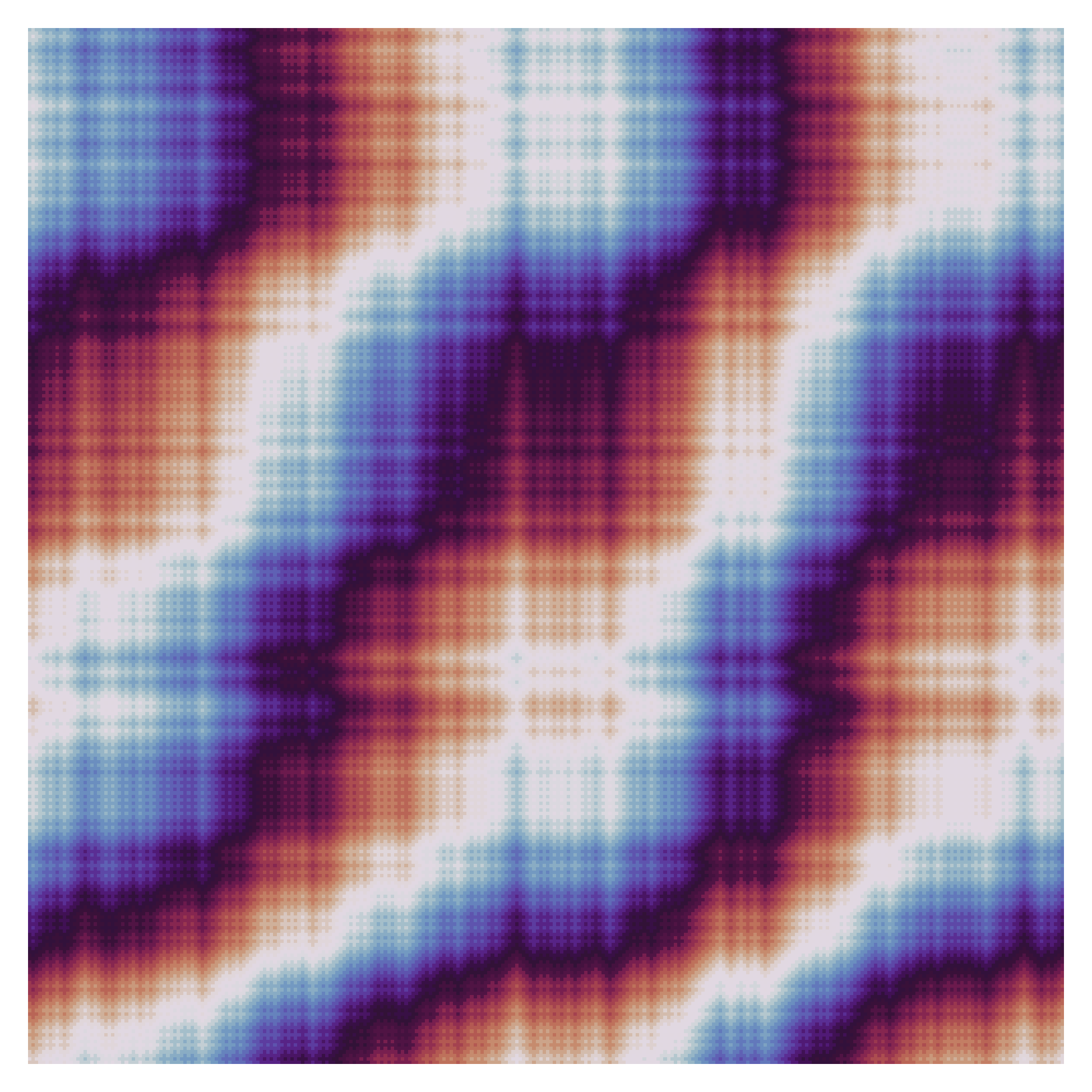}
    \end{subfigure}
    \caption{Corner percolation samples with parameters $(0.5,0.5)$ and $(0.7,0.3)$. Two vertices with the same height have the same color.}
\end{figure}

We deduce from this the following lemma, which will be useful later. In the rest of this section, we write $a_1$ and $a_2$ for the two coordinates of a vertex $a$.
\begin{lemma}
\label{cor: parity is recoverable}
     If $q\neq 1/2$, and we say that the positive direction in a cluster is the one in which the first coordinate goes to $+\infty$, then for every even vertex $v$ that belongs to an infinite cluster, the edge going in the positive direction out of $v$ is almost surely horizontal if $q>1/2$ and almost surely vertical if $q<1/2$.
\end{lemma}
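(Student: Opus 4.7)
The plan is to compare the two traversal directions of the bi-infinite path that is the cluster of $v$, and use an ergodic average of the horizontal displacements to detect which one has $x\to+\infty$.

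Write the cluster of $v$ as $(v_k)_{k\in\Z}$ with $v_0=v$. Since each vertex of $\omega$ has exactly one horizontal and one vertical edge, consecutive edge types along the path alternate; and since every edge flips the parity of $x+y$, the parities of $v_k$ alternate too, so $v_{2k}$ is always even. Hence out of $v$ one traversal direction (call it $A$) begins with a horizontal edge and the other (call it $B$) begins with a vertical edge, and in direction $A$ the horizontal edges of the path sit exactly at the even vertices $v_0,v_2,\ldots$, whereas in direction $B$ they sit at the odd vertices.

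At an even vertex in row $y$ the signed $x$-displacement of the open horizontal edge is a deterministic function of $\eta_y$, and at an odd vertex in the same row it is the opposite (this is what makes $\omega$ into a $2$-regular corner configuration in the first place). Reading off the signs, the expected per-horizontal-step $x$-displacement is $1-2q$ along direction $A$ and $2q-1$ along direction $B$ (with the overall assignment of $A$/$B$ to the $\pm$ signs fixed by the edge convention of corner percolation). To upgrade these expectations into almost-sure asymptotics, uniformly pick at random one of the two orientations of each two-ended infinite cluster; this defines a $\Gamma$-invariant random permutation on $\Z^2$ whose cycles are the oriented clusters, so \Cref{lem: random permutation and SDRW are stationary} yields that $(v_k)_{k\ge 0}$ is stationary. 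Combined with the ergodicity of the i.i.d.\ product environment $(\xi,\eta)$, Birkhoff's theorem gives that after $2n$ steps the total $x$-displacement is $n(1-2q)+o(n)$ almost surely in direction $A$ and $n(2q-1)+o(n)$ almost surely in direction $B$.

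Therefore, one of the two directions out of $v$ has $x\to+\infty$ when $q<1/2$ and the other when $q>1/2$, and matching which direction starts with a horizontal versus a vertical edge yields the dichotomy of the lemma. The main obstacle is the ergodic step: the values $\eta_{y_{2k}}$ seen along the horizontal steps are not manifestly i.i.d., since the $y_{2k}$ are themselves functions of $(\xi,\eta)$ (we could revisit the same row many times, especially in the regime where the vertical walk is recurrent); the stationarity produced by \Cref{lem: random permutation and SDRW are stationary} together with the product-measure ergodicity of the environment is what allows us to invoke Birkhoff's theorem and turn the naive expected-step computation into an almost-sure statement about the direction of the path.
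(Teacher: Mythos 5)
Your alternation observation is correct and is a genuine simplification: since each vertex of $\omega$ is incident to exactly one horizontal and one vertical open edge, edge types alternate along the path, vertex parities alternate, and therefore the lemma reduces to establishing a single base case (one even vertex whose positive-direction edge is horizontal, respectively vertical). The step you use to settle the base case, however, has a genuine gap.

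You invoke Birkhoff for the stationary process $(\eta,v_k)_{k\ge 0}$ built from a uniformly random orientation. Birkhoff gives convergence of the Ces\`aro averages to the conditional expectation given the shift-invariant $\sigma$-algebra of that walk process, and \emph{that} $\sigma$-algebra is not trivial here: it contains the Ces\`aro limit itself, which is a nonconstant random variable that distinguishes the two orientations (for a two-ended cluster the average $x$-increment has opposite signs in the two directions), and it also contains which cluster the walk lives in. The $\Gamma$-ergodicity of the i.i.d.\ environment $(\xi,\eta)$ is a statement about the translation action on $\{\pm 1\}^{\Z}\times\{\pm 1\}^{\Z}$, not about the time-shift on the sequence $(\eta,v_k)_k$; it does not yield the ergodicity you need, and in fact proving ergodicity of the cluster process is exactly the kind of statement the paper is working towards, so assuming it here is essentially circular. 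Even granting ergodicity, what Birkhoff would give is the expectation averaged over the random orientation, which is $0$ conditionally on an infinite cluster and says nothing about each direction separately.

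There is also a more concrete warning sign that the naive local expectation is misleading. At an even vertex $(x,y)$ the open horizontal edge goes right iff $\eta_y=-1$, which has probability $1-q$; so in the direction whose horizontal steps sit at even vertices, the per-step expectation over the marginal of $\eta$ is $1-2q$, which is negative when $q>1/2$. Yet the lemma (and the paper's proof) says precisely that for $q>1/2$ this is the \emph{positive} direction, i.e.\ the direction in which $x\to+\infty$. So the true Ces\`aro average of the horizontal increments in that direction is positive, with the opposite sign from the naive mean. The explanation is that the rows visited by the positive direction at its even vertices are biased towards $\eta=-1$, against the marginal; your argument has no mechanism to produce this bias, and assuming away the correlation would yield the wrong sign.

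The paper's proof is global rather than local: it takes the first crossing $e_\tau$ of the column $x=0$ in the positive direction, uses \Cref{thm:asymptotic slope} and a finite surgery on the $\xi$'s to show that the $y$-parity of such a crossing is the same for all infinite clusters, and then uses the height function (in particular that heights of infinite clusters are in bijection with $\Z$ and are read off along the $y$-axis as a subsequence of a transient biased random walk) to pin down which parity occurs. That global structure, not a pointwise ergodic average, is what determines the base case.
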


\begin{proof}
        Let us prove the case $q>1/2$; the other case is easily handled by symmetry.
        The condition $q\neq 1/2$ guarantees that every infinite cluster almost surely crosses the $y$ axis.

        Let us take a cluster $\C$ and label its edges $(\dots ,e_{-1},e_0,e_1,\dots)$, writing $a^i$ and $b^i$ for the endpoints of $e_i$, both labellings chosen such that \begin{itemize}
            \item $a^{i+1}=b^i$,
            \item $\lim_{i\rightarrow \infty} b^i_1 = \lim_{i\rightarrow \infty} a^i_1=\infty$.
        \end{itemize}
        Let $\tau$ be the smallest $i$ such that $b^i_1=1$ (it exists thanks to \Cref{thm:asymptotic slope}).
        It is clear that $e_\tau$ is horizontal, and that $a^\tau_1=0$.  
        
        We will now show that $a_2^\tau=b^\tau_2$ is even, and thus $a^\tau$ is even, proving the statement of the lemma for $a^\tau$. The rest of the lemma follows from an easy induction on $a^i$.

        Let $\C'$ be another infinite cluster, with edges $(\dots,f_{-1},f_0,f_1,\dots)$ labelled like in $\C$, where $f_i$ has endpoints $c^i,d^i$, and $c^{i+1}=d^i$, and $c^i_1$ and $d^i_1$ go to $+\infty$ with $i$.
        Define $\tau'$ like $\tau$ as the smallest $i$ such that $d^i_1=1$.
        Note that $a^\tau_2$ and $c^{\tau'}_2$ have the same parity.
        Indeed, if it was not the case, one could construct an infinite cluster in which both directions have first coordinates going to $-\infty$ by changing finitely many $\xi_i$, contradicting \Cref{thm:asymptotic slope}, as demonstrated in \Cref{fig: e_tau parity}.

\begin{figure}[hbtp]
    \centering
     \begin{subfigure}[t]{0.3\textwidth}
    \def\svgwidth{\textwidth}
\begingroup%
  \makeatletter%
  \providecommand\color[2][]{%
    \errmessage{(Inkscape) Color is used for the text in Inkscape, but the package 'color.sty' is not loaded}%
    \renewcommand\color[2][]{}%
  }%
  \providecommand\transparent[1]{%
    \errmessage{(Inkscape) Transparency is used (non-zero) for the text in Inkscape, but the package 'transparent.sty' is not loaded}%
    \renewcommand\transparent[1]{}%
  }%
  \providecommand\rotatebox[2]{#2}%
  \newcommand*\fsize{\dimexpr\f@size pt\relax}%
  \newcommand*\lineheight[1]{\fontsize{\fsize}{#1\fsize}\selectfont}%
  \ifx\svgwidth\undefined%
    \setlength{\unitlength}{675bp}%
    \ifx\svgscale\undefined%
      \relax%
    \else%
      \setlength{\unitlength}{\unitlength * \real{\svgscale}}%
    \fi%
  \else%
    \setlength{\unitlength}{\svgwidth}%
  \fi%
  \global\let\svgwidth\undefined%
  \global\let\svgscale\undefined%
  \makeatother%
  \begin{picture}(1,1)%
    \lineheight{1}%
    \setlength\tabcolsep{0pt}%
    \put(0,0){\includegraphics[width=\unitlength,page=1]{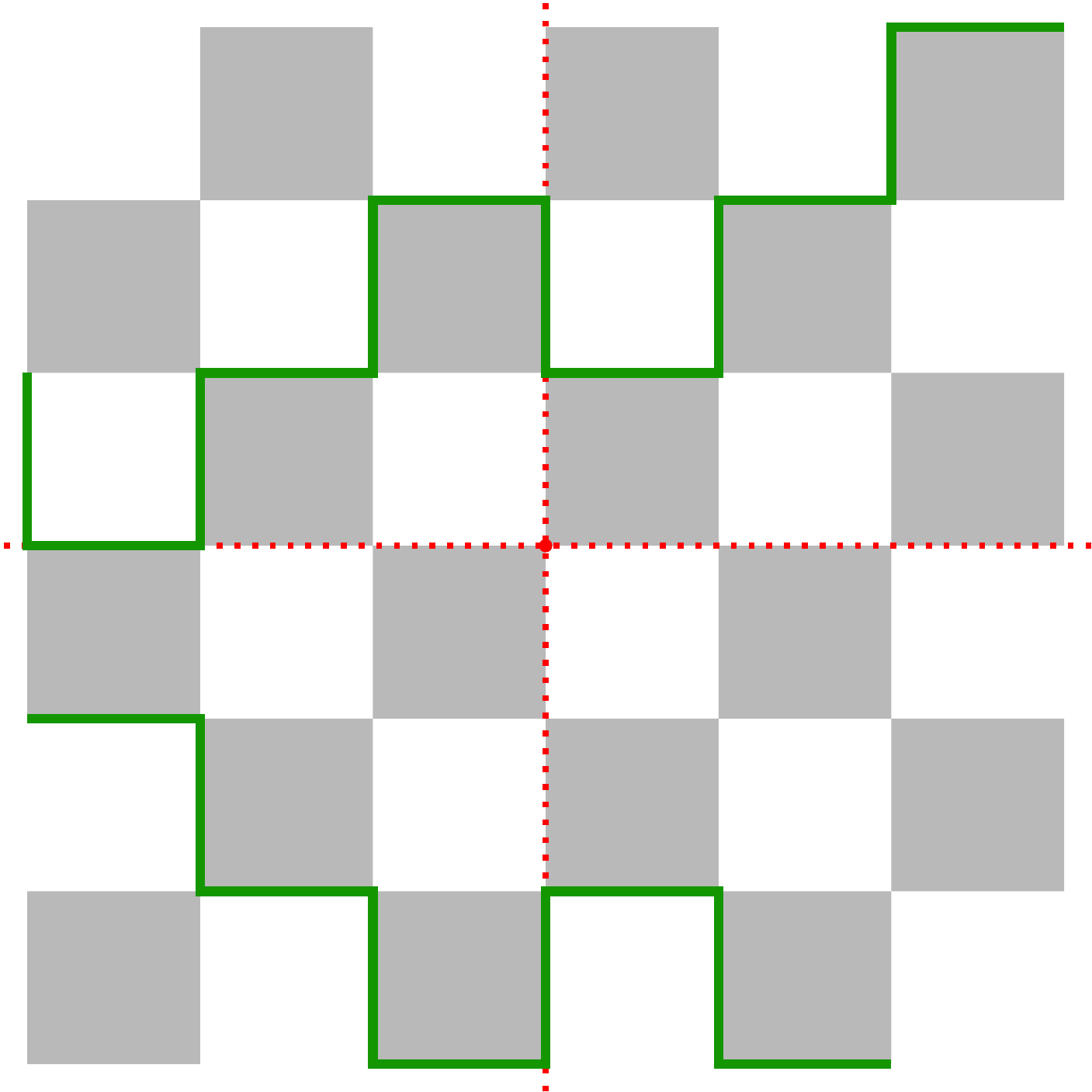}}%
    \put(0.50779049,0.51313358){\color[rgb]{0.75686275,0,0}\makebox(0,0)[lt]{\lineheight{1.25}\smash{\begin{tabular}[t]{l}$o$\end{tabular}}}}%
    \put(0,0){\includegraphics[width=\unitlength,page=2]{Images/bijection1.pdf}}%
    \put(0.56,0.67851454){\makebox(0,0)[lt]{\lineheight{1.25}\smash{\begin{tabular}[t]{l}$e$\\\end{tabular}}}}%
    \put(0.56,0.20556369){\makebox(0,0)[lt]{\lineheight{1.25}\smash{\begin{tabular}[t]{l}$e'$\\\end{tabular}}}}%
  \end{picture}%
\endgroup%

       \end{subfigure}
    \hglue 2 cm
    \begin{subfigure}[t]{0.3\textwidth}
        \def\svgwidth{\textwidth}
\begingroup%
  \makeatletter%
  \providecommand\color[2][]{%
    \errmessage{(Inkscape) Color is used for the text in Inkscape, but the package 'color.sty' is not loaded}%
    \renewcommand\color[2][]{}%
  }%
  \providecommand\transparent[1]{%
    \errmessage{(Inkscape) Transparency is used (non-zero) for the text in Inkscape, but the package 'transparent.sty' is not loaded}%
    \renewcommand\transparent[1]{}%
  }%
  \providecommand\rotatebox[2]{#2}%
  \newcommand*\fsize{\dimexpr\f@size pt\relax}%
  \newcommand*\lineheight[1]{\fontsize{\fsize}{#1\fsize}\selectfont}%
  \ifx\svgwidth\undefined%
    \setlength{\unitlength}{675bp}%
    \ifx\svgscale\undefined%
      \relax%
    \else%
      \setlength{\unitlength}{\unitlength * \real{\svgscale}}%
    \fi%
  \else%
    \setlength{\unitlength}{\svgwidth}%
  \fi%
  \global\let\svgwidth\undefined%
  \global\let\svgscale\undefined%
  \makeatother%
  \begin{picture}(1,1)%
    \lineheight{1}%
    \setlength\tabcolsep{0pt}%
    \put(0,0){\includegraphics[width=\unitlength,page=1]{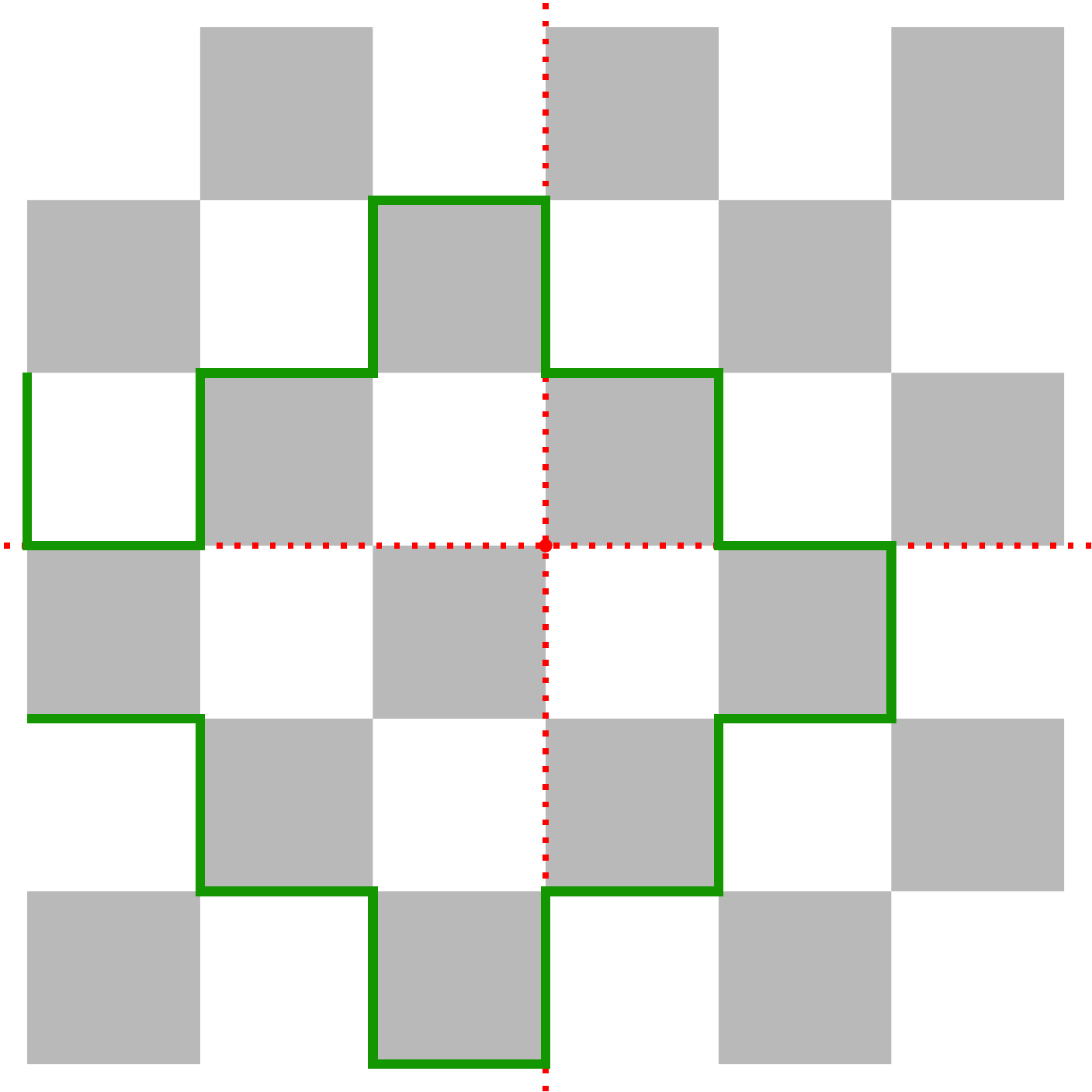}}%
    \put(0.50779049,0.51313358){\color[rgb]{0.75686275,0,0}\makebox(0,0)[lt]{\lineheight{1.25}\smash{\begin{tabular}[t]{l}$o$\end{tabular}}}}%
    \put(0,0){\includegraphics[width=\unitlength,page=2]{Images/bijection2.pdf}}%
    \put(0.56,0.67851454){\makebox(0,0)[lt]{\lineheight{1.25}\smash{\begin{tabular}[t]{l}$e$\\\end{tabular}}}}%
    \put(0.56,0.20556369){\makebox(0,0)[lt]{\lineheight{1.25}\smash{\begin{tabular}[t]{l}$e'$\\\end{tabular}}}}%
    \put(0,0){\includegraphics[width=\unitlength,page=3]{Images/bijection2.pdf}}%
  \end{picture}%
\endgroup%

    \end{subfigure}
    
   \caption{\label{fig: e_tau parity} If there were two infinite components (in green in the left picture), crossing from $\Z_-\times\Z$ to $\Z_+\times \Z$ in the positive sense at $e$ and $e'$, respectively, then, because the $y$-coordinates $-2$ and $1$ have different parities, one could change only $\xi_1$ and $\xi_2$ to create a green component (in the right picture) with both directions going towards $(-\infty,-\infty)$, contradicting \Cref{thm:asymptotic slope}.}
\end{figure}

Let us now look at the set $I$ of all edges of the form $\{(0,n),(1,n)\}$.
As noted in \cite{cornerPerco2025}, infinite clusters have an odd number of edges in $I$, while finite clusters have an even number of edges in $I$.
Furthermore, if $\F$ is a finite cluster, and $t,t'$ are the centers of squares adjacent to $\F$ on the outside of the loop created by $\F$, then $H(t)=H(t')$ by definition.
This means that if $e$ is an edge in $I$ that belongs to an infinite cluster, we only need to know of the number of edges in infinite clusters in $I$ to know its height.
More precisely, if one ascends $I$ edge by edge starting at $\{(0,0),(1,0)\}$, every finite cluster contributes an even number of times to the height, once $+1$ and once $-1$, which in total amounts to 0. On the other hand, any infinite cluster divides the plane into an ``upper'' and a ``lower'' part, with the squares adjacent to the cluster on either side all having the same height, differing from each other by 1. Furthermore, the parity argument in the previous paragraph implies that depending on the parity of $a^{\tau}_2$ (which does not depend on the choice of $\C$), either the height goes to $+\infty$ or to $-\infty$ when starting at $\C$ and considering the sequence of infinite clusters immediately ``above'' $\C$, as they either all add 1 to the height or all remove 1 from it. In particular, if the lemma was wrong, then the height would go to $-\infty$, contradicting the fact that this sequence of heights is exactly a subsequence of $X_i$, which, as a biased random walk with parameter $q>1/2$, goes to $+\infty$.
    \end{proof}

    \begin{theorem}
        \label{thm: corner percolation has indistinguishable clusters}
        Corner percolation on $\Z^2$ with parameters $(p,q)\neq (\frac 1 2,\frac 1 2)$ has indistinguishable clusters.
    \end{theorem}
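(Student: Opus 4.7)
The plan is to argue by contradiction via \Cref{cor: technical on graphs}, invoking the extension in \Cref{rem: weaker condition smallness instead of finiteness} that allows the difference set $S$ to be infinite as long as it is ``smallish'' relative to the shift along the merged cluster. Suppose a component property $\A$ distinguishes the infinite clusters with positive probability. Since the height function establishes a bijection between $\Z$ and the infinite clusters $(C_h)_{h \in \Z}$, coexistence of $\A$ and $\lnot\A$ types implies that with positive probability there is some $d \geq 1$ such that $C_0$ has type $\A$ and $C_d$ has type $\lnot\A$ (or vice versa). By an intermediate-value consideration on the sequence of types $(T_h)_{h \in \Z}$, one reduces to adjacent heights: with positive probability there is some (random) $h$ with $C_h$ of type $\A$ and $C_{h+1}$ of type $\lnot\A$, and by choosing deterministic vertices far to the left and right appropriately, one may further condition this down to a positive-probability event where $h$ is a specific integer.

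The key surgery is as follows. Fix some $x_0\in\Z$ and couple $(\xi,\eta)$ with a configuration $(\xi',\eta)$ in which $\xi'_n=\xi_n$ for $n\neq x_0$ while $\xi'_{x_0}$ is independently resampled from the same $\{\pm 1\}$-Bernoulli$(p)$ distribution, so $(\xi',\eta)$ is again distributed as corner percolation. With probability $p(1-p)>0$ we have $\xi_{x_0}=1$ and $\xi'_{x_0}=-1$; on this event $X'_n=X_n$ for $n<x_0$ and $X'_n=X_n-2$ for $n\geq x_0$, so the new height function $H'$ agrees with $H$ to the left of column $x_0$ and equals $H-1$ to the right. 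By the uniqueness of the infinite cluster per height established in \cite{cornerPerco2025}, the unique infinite cluster $K'_h$ at new height $h$ is a bi-infinite path whose left tail ($x\to-\infty$) coincides with the left tail of the original $C_h$ and whose right tail ($x\to+\infty$) coincides with the right tail of the original $C_{h+1}$, with the two halves joined through the rearranged edges in column $x_0$.

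Two applications of \Cref{cor: technical on graphs} then yield the contradiction. For the first, take $v$ to be a vertex far to the left on the left tail of $C_h$ (which therefore lies also on $K'_h$). The clusters $C_h$ in $\omega$ and $K'_h$ in $\omega'$ share the infinite connected component $K_0=$ (common left tail) after removing the set $S=\{x_0\}\times\Z$ of vertices where the labellings differ. The canonical orientation of \Cref{cor: parity is recoverable} supplies a stationary shift along $K_0$ in the negative $x$-direction, and \Cref{thm:asymptotic slope} guarantees that this shift has nonzero asymptotic $x$-velocity (assuming $q\neq 1/2$), so $S$ is smallish in the sense of \Cref{rem: weaker condition smallness instead of finiteness}. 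The proposition therefore forces $K'_h$ to be of type $\A$. A second application, with $v$ taken on the right tail of $C_{h+1}$ (also on $K'_h$), symmetrically gives $K'_h$ of type $\lnot\A$, a contradiction. The remaining case $q=1/2$, $p\neq 1/2$ is handled by the obvious symmetric construction, flipping $\eta_{y_0}$ instead of $\xi_{x_0}$ and taking $S=\Z\times\{y_0\}$, using the analogue of \Cref{cor: parity is recoverable} with roles of rows and columns swapped.

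The principal technical obstacle is the infinite nature of $S$: flipping a single coordinate of $\xi$ (or $\eta$) changes an entire column (or row) of edges, so no truly local modification of corner percolation exists, and the finite-$S$ statement of \Cref{cor: technical on graphs} cannot be applied directly. This is precisely the situation \Cref{rem: weaker condition smallness instead of finiteness} was designed to cover, and the linear-speed escape from $S$ guaranteed by \Cref{thm:asymptotic slope} supplies the required smallishness. A secondary point is the rigorous reduction to adjacent heights and the verification that the heights propagate through column $x_0$ as advertised, so that $K'_h$ is genuinely a single infinite cluster containing both the left tail of $C_h$ and the right tail of $C_{h+1}$, both of which rest on the results of \cite{cornerPerco2025}.
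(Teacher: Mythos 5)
Your proof is correct and, while driven by the same mechanism as the paper's (use the height function to graft a half-path of one cluster onto a half-path of another, then apply the smallishness version of \Cref{cor: technical on graphs}), the surgery itself is genuinely different. The paper resamples $\xi_i$ on a finite strip $[0,b_1]$ and engineers with positive probability that the heights of the clusters of two fixed vertices both become $0$; you flip a single coordinate $\xi_{x_0}$, which shifts the entire height function by $\pm 1$ to the right of column $x_0$, and then invoke the bijection between $\Z$ and the infinite clusters to conclude that $K'_h$ necessarily inherits the left tail of $C_h$ and the right tail of $C_{h+1}$. Your version is conceptually cleaner in that a single resampled coordinate suffices, and the merging is forced by uniqueness-per-height rather than constructed by hand; the paper's version has the minor advantage that the choice of $b$ makes the relevant events more explicitly local.

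A few small points to tighten. First, the phrase ``with probability $p(1-p)$'' oversimplifies: the event that the types differ is not independent of $\xi_{x_0}$, so you should argue that one of the two events $\{\xi_{x_0}=1,\ \text{types differ}\}$, $\{\xi_{x_0}=-1,\ \text{types differ}\}$ has positive probability, and flip accordingly (the shift is by $-1$ in one case and $+1$ in the other; either produces a merge of adjacent heights, just with shifted labels). Second, when you ``take $v$ to be a vertex far to the left'', recall that \Cref{cor: technical on graphs} needs fixed deterministic vertices $x,x'$; this is handled by a routine countability argument (for some fixed $v$, the event that $v$ lies on the truncated left tail has positive probability), and similarly to pin down a deterministic $h$ and $x_0$. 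Third, the paper itself bypasses \Cref{cor: technical on graphs} and invokes \Cref{lem:technical lemma for indis} directly, constructing the stationary permutation by hand with a \emph{deterministic} direction coming from \Cref{cor: parity is recoverable}; you appeal to \Cref{cor: technical on graphs}, whose two-ended construction chooses the direction uniformly at random. That is still fine (the lemma's event $\W$ absorbs the $1/2$ chance of choosing the direction that escapes into $K_0$), but it is worth saying explicitly that this extra randomness is permitted.
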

    \begin{proof}
        Let us assume for contradiction that there exists a component property $\A$ such that $\A$ and $\lnot\A$ clusters coexist with positive probability.
        Let us also assume that $\frac{2p-1}{1-2q}$ (the asymptotic slope of the infinite clusters as given by \Cref{thm:asymptotic slope}) is in $[0,1]$ and that $q<1/2
        ,\ p\geq1/2$; the proof in every other case is given by the numerous symmetries of the model.
        Here again, the positive direction of an infinite cluster is the one for which the first coordinate goes to $+\infty$.

        \Cref{thm:asymptotic slope} ensures that there is a positive probability for $o=(0,0)$ to be in an infinite path of type $\A$ and that the half-path starting from $o$ in the negative direction lies entirely in $(\infty,-1]\times\Z$.
        Call this event $E_0$.
        By assumption, there exists a second vertex $b$ such that conditionally on $E_0$, there is a positive probability for $b$ to be in an infinite path of type $\lnot\A$ and for the half-path starting from $b$ in the positive direction to lie entirely in $[b_1+1,\infty)\times\Z$.
        Let us write $E$ for the event that both $E_0$ and this event about $b$ happens.
        
        Furthermore, by unimodularity, there are almost surely infinitely many vertices with this same property as $b$ with the same second coordinate as $b$, so we can without loss of generality assume that $b_1,b_2>0$ and that $b_1>b_2$ (see \Cref{fig: corner_surgery}).

        Now, let $(\xi_i)_{i\in\Z}$ and $(\eta_j)_{j\in\Z}$ be sequences of i.i.d.\ random variables uniform in $\{-1,1\}$, and let $\omega$ be the corresponding corner percolation.

        Then, build $(\xi'_i)_{i\in\Z}$ by $\xi'_i:=\xi_i$ for $i\notin [0,b_1]$, and i.i.d.\ uniform in $\{-1,1\}$ else.
        Call $\omega'$ the corner percolation deduced from $\xi'$ and $\eta$.

        Now we will show that there is a positive probability for $\omega$ to be in $E$ while $\omega'$ having a cluster that contains the $\omega$-half-path started from $o$ in the negative direction and the $\omega$-half-path started from $b$ in the positive direction.
\begin{figure}[hbtp]
    \centering
    \begin{subfigure}[t]{0.3\textwidth}
    \def\svgwidth{\textwidth}
\begingroup%
  \makeatletter%
  \providecommand\color[2][]{%
    \errmessage{(Inkscape) Color is used for the text in Inkscape, but the package 'color.sty' is not loaded}%
    \renewcommand\color[2][]{}%
  }%
  \providecommand\transparent[1]{%
    \errmessage{(Inkscape) Transparency is used (non-zero) for the text in Inkscape, but the package 'transparent.sty' is not loaded}%
    \renewcommand\transparent[1]{}%
  }%
  \providecommand\rotatebox[2]{#2}%
  \newcommand*\fsize{\dimexpr\f@size pt\relax}%
  \newcommand*\lineheight[1]{\fontsize{\fsize}{#1\fsize}\selectfont}%
  \ifx\svgwidth\undefined%
    \setlength{\unitlength}{675bp}%
    \ifx\svgscale\undefined%
      \relax%
    \else%
      \setlength{\unitlength}{\unitlength * \real{\svgscale}}%
    \fi%
  \else%
    \setlength{\unitlength}{\svgwidth}%
  \fi%
  \global\let\svgwidth\undefined%
  \global\let\svgscale\undefined%
  \makeatother%
  \begin{picture}(1,1)%
    \lineheight{1}%
    \setlength\tabcolsep{0pt}%
    \put(0,0){\includegraphics[width=\unitlength,page=1]{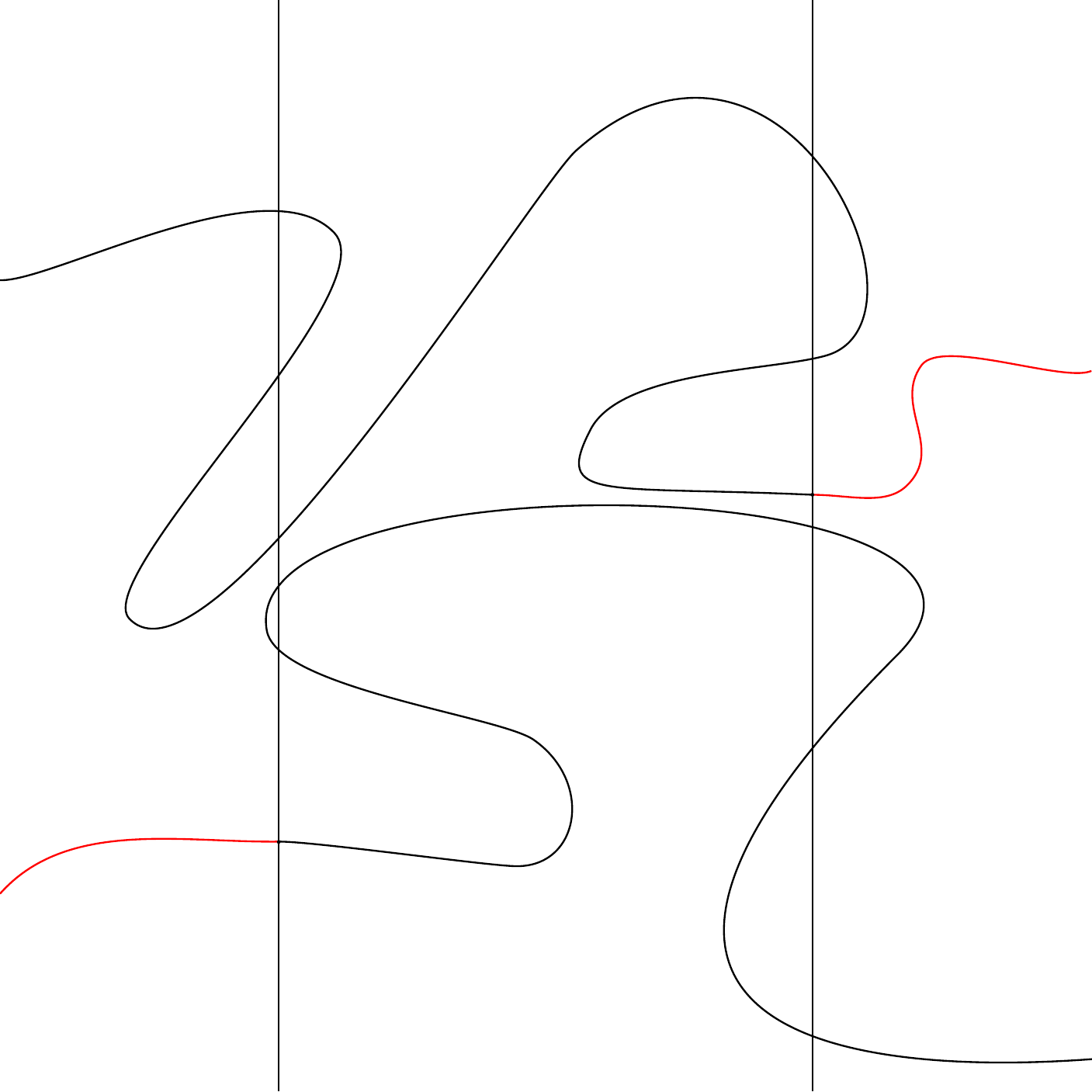}}%
    \put(0.24526043,0.24291667){\makebox(0,0)[rt]{\lineheight{1.25}\smash{\begin{tabular}[t]{r}$o$\end{tabular}}}}%
    \put(0.75591682,0.57062904){\makebox(0,0)[lt]{\lineheight{1.25}\smash{\begin{tabular}[t]{l}$b$\end{tabular}}}}%
    \put(0,0){\includegraphics[width=\unitlength,page=2]{Images/corner_surgery.pdf}}%
  \end{picture}%
\endgroup%

    \end{subfigure}
    \hskip 3 cm
    \begin{subfigure}[t]{0.3\textwidth}
        \def\svgwidth{\textwidth}
\begingroup%
  \makeatletter%
  \providecommand\color[2][]{%
    \errmessage{(Inkscape) Color is used for the text in Inkscape, but the package 'color.sty' is not loaded}%
    \renewcommand\color[2][]{}%
  }%
  \providecommand\transparent[1]{%
    \errmessage{(Inkscape) Transparency is used (non-zero) for the text in Inkscape, but the package 'transparent.sty' is not loaded}%
    \renewcommand\transparent[1]{}%
  }%
  \providecommand\rotatebox[2]{#2}%
  \newcommand*\fsize{\dimexpr\f@size pt\relax}%
  \newcommand*\lineheight[1]{\fontsize{\fsize}{#1\fsize}\selectfont}%
  \ifx\svgwidth\undefined%
    \setlength{\unitlength}{675bp}%
    \ifx\svgscale\undefined%
      \relax%
    \else%
      \setlength{\unitlength}{\unitlength * \real{\svgscale}}%
    \fi%
  \else%
    \setlength{\unitlength}{\svgwidth}%
  \fi%
  \global\let\svgwidth\undefined%
  \global\let\svgscale\undefined%
  \makeatother%
  \begin{picture}(1,1)%
    \lineheight{1}%
    \setlength\tabcolsep{0pt}%
    \put(0,0){\includegraphics[width=\unitlength,page=1]{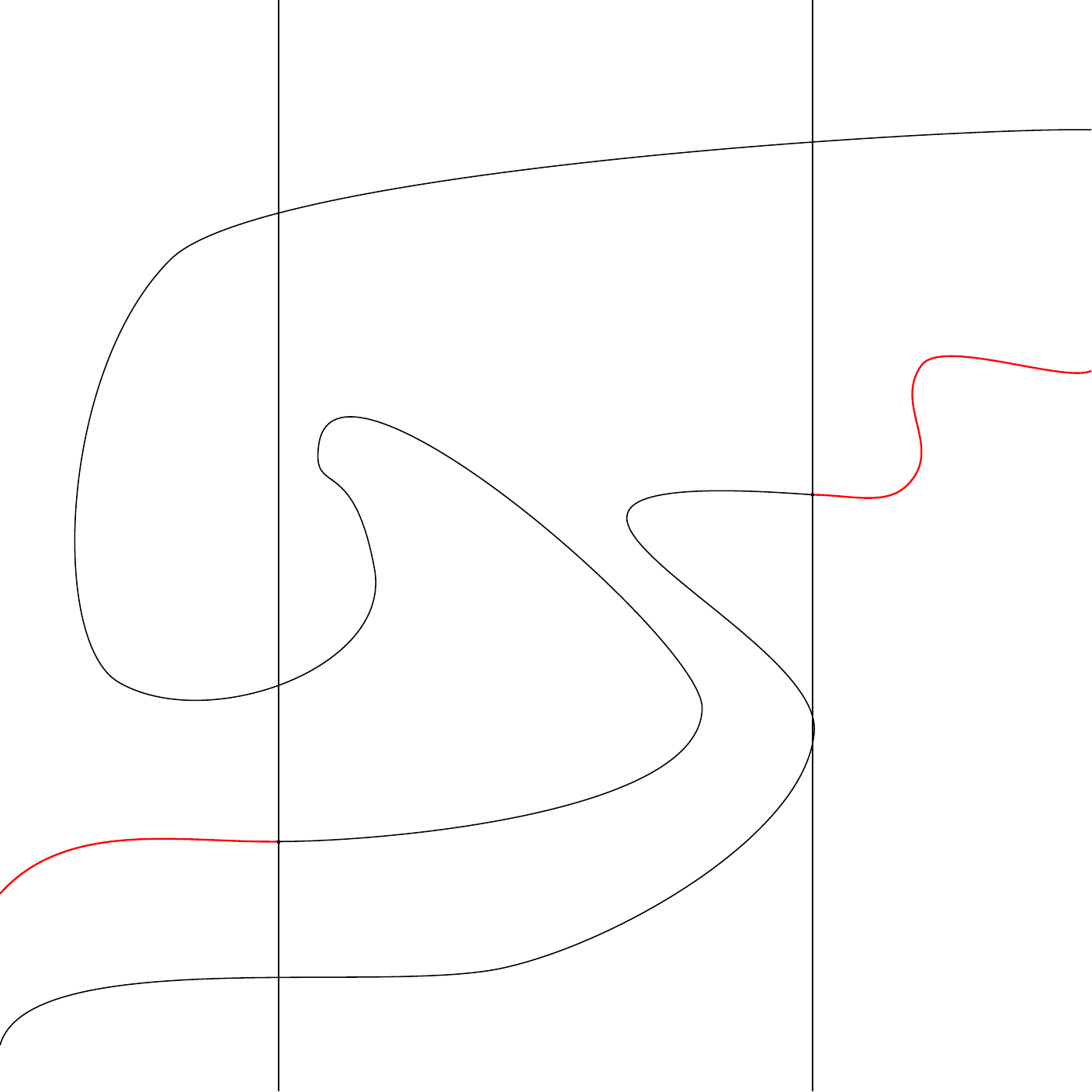}}%
    \put(0.24526043,0.24291667){\makebox(0,0)[rt]{\lineheight{1.25}\smash{\begin{tabular}[t]{r}$o$\end{tabular}}}}%
    \put(0.75591682,0.57062904){\makebox(0,0)[lt]{\lineheight{1.25}\smash{\begin{tabular}[t]{l}$b$\end{tabular}}}}%
    \put(0,0){\includegraphics[width=\unitlength,page=2]{Images/corner_surgery2.pdf}}%
  \end{picture}%
\endgroup%

    \end{subfigure}
    \caption{We can graft together the two red ends, since the vertical distance between $o$ and $b$ is smaller than the horizontal distance, and the two red paths live entirely to the left of $o$ and to the right of $b$. It does not matter if the infinite component containing $o$ is below the one containing $b$ (on the left picture), or above (on the right).}\label{fig: corner_surgery}
\end{figure}

        First, for $\omega$ to be in $E$, $b$ has to be odd according to \Cref{cor: parity is recoverable}, as the outgoing percolation edge going to the right goes in the positive direction.
        The height of the cluster of $b$ is thus given by $H(b+(1/2,-1/2))=\frac{X_{b_1}+Y_{b_2-1}}2$.
        Because $b_2>0$, $Y_{b_2-1}$ is at most $b_2-1$ and at least $-b_2+1$.
        This means that conditionally on $\omega$ being in $E$, there is a positive probability that $\xi'_0=-1$ and $Y'_{b_2-1}=-X'_{b_1}$ (where $X'$ and $Y'$ as defined like $X$ and $Y$ but on $\omega'$).
        If the event $\W$ that $\omega'$ satisfies these assertions and that $\omega$ is in $E$ is satisfied, the heights of the clusters of $o$ and $b$ are both 0 in $\omega'$, both clusters are infinite (because the configurations coincide outside of $[0,b_2]\times\Z$), and thus by the bijectivity of the height, $o$ and $b$ are in the same cluster of $\omega'$.
        Note also that $\W$ has positive probability, as it only depends on finitely many $\xi_i$s.

We can then use \Cref{lem:technical lemma for indis}.
Let $\pi_\omega$ be the random permutation of $\Z^2$ defined by $\pi_\omega(v)=v$ if $v$ is in a finite cluster, and $\pi_\omega(v)=w$ where $w$ is the second vertex following $w$ in the \textit{negative} direction if the cluster of $v$ is infinite.
Consider the random rooted network $(\Z^2,(\omega,\pi_\omega),o)$.
By including $\pi_\omega$ in the labelling, we ensure that the event $\W$ still makes sense in the context of a random rooted network (at least on the almost sure event that there are no nontrivial symmetries of $\omega$).
Let $(v_n)$ be the sequence on this random network given by the $\pi_\omega$-cycle of $o$, i.e., $v_0=o$ and $v_{n+1}=\pi_\omega(v_n)$ for any $n\in\N$.
Define $S_{\Z^2,(\omega,\pi_\omega),o}:=[0,b]\times \Z$.

Consider also the random rooted network $(\Z^2,(\omega',\pi_{\omega'}),b)$ with the sequence $(v'_n)$ given by the $\pi_{\omega'}$-cycle of $b$, and $S'_{\Z^2,(\omega',\pi_{\omega'}),b}:=[0,b]\times\Z$.

Using \Cref{lem: random permutation and SDRW are stationary}, $(v_n)$ and $(v'_n)$ are both stationary, and the two sequences are equal on $\W$ up to shifting $(v'_n)$, so we can apply \Cref{lem:technical lemma for indis} to show that $o$ in $\omega$ and $b$ in $\omega'$ have the same $\A$-type conditionally on $\W$.
We can do the same construction but starting both $(v_n)$ and $(v'_n)$ at $b$, and taking $\pi_\omega(v)$ to be the second vertex after $v$ in the \textit{positive} direction, to show that $b$ in $\omega$ and $b$ in $\omega'$ have the same $\A$-type conditionally on $\W$.
This implies that $o$ and $b$ have the same $\A$-type in $\omega$ conditionally on $\W$, contradicting its definition.
\end{proof}

\Cref{thm: corner percolation has indistinguishable clusters} allows us to prove Conjecture 5.3 in  \cite{cornerPerco2022}, a previous version of \cite{cornerPerco2025}.
The setup is the following:
we write the edges in the cluster of the origin as $\{\ldots,e_{-1},e_0,e_1\ldots\}$, with $e_{-1}$ and $e_0$ being incident to $o$. For every $i$ in $\Z$, let $\sgn(e_i)$ be defined as 1 if $e_i$ goes up or right and as $-1$ if it goes down or left (in the positive direction).

We then define the process $X$ as  $(\sgn(e_{2i}))_{i\in\Z}$ 
and $\theta$ as the shift map on $X$.
The conjecture was that conditionally on the cluster of the origin being infinite, $X$ is ergodic, i.e., that for every $X$ measurable event $E$ such that $\theta\cdot E=E$, $$\P(E\mid\text{the cluster of the origin is infinite})\in\{0,1\}.$$

This is a consequence of \Cref{thm: corner percolation has indistinguishable clusters}: if $E$ is such an event, we can define a component property by saying that a cluster $\C$ has type $\A$ if and only if the process $X$ started from some even $x$ in $\C$ (the choice of $x$ does not matter by shift invariance) satisfies $E$.
With probability 1, either all clusters have type $\A$ or none do, and by ergodicity of corner percolation, $\P(E)\in\{0,1\}$ if the cluster of the origin is infinite.

\subsection{A transient example: the Poisson zoo}
Let $\nu$ be any distribution on finite connected subgraphs of a Cayley graph $G$ containing $o$ (called lattice animals), and let $(\psi_v)_{v\in G}$  be i.i.d.\ Poisson variables with parameter $\lambda$.
Let also $(\xi_{v,n})_{v\in G,n\in \N}$ be i.i.d.\ variables with law $\nu$.

We define a site percolation $\omega$ called Poisson zoo by
$$
\omega:=\bigcup_{v\in G}\bigcup_{i=1}^{\psi_v}v\cdot\xi_{v,i}.
$$

\begin{proof}[Proof of \Cref{thm: indis for poisson zoo}]
    In order to apply \Cref{cor: technical on graphs}, we can consider $\omega$ as an $\eta$-measurable percolation, where $\eta_v:=(\psi_v,(\xi_{v,n})_{n\in\N})$.
    Let $x,y$ be two vertices, assume for contradiction that there is a positive probability for the event $E$ that $x$ and $y$ have infinite clusters, $x$ with type $\A$ and $y$ with type $\lnot\A$. 
    Take any minimal length path $S$ between $x$ and $y$.
    There are countably many possible lattice animals, and as such if $\eta_1=(\psi^1,\xi^1)$ is a sample of the Poisson zoo and $\eta_2=(\psi^2,\xi^2)$ is taken by resampling on the path $S$, there is a positive probability that $x$ has an infinite cluster of type $\A$ in $\eta_1$, that $y$ has an infinite cluster of type $\lnot\A$ in $\eta_1$, that $\psi^2_v>\psi^1_v$ for every $v$ in $S$, that $\xi^2_{v,n}=\xi^1_{v,n}$ for every $v$ in $S$ and $n\leq \psi^1_v$, and that $\psi^2_v = \psi^1_v+1$.

    This means that we simply add a lattice animal at each site along $S$, which connects in $\eta_2$ the clusters of $x$ and $y$.
    We can thus apply \Cref{cor: technical on graphs} to show as before that conditionally on $E$, $y$ in $\eta_1$ and $\eta_2$ has the same $\A$-type, and $x$ in $\eta_1$ and $y$ in $\eta_2$ have the same $\A$-type, contradicting the assumptions on $x$ and $y$.
\end{proof}

\subsection{Indistinguishability for not essentially tail properties}\label{ss:nontail}

In this subsection, $\eta$ is once again any invariant ergodic labelling of $G$, and $\omega$ any $\eta$-measurable site or bond percolation such that $(\omega,\eta)$ is jointly invariant.

\begin{definition}
    A \textbf{tail component property} is a component property $\A$ such that if $\omega$ and $\omega'$ are two percolation configurations with environments $\eta$ and $\eta'$ such that $\eta$ and $\eta'$ differ only at finitely many sites or edges, and the clusters of $o$ in $\omega$ and $\omega'$ are infinite and have a finite symmetric difference, then either $o$ has type $\A$ in both $\omega$ and $\omega'$, or in neither.

    We call a component property $\A$ \textbf{essentially tail} if there exists a tail component property $\A'$ with $\P(\{(\eta,x)\in\A\}\bigtriangleup\{(\eta,x)\in\A'\})=0$.
\end{definition}

Examples of tail component properties are easy to construct: transience, density, etc. Not essentially tail component properties are also easy when finitely many vertices can be specified in an invariant way, but this is possible only on graphs where the mass transport principle does not hold, such as the grandparent graph (see, e.g., \cite{LPbook}). Here, for Bernoulli percolation, a nontrivial not essentially tail component property, is, for example, the number of ``oldest'' vertices in the cluster. It is less straightforward to construct not essentially tail properties without relying on special vertices. An example, inspired by \cite{benjamini2004geometry}: a cluster has type $\A_k$ if one can connect it to any other cluster by adding at most $k$ edges.
It is clearly not tail, but it still might be trivial in any percolation process on Cayley graphs, and thus might be essentially tail. And, indeed, \Cref{thm: nontail indis} implies that not essentially tail component properties do not exist in invariant models on Cayley graphs.

\begin{proof}[Proof of \Cref{thm: nontail indis}]
    Assume for contradiction the existence of a nontrivial not essentially tail component property  $\A$.  
    Using ergodicity, this means that there is almost surely both type $\A$ and type $\lnot \A$ clusters coexisting in $\eta$.
    
    Fix an arbitrary vertex $x$ of $G$.
    We will show that there exists a finite $S$ such that if $\eta'$ is taken by resampling $\eta|_S$ independently, conditionally on $\eta|_{S^c}$, then with a positive probability the clusters of $x$ in $\eta$ and $\eta'$ have finite symmetric difference, but one has type $\A$ and the other $\lnot\A$.
    We call such a set $S$ \textbf{pivotal}.
    In other words, if $\A$ is not essentially tail, one has to be able to see it locally, as tail properties are properties that are by definition not seen locally.
    This will immediately allow us to apply \Cref{cor: technical on graphs} to arrive at a contradiction: any finite pivotal set $S$ fits the setup of the proposition, and thus resampling $\eta|_S$ should not change the type of $x$.

Indeed, assume once again for contradiction that a finite pivotal set does not exist.
    Then we construct the following component property: let $(\eta,x)\in \A'$ on the event that, for every finite set $F$, for $\eta'$ obtained by resampling $\eta$ independently in $F$, conditionally on the clusters of $x$ in $\eta$ and $\eta'$ to have a finite difference, $(\eta',x)$ has probability 1 to be in $\A$. Note that $\A' \subseteq \A$ by construction. 
    
    Furthermore, $\P\big((\eta,x)\in\A' \,\big|\, (\eta,x)\in\A\big)=1$, meaning that, up to zero-measure sets, $\A\subseteq \A'$. 
    Indeed, assume that $(\eta,x)\in \A$ and there is a finite set $F$ such that for $\eta'$ obtained by resampling $\eta$ independently in $F$, conditionally on the clusters of $x$ in $\eta$ and $\eta'$ to have a finite difference, $(\eta',x)$ had probability strictly less than 1 to be in $\A$. Then $F$ would be a pivotal set.
    
    Altogether, $\P((\eta,x)\in\A\bigtriangleup\A')=0$. But it is clear from its definition that $\A'$ is a tail component property. Hence $\A$ is essentially tail, contradicting the initial assumption.
\end{proof}

\begin{remark}
    The proof of this theorem can be adapted almost without modifications to show that not essentially tail component properties do not exist for invariant ergodic random permutations on Cayley graphs, or more general invariant partitions with a transient or null-recurrent stationary sequence $(v_n)$. One could also make a generalization to unimodular random networks.
\end{remark}

\bibliographystyle{alpha}
\bibliography{biblio_bibtex}

@article{vdBK84,
  title={On the continuity of the percolation probability function},
  author={Berg, Jacob van den and Keane, Michael},
  journal={Contemp. Math},
  volume={26},
  pages={61--65},
  year={1984}
}

@article{QuiTaggi,
  title={Macroscopic loops in the {B}ose gas, spin {$O(N)$} and related models},
  author={Quitmann, Alexandra and Taggi, Lorenzo},
  journal={Communications in Mathematical Physics},
  volume={400},
  number={3},
  pages={2081--2136},
  year={2023},
  publisher={Springer}
}

@article{kenyon2014conformal,
  title={Conformal invariance of loops in the double-dimer model},
  author={Kenyon, Richard},
  journal={Communications in Mathematical Physics},
  volume={326},
  number={2},
  pages={477--497},
  year={2014},
  publisher={Springer}
}

@article{betz2009spatial,
  title={Spatial random permutations and infinite cycles},
  author={Betz, Volker and Ueltschi, Daniel},
  journal={Communications in mathematical physics},
  volume={285},
  pages={469--501},
  year={2009},
  publisher={Springer}
}

@article{HILARIO20195037,
title = {Bernoulli line percolation},
journal = {Stochastic Processes and their Applications},
volume = {129},
number = {12},
pages = {5037-5072},
year = {2019},
issn = {0304-4149},
doi = {https://doi.org/10.1016/j.spa.2019.01.002},
url = {https://www.sciencedirect.com/science/article/pii/S0304414919300225},
author = {M.R. Hilário and V. Sidoravicius},
abstract = {We study a percolation model on Zd, d≥3, in which the discrete lines of vertices parallel to the coordinate axes are entirely removed independently. We show the existence of a phase transition and establish that, for a certain range of the parameters including parts of both the subcritical and supercritical phases, the truncated connectivity function has power-law decay. For d=3, the power-law decay extends through all the supercritical phase. We also show that the number of infinite connected components is either 0, 1 or ∞.}
}

@article{seaton2023mathematical,
  title={Mathematical specification of hitomezashi designs},
  author={Seaton, Katherine A and Hayes, Carol},
  journal={Journal of Mathematics and the Arts},
  volume={17},
  number={1-2},
  pages={156--177},
  year={2023},
  publisher={Taylor \& Francis}
}

@article{hooper2013renormalization,
  title={Renormalization of polygon exchange maps arising from corner percolation},
  author={Hooper, W Patrick},
  journal={Inventiones mathematicae},
  volume={191},
  number={2},
  pages={255--320},
  year={2013},
  publisher={Springer}
}

@article{glasner1997kazhdan,
  title={Kazhdan's property {T} and the geometry of the collection of invariant measures},
  author={Glasner, Eli and Weiss, Benjamin},
  journal={Geometric \& Functional Analysis},
  volume={7},
  number={5},
  pages={917--935},
  year={1997},
  publisher={Springer}
}

@article{hutchcroft2019component,
  title={The component graph of the uniform spanning forest: transitions in dimensions 9, 10, 11, ...},
  author={Hutchcroft, Tom and Peres, Yuval},
  journal={Probability Theory and Related Fields},
  volume={175},
  pages={141--208},
  year={2019},
  publisher={Springer}
}

@article{benjamini2004geometry,
  title={Geometry of the uniform spanning forest: transitions in dimensions $4, 8, 12,\dots$},
  author={Benjamini, Itai and Kesten, Harry and Peres, Yuval and Schramm, Oded},
  journal={Annals of Mathematics},
  volume={160},
  number={2},
  pages={465--491},
  year={2004},
  publisher={Princeton University and the Institute for Advanced Study}
}

@article{timar2006ends,
  title={Ends in free minimal spanning forests},
  author={Tim{\'a}r, {\'A}d{\'a}m},
  journal={The Annals of Probability},
  pages={865--869},
  year={2006},
  publisher={JSTOR}
}

@article{gheysens2017fixed,
  title={Fixed points for bounded orbits in {H}ilbert spaces},
  author={Gheysens, Maxime and Monod, Nicolas},
  journal={Annales Scientifiques De L'Ecole Normale Superieure},
  volume={50},
  number={1},
  pages={131--156},
  year={2017}
}

@article{angel2003random,
  title={Random infinite permutations and the cyclic time random walk},
  author={Angel, Omer},
  journal={Discrete Mathematics \& Theoretical Computer Science},
  year={2003},
  publisher={Episciences.org}
}

@article{hammond2015sharp,
  title={Sharp phase transition in the random stirring model on trees},
  author={Hammond, Alan},
  journal={Probability Theory and Related Fields},
  volume={161},
  pages={429--448},
  year={2015},
  publisher={Springer}
}

@article{hammondInfiniteCyclesRandom2013,
  title = {Infinite Cycles in the Random Stirring Model on Trees},
  author = {Hammond, Alan},
  year = {2013},
  journal = {Bulletin of the Institute of Mathematics, Academia Sinica, Special Issue in honour of S.R.S. Varadhan's 70th birthday},
  volume = {8},
  number = {1},
  eprint = {1202.1319},
  primaryclass = {math-ph},
  pages = {85--104},
  doi = {10.48550/arXiv.1202.1319},
  urldate = {2024-09-27},
  abstract = {We prove that, in the random stirring model of parameter T on an infinite rooted tree each of whose vertices has at least two offspring, infinite cycles exist almost surely, provided that T is sufficiently high. In the appendices, the bound on degree above which the result holds is improved slightly.},
  archiveprefix = {arXiv},
  keywords = {Mathematical Physics,Mathematics - Probability},
  file = {/home/dalami/Zotero/storage/D9RTHU5N/Hammond - 2013 - Infinite cycles in the random stirring model on tr.pdf;/home/dalami/Zotero/storage/NTCH7KH2/1202.html}
}

@article{toth1993improved,
  title={Improved lower bound on the thermodynamic pressure of the spin 1/2 {H}eisenberg ferromagnet},
  author={T{\'o}th, B{\'a}lint},
  journal={Letters in Mathematical Physics},
  volume={28},
  number={1},
  pages={75--84},
  year={1993},
  publisher={Springer}
}

@article{harris1972nearest,
  title={Nearest-neighbor {M}arkov interaction processes on multidimensional lattices},
  author={Harris, Ted E},
  journal={Advances in Mathematics},
  volume={9},
  number={1},
  pages={66--89},
  year={1972},
  publisher={Elsevier}
}

@article{martineau2016ergodicity,
  title={Ergodicity and indistinguishability in percolation theory},
  author={Martineau, S{\'e}bastien},
  journal={L’Enseignement Math{\'e}matique},
  volume={61},
  number={3},
  pages={285--319},
  year={2016}
}

@article{gaboriau2009measurable,
  title={A measurable-group-theoretic solution to von {N}eumann’s problem},
  author={Gaboriau, Damien and Lyons, Russell},
  journal={Inventiones mathematicae},
  volume={177},
  number={3},
  pages={533--540},
  year={2009},
  publisher={Springer}
}

@article{chifan2010ergodic,
  title={Ergodic subequivalence relations induced by a {B}ernoulli action},
  author={Chifan, Ionut and Ioana, Adrian},
  journal={Geometric and Functional Analysis},
  volume={20},
  number={1},
  pages={53--67},
  year={2010},
  publisher={Springer}
}

@article{hutchcroft2017indistinguishability,
  title={Indistinguishability of trees in uniform spanning forests},
  author={Hutchcroft, Tom and Nachmias, Asaf},
  journal={Probability Theory and Related Fields},
  volume={168},
  pages={113--152},
  year={2017},
  publisher={Springer}
}

@article{timar2018indistinguishability,
  title={Indistinguishability of the components of random spanning forests},
  author={Tim{\'a}r, {\'A}d{\'a}m},
  journal={The Annals of Probability},
  volume={46},
  number={4},
  pages={2221--2242},
  year={2018},
  publisher={JSTOR}
}

@incollection{haggstrom1999percolation,
  title={Percolation on Transitive Graphs as a Coalescent Process: Relentless Merging Followed by Simultaneous Uniqueness},
  author={H{\"a}ggstr{\"o}m, Olle and Peres, Yuval and Schonmann, Roberto H},
  booktitle={Perplexing Problems in Probability: Festschrift in Honor of Harry Kesten},
  series={Progress in Probability},
  volume={44},
  pages={69--90},
  year={1999},
  publisher={Springer}
}

@article{benjamini1999group,
  title={Group-invariant percolation on graphs},
  author={Benjamini, Itai and Lyons, Russell and Peres, Yuval and Schramm, Oded},
  journal={Geometric \& Functional Analysis},
  volume={9},
  number={1},
  pages={29--66},
  year={1999},
  publisher={Springer}
}

@article{ráth2022percolationworms,
  title = {Percolation of Worms},
  author = {R{\'a}th, Bal{\'a}zs and Rokob, S{\'a}ndor},
  year = {2022},
  month = oct,
  journal = {Stochastic Processes and their Applications},
  volume = {152},
  pages = {233--288},
  issn = {0304-4149},
  doi = {10.1016/j.spa.2022.06.018},
  urldate = {2025-06-09},
  abstract = {We introduce a new correlated percolation model on the d-dimensional lattice Zd called the random length worms model. Assume given a probability distribution on the set of positive integers (the length distribution) and v{$\in$}(0,{$\infty$}) (the intensity parameter). From each site of Zd we start POI(v) independent simple random walks with this length distribution. We investigate the connectivity properties of the set Sv of sites visited by this cloud of random walks. It is easy to show that if the second moment of the length distribution is finite then Sv undergoes a percolation phase transition as v varies. Our main contribution is a sufficient condition on the length distribution which guarantees that Sv percolates for all v{$>$}0 if d{$\geq$}5. E.g., if the probability mass function of the length distribution is m({$\ell$})=c{$\cdot$}ln(ln({$\ell$})){$\varepsilon$}/({$\ell$}3ln({$\ell$}))1[{$\ell\geq\ell$}0]for some {$\ell$}0{$>$}ee and {$\varepsilon>$}0 then Sv percolates for all v{$>$}0. Note that the second moment of this length distribution is only ``barely'' infinite. In order to put our result in the context of earlier results about similar models (e.g., finitary random interlacements, loop percolation, Bernoulli hyper-edge percolation, Poisson Boolean model, ellipses percolation, etc.), we define a natural family of percolation models called the Poisson zoo and argue that the percolative behaviour of the random length worms model is quite close to being ``extremal'' in this family of models.},
  keywords = {Capacity,Dynamic renormalization,Percolation,Random walk}
}

@article{cornerPerco2025,
  title = {Corner Percolation with Preferential Directions},
  author = {Marchand, R{\'e}gine and Marcovici, Ir{\`e}ne and Siest, Pierrick},
  year = {2025},
  journal = {Latin American Journal of Probability and Mathematical Statistics},
  volume = {22},
  number = {1},
  pages = {233--244},
  issn = {1980-0436},
  doi = {10.30757/ALEA.v22-09},
  urldate = {2025-06-27},
  langid = {english},
  file = {/home/dalami/Zotero/storage/TIMRCSMX/Marchand et al. - 2025 - Corner percolation with preferential directions.pdf}
}

@article{haggstromInfiniteClustersDependent1997,
  title = {Infinite Clusters in Dependent Automorphism Invariant Percolation on Trees},
  author = {H{\"a}ggstr{\"o}m, Olle},
  year = {1997},
  month = jul,
  journal = {The Annals of Probability},
  volume = {25},
  number = {3},
  pages = {1423--1436},
  publisher = {Institute of Mathematical Statistics},
  issn = {0091-1798, 2168-894X},
  doi = {10.1214/aop/1024404518},
  urldate = {2025-06-19},
  abstract = {We study dependent bond percolation on the homogeneous tree \$T\_n\$ of order \$n {\textbackslash}geq 2\$ under the assumption of automorphism invariance. Excluding a trivial case, we find that the number of infinite clusters a.s. is either 0 or \${\textbackslash}infty\$. Furthermore, each infinite cluster a.s. has either 1, 2 or infinitely many topological ends, and infinite clusters with infinitely many topological ends have a.s. a branching number greater than 1. We also show that if the marginal probability that a single edge is open is at least \$2/(n + 1)\$, then the existence of infinite clusters has to have positive probability. Several concrete examples are considered.},
  keywords = {05C05,60J80,60K35,automorphism invariance,branching number,percolation,topological ends,trees},
  file = {/home/dalami/Zotero/storage/CR3I8K58/Häggström - 1997 - Infinite clusters in dependent automorphism invariant percolation on trees.pdf}
}

@misc{bowen2022perfectmatchingshyperfinitegraphings,
      title={Perfect matchings in hyperfinite graphings}, 
      author={Matthew Bowen and G\'abor Kun and Marcin Sabok},
      year={2022},
      eprint={2106.01988},
      archivePrefix={arXiv},
      primaryClass={math.CO},
    note={\url{https://arxiv.org/abs/2106.01988}},
}

@article{duminilcopin2016exponentialdecaylooplengths,
  title = {Exponential Decay of Loop Lengths in the Loop {$O(n)$} Model with Large $n$},
  author = {{Duminil-Copin}, Hugo and Peled, Ron and Samotij, Wojciech and Spinka, Yinon},
  year = {2017},
  month = feb,
  journal = {Communications in Mathematical Physics},
  volume = {349},
  number = {3},
  pages = {777--817},
  issn = {1432-0916},
  doi = {10.1007/s00220-016-2815-4},
  urldate = {2025-06-09},
  abstract = {The loop O(n) model is a model for a random collection of non-intersecting loops on the hexagonal lattice, which is believed to be in the same universality class as the spin O(n) model. It has been conjectured that both the spin and the loop O(n) models exhibit exponential decay of correlations when \$\$n {$>$} 2\$\$. We verify this for the loop O(n) model with large parameter n, showing that long loops are exponentially unlikely to occur, uniformly in the edge weight x. Our proof provides further detail on the structure of typical configurations in this regime. Putting appropriate boundary conditions, when nx6 is sufficiently small, the model is in a dilute, disordered phase in which each vertex is unlikely to be surrounded by any loops, whereas when nx6 is sufficiently large, the model is in a dense, ordered phase which is a small perturbation of one of the three ground states.},
  langid = {english},
  keywords = {Coarse-grained models,Complexity,Phase Transition and Critical Phenomena,Statistical Mechanics,Statistical Physics,Strongly Correlated Systems},
  file = {/home/dalami/Zotero/storage/X6A4CYMT/Duminil-Copin et al. - 2017 - Exponential Decay of Loop Lengths in the Loop O(n) Model with Large n.pdf}
}

@article{PeteRokob,
  title={Nonamenable {P}oisson zoo},
  author={Pete, G{\'a}bor and Rokob, S{\'a}ndor},
  note={\url{https://arXiv.org/abs/2505.07145}},
  year={2025}
}

@article{GlazmanHZ,
      title={Planar percolation and the loop {$O(n)$} model}, 
      author={Alexander Glazman and Matan Harel and Nathan Zelesko},
      year={2025},
      note={\url{https://arxiv.org/abs/2508.20917}},
}

@article{qCI,
      title={Quantitative indistinguishability and sparse and dense clusters in factor of {IID} percolations}, 
      author={Endre {Cs}óka and Péter Mester and Gábor Pete},
      year={2025},
      note={\url{https://arxiv.org/abs/2512.07740}}, 
}

@article{Mikolaj,
    title= {Infinitesimal containment and sparse factors of {IID}},
    author={Miko{\l}aj Fr{\c{a}}czyk},
    year={2025},
    note={\url{https://arxiv.org/abs/2512.09301}},
}

@misc{peled2019lecturesspinloopon,
      title={Lectures on the Spin and Loop ${O}(n)$ Models}, 
      author={Ron Peled and Yinon Spinka},
      year={2019},
      eprint={1708.00058},
      archivePrefix={arXiv},
      primaryClass={math-ph},
      note={\url{https://arXiv.org/abs/1708.00058}},
}

@article{benjamini1996percolation,
  title={Percolation beyond $\mathbb{Z}^d$, many questions and a few answers},
  author={Benjamini, Itai and Schramm, Oded},
  journal={Electronic Communications in Probability},
  volume={1},
  pages={71--82},
  year={1996},
  publisher={University of Washington}
}

@article{ornstein1987entropy,
  title={Entropy and isomorphism theorems for actions of amenable groups},
  author={Ornstein, Donald S and Weiss, Benjamin},
  journal={Journal d'Analyse Math{\'e}matique},
  volume={48},
  number={1},
  pages={1--141},
  year={1987},
  publisher={Springer}
}

@article{bowen2020examples,
  title={Examples in the entropy theory of countable group actions},
  author={Bowen, Lewis},
  journal={Ergodic Theory and Dynamical Systems},
  volume={40},
  number={10},
  pages={2593--2680},
  year={2020},
  publisher={Cambridge University Press}
}

@book{kechris2004topics,
  title={Topics in Orbit Equivalence},
  author={Kechris, Alexander S and Miller, Benjamin D},
  series = {Lecture Notes in Mathematics},
  number={1852},
  year={2004},
  publisher={Springer}
}

@article{aldousProcessesUnimodularRandom2007,
  title = {Processes on Unimodular Random Networks},
  author = {Aldous, David and Lyons, Russell},
  year = {2007},
  month = jan,
  journal = {Electronic Journal of Probability},
  volume = {12},
  eprint = {math/0603062},
  pages = {1454--1508},
  issn = {1083-6489, 1083-6489},
  doi = {10.1214/EJP.v12-463},
  urldate = {2024-09-27},
  abstract = {We investigate unimodular random networks. Our motivations include their characterization via reversibility of an associated random walk and their similarities to unimodular quasi-transitive graphs. We extend various theorems concerning random walks, percolation, spanning forests, and amenability from the known context of unimodular quasi-transitive graphs to the more general context of unimodular random networks. We give properties of a trace associated to unimodular random networks with applications to stochastic comparison of continuous-time random walk.},
  archiveprefix = {arXiv},
  keywords = {05C80,60C05,60K99,amenability,equivalence relations,infinite graphs,percolation,quasi-transitive,Random walks,reversibility,sofic groups,Spanning forests,Stochastic comparison,Trace,transitivity,weak convergence},
  file = {/home/dalami/Zotero/storage/83BTCGAA/Aldous et Lyons - 2007 - Processes on Unimodular Random Networks.pdf}
}

@book{LPbook,
    AUTHOR = {Lyons, Russell and Peres, Yuval},
     TITLE = {Probability on Trees and Networks},
    SERIES = {Cambridge Series in Statistical and Probabilistic Mathematics},
    VOLUME = {42},
 PUBLISHER = {Cambridge University Press, New York},
      YEAR = {2016},
     PAGES = {xv+699},
      ISBN = {978-1-107-16015-6},
   MRCLASS = {60C05 (05C05 05C81 28A80 60J50 60J80 60K35 82B41)},
  MRNUMBER = {3616205},
       DOI = {10.1017/9781316672815},
       URL = {http://dx.doi.org/10.1017/9781316672815}
}

@book{PGG,
    AUTHOR = {Pete, G\'abor},
     TITLE = {Probability and Geometry on Groups},
PUBLISHER = {Book in preparation, available at \url{https://math.bme.hu/~gabor}},
      YEAR = {2025}
}

@article{hutchcroft2020non,
  title={Non-intersection of transient branching random walks},
  author={Hutchcroft, Tom},
  journal={Probability Theory and Related Fields},
  volume={178},
  number={1},
  pages={1--23},
  year={2020},
  publisher={Springer}
}

@article{burtonkeane,
author = {R. M. Burton and M. Keane},
title = {{Density and uniqueness in percolation}},
volume = {121},
journal = {Communications in Mathematical Physics},
number = {3},
publisher = {Springer},
pages = {501--505},
year = {1989},
}

@misc{elboimInfiniteCyclesInterchange2024,
  title = {Infinite Cycles in the Interchange Process in Five Dimensions},
  author = {Elboim, Dor and Sly, Allan},
  year = {2024},
  month = feb,
  number = {arXiv:2211.17023},
  eprint = {2211.17023},
  primaryclass = {math-ph},
  publisher = {arXiv},
  doi = {10.48550/arXiv.2211.17023},
  note={\url{https://arXiv.org/abs/2211.17023}},
  urldate = {2024-09-27},
  abstract = {In the interchange process on a graph \$G=(V,E)\$, distinguished particles are placed on the vertices of \$G\$ with independent Poisson clocks on the edges. When the clock of an edge rings, the two particles on the two sides of the edge interchange. In this way, a random permutation \${\textbackslash}pi\_{\textbackslash}beta:V{\textbackslash}to V\$ is formed for any time \${\textbackslash}beta {$>$}0\$. One of the main objects of study is the cycle structure of the random permutation and the emergence of long cycles. We prove the existence of infinite cycles in the interchange process on \${\textbackslash}mathbb Z {\textasciicircum}d\$ for all dimensions \$d{\textbackslash}ge 5\$ and all large \${\textbackslash}beta \$, establishing a conjecture of B{\textbackslash}'alint T{\textbackslash}'oth from 1993 in these dimensions. In our proof, we study a self-interacting random walk called the cyclic time random walk. Using a multiscale induction we prove that it is diffusive and can be coupled with Brownian motion. One of the key ideas in the proof is establishing a local escape property which shows that the walk will quickly escape when it is entangled in its history in complicated ways.},
  archiveprefix = {arXiv},
  keywords = {Mathematical Physics,Mathematics - Probability},
  file = {/home/dalami/Zotero/storage/T5JQX79M/Elboim et Sly - 2024 - Infinite cycles in the interchange process in five.pdf;/home/dalami/Zotero/storage/SAJH8PN3/2211.html}
}

@article{bowenFinitaryRandomInterlacements2019,
  title = {Finitary Random Interlacements and the {{Gaboriau}}--{{Lyons}} Problem},
  author = {Bowen, Lewis},
  year = {2019},
  month = jun,
  journal = {Geometric and Functional Analysis},
  volume = {29},
  number = {3},
  pages = {659--689},
  issn = {1420-8970},
  doi = {10.1007/s00039-019-00494-4},
  urldate = {2025-06-09},
  abstract = {The von Neumann--Day problem asks whether every non-amenable group contains a non-abelian free group. It was answered in the negative by Ol'shanskii in the 1980s. The measurable version (formulated by Gaboriau--Lyons) asks whether every non-amenable measured equivalence relation contains a non-amenable treeable subequivalence relation. This paper obtains a positive answer in the case of arbitrary Bernoulli shifts over a non-amenable group, extending work of Gaboriau--Lyons. The proof uses an approximation to the random interlacement process by random multisets of geometrically-killed random walk paths. There are two applications: (1) the Gaboriau--Lyons problem for actions with positive Rokhlin entropy admits a positive solution, (2) for any non-amenable group, all Bernoulli shifts factor onto each other.},
  langid = {english},
  keywords = {37A35,Abstract Harmonic Analysis,Bernoulli shifts,Complexity,Functional Analysis,Gaboriau-Lyons,Mathematical Physics,Measure and Integration,Non-amenable groups,Random interlacement,Topological Groups and Lie Groups,Von Neumann-Day problem}
}

@article{lyonsIndistinguishabilityPercolationClusters1999,
  title = {Indistinguishability of Percolation Clusters},
  author = {Lyons, Russell and Schramm, Oded},
  year = {1999},
  journal = {The Annals of Probability},
  volume = {27},
  number = {4},
  pages = {1809--1836},
  publisher = {The Institute of Mathematical Statistics},
  issn = {0091-1798},
  abstract = {We show that when percolation produces infinitely many infinite clusters on a Cayley graph, one cannot distinguish the clusters from each other by any invariantly defined property. This implies that uniqueness of the infinite cluster is equivalent to non-decay of connectivity (a.k.a. long-range order). We then derive applications concerning uniqueness in Kazhdan groups and in wreath products, and inequalities for \$p\_u\$.},
  langid = {english},
  keywords = {60B99,60B99 (Primary),60D05,60D05 (Secondary),60K35,82B43,Cayley graph,connectivity,Finite energy,group,Group Theory,Kazhdan,Mathematical Physics,Mathematics,nonamenable,Probability,transitive,uniqueness,wreath product},
  file = {/home/dalami/Zotero/storage/VGHNIYQE/Lyons et Schramm - 1999 - Indistinguishability of Percolation Clusters.pdf}
}

@misc{cornerPerco2022,
  title = {Corner Percolation with Preferential Directions},
  author = {Marchand, R{\'e}gine and Marcovici, Ir{\`e}ne and Siest, Pierrick},
  year = {2022},
  month = dec,
  number = {arXiv:2212.04399},
  eprint = {2212.04399},
  primaryclass = {math},
  publisher = {arXiv},
  doi = {10.48550/arXiv.2212.04399},
  note={\url{https://arXiv.org/abs/2212.04399v1}},
  abstract = {Corner percolation is a dependent bond percolation model on Z{\textasciicircum}2 introduced by B{\textbackslash}'alint T{\textbackslash}'oth, in which each vertex has exactly two incident edges, perpendicular to each other. G{\textbackslash}'abor Pete has proven in 2008 that under the maximal entropy probability measure, all connected components are finite cycles almost surely. We consider here a regime where West and North directions are preferred with probability p and q respectively, with (p,q) different from (1/2,1/2). We prove that there exists almost surely an infinite number of infinite connected components, which are in fact infinite paths. Furthermore, they all have the same asymptotic slope (2q-1)/(1-2p).},
  archiveprefix = {arXiv},
  keywords = {60K35 82B43,Mathematics - Probability}
}

@article{peteCornerPercolationMathbb2008a,
  title = {Corner Percolation on $\mathbb{Z}^2$ and the Square Root of 17},
  author = {Pete, G{\'a}bor},
  year = {2008},
  month = sep,
  journal = {The Annals of Probability},
  volume = {36},
  number = {5},
  eprint = {math/0507457},
  pages = {1711--1747},
  issn = {0091-1798, 2168-894X},
  doi = {10.1214/07-AOP373},
  urldate = {2024-09-27},
  abstract = {We consider a four-vertex model introduced by B{\'a}lint T{\'o}th: a dependent bond percolation model on {$\mathbb{Z}$}2 in which every edge is present with probability 1/2 and each vertex has exactly two incident edges, perpendicular to each other. We prove that all components are finite cycles almost surely, but the expected diameter of the cycle containing the origin is infinite. Moreover, we derive the following critical exponents: the tail probability {$\mathbb{P}$}(diameter of the cycle of the origin {$>$}n){$\approx$}n-{$\gamma$} and the expectation \${\textbackslash}mathbb\{E\}\$ (length of a typical cycle with diameter n){$\approx$}n{$\delta$}, with \${\textbackslash}gamma=(5-{\textbackslash}sqrt\{17\})/4=0.219{\textbackslash}ldots\$ and \${\textbackslash}delta=({\textbackslash}sqrt\{17\}+1)/4=1.28{\textbackslash}ldots\$. The value of {$\delta$} comes from a singular sixth order ODE, while the relation {$\gamma$}+{$\delta$}=3/2 corresponds to the fact that the scaling limit of the natural height function in the model is the additive Brownian motion, whose level sets have Hausdorff dimension 3/2. We also include many open problems, for example, on the conformal invariance of certain linear entropy models.},
  archiveprefix = {arXiv},
  keywords = {additive Brownian motion,conformal invariance,Critical exponents,Dependent percolation,dimer models,simple random walk excursions},
  file = {/home/dalami/Zotero/storage/2IW47FAW/Pete - 2008 - Corner percolation on ℤ2 and the square root of 17.pdf}
}

@misc{MukhReck,
  title={Haagerup property and group-invariant percolation},
  author={Mukherjee, Chiranjib and Recke, Konstantin},
  note={\url{https://arXiv.org/abs/2303.17429}},
  year={2023}
}

@article{duminil2020existence,
  title={Existence of phase transition for percolation using the {G}aussian free field},
  author={Duminil-Copin, Hugo and Goswami, Subhajit and Raoufi, Aran and Severo, Franco and Yadin, Ariel},
  journal={Duke Mathematical Journal},
  volume={169},
  number={18},
  pages={3539--3563},
  year={2020},
  publisher={Duke University Press}
}

@article{easo2024counting,
    title={Counting minimal cutsets and $ p\_c< 1$},
    author={Easo, Philip and Severo, Franco and Tassion, Vincent},
    journal={Forum of Mathematics, Pi},
    volume={13},
    pages={e23},
    year={2025},
}
\addcontentsline{toc}{section}{References}
\end{document}